\newtheorem{thm}{Theorem}[section]
\newtheorem{cor}[thm]{Corollary}
\newtheorem{lem}[thm]{Lemma}
\newtheorem{prop}[thm]{Proposition}
\theoremstyle{mydefinition}
\theoremstyle{myremark}
\newtheorem{rem}[thm]{Remark}
\newtheorem{exa}[thm]{Example}
\def\N{\mathbb{N}}
\def\CT{\mathop{\mathrm{CT}}}
\title[Frobenius Numbers of Special Sequences]{A Combinatorial Approach to Frobenius Numbers of Some Special Sequences (Complete Version)}
\author{ Feihu Liu$^{1}$ and Guoce Xin$^{2, *}$}
\address{ $^{1, 2}$School of Mathematical Sciences,  Capital Normal University,
 Beijing 100048,  PR China}
\email{$^1$\texttt{liufeihu7476@163.com}\ \  \& $^2$\texttt{guoce\_xin@163.com}}
\date{ March 13,  2023}
\thanks{$*$ This work was partially supported by NSFC(12071311).}
\begin{document}
\maketitle

\begin{abstract}
Let $A=(a_1, a_2, ..., a_n)$ be relative prime positive integers with $a_i\geq 2$. The Frobenius number $g(A)$ is the greatest integer not belonging to the set $\big\{ \sum_{i=1}^na_ix_i\ |x_i\in \mathbb{N}\big\}$. The general Frobenius problem includes the determination of $g(A)$ and the related Sylvester number $n(A)$ and Sylvester sum $s(A)$. We present a new approach to the Frobenius problem.
Basically, we transform the problem into an easier optimization problem. If the new problem can be solved explicitly, then we will be able to obtain a formula of $g(A)$. We illustrate the idea by giving concise proof of some existing formulas and finding some interesting new formulas of $g(A), n(A), s(A)$. Moreover, we find that MacMahon's partition analysis applies to give a new way of calculating $n(A), s(A)$
by using a rational function representation of a polynomial determined by $A$.
\end{abstract}

\def\D{{\mathcal{D}}}

\noindent
\begin{small}
 \emph{Mathematic subject classification}: Primary 11D07; Secondary 05A15,  11B75.
\end{small}

\noindent
\begin{small}
\emph{Keywords}: Frobenius number; Sylvester number; Sylvester sum; MacMahon's partition analysis.
\end{small}

\section{Introduction}
Throughout the paper,  we shall always use the following standard notation: $\mathbb{Z}$ is the set of all integers,  $\mathbb{N}$ is the set of all non-negative integers,  and $\mathbb{P}$ is the set of all positive integers.

Let $A=(a_1, a_2, ..., a_n)$ be a positive integer vector. Sylvester \cite{J. J. Sylvester} defined the function $d(a_0;A)$,  called the denumerant of $a_0\in \N$ with respect to $A$,  by
$$d(a_0;A)=d(a_0;a_1, a_2, ..., a_n)=\#\Big\{ (x_1, ..., x_n)\in \mathbb{N}^n \mid \sum_{i=1}^n a_ix_i=a_0\Big\}.$$
That is,  $d(a_0;A)$ is the number of non-negative integer representations of $a_0$ by $a_1, a_2, ..., a_n$.
We say $a_0$ is \emph{representable} by $A$ if $d(a_0;A)>0$ and \emph{not representable} by $A$ if otherwise.
Denote by $\mathcal{R}=\mathcal{R}(A)$ the set of numbers representable by $A$, and by $\mathcal{NR}=\mathcal{NR}(A)$ the set of numbers not representable by  $A$.

When $\gcd(A)=1$, it is not hard to see that $\mathcal{NR}(A)$ is finite, i.e., any sufficiently large $a_0$ is representable by $A$. Then a natural question is: what is
$$g(A)=\max \mathcal{NR}(A) = \max \{a_0 \in \mathbb{N} \mid d(a_0;A)=0 \},$$
i.e., the maximum number not representable by $A$? This number is now called the Frobenius number of $A$ since it was first studied by Frobenius.
The Frobenius problem is also known as the Coin Exchange Problem. It refers to two
problems: compute the Frobenius number $g(A)$; for a given $m$,  determine if $m$ is representable by $A$,  and find a representation if possible.

The determination of $g(A)$ turns out to be a very hard problem even when $n$ is fixed.
For $n=2$,  it is well known that $g(a_1, a_2)=a_1a_2-a_1-a_2$. This case was studied as early as in 1882 by Sylvester \cite{J. J. Sylvester1}.
For $n=3$, a formula related to rational function appears in \cite{G.Denham}. Amitabha Tripathi also discussed it in \cite{A. Tripathi1}. For $n=4$, no general formula for $g(A)$ is known,  but many formulas for special cases have been determined.

Algorithmically, the determination of $g(A)$ was shown by Ramírez Alfonsín \cite{J. L. Ramrez Alfonsn} to be NP-hard under Turing reduction. Thus
an algorithm for the fixed $n$ case is valuable. A theoretical solution was given by Kannan \cite{R. Kannan}, who use the concept of covering radius
to gave a polynomial time algorithm for $g(A)$ when $n$ is fixed. But the algorithm is impractical due to its high complexity. Many algorithms have been developed
for the small $n$ case. For $n=3$, H. Greenberg provides a fast algorithm with runtime $O(\log a_1)$ in \cite{Greenberg88}. D. Einstein, D. Lichtblau, A. Strzebonski and S. Wagon \cite{D. Einstein} gave a fast (but not polynomial) algorithm for computing the Frobenius number $g(A)$ for up to $n=11$ parameters.

We start to study the Frobenius problem because of the following known result lie in the conjunction of Algebraic Combinatorics and Computational Geometry.
The denumerant $d(a_0;A)$ can be computed in polynomial time when $n$ is fixed. This problem can be solved by: i) Barvinok's algorithm for counting lattice points in a rational convex polytope, with a practical implementation by LattE in \cite{DeLoera04}; ii) the constant term method in \cite{Xin15}, where a polynomial time algorithm was described and a simple (but not polynomial time) implementation by CTEuclid. We believe that the tools developed there can also be applied to the quite related $g(A)$.

The method we are going to present arises from trying to give simple proofs of some known results. For example: Hujter \cite{M. Hujter} gave the formula $g(A)$ for $A=(a, a^2+1, a^2+a), a>2$; Dulmage and Mendelsohn \cite{A. L. Dulmage} obtained some formulas for $A=(a, a+1, a+2, a+4)$,  $A=(a, a+1, a+2, a+5)$, $A=(a, a+1, a+2, a+6)$; For $A=(a, ha+d, ha+bd, ha+b^2d, ..., ha+b^kd)$ is discussed in \cite{A. Tripathi3} by A. Tripathi. For more formulas,  see \cite{Ramrez Alfonsn}.

The case
$A=(a, a+1, ..., a+k)$ is our break point.  Brauer \cite{A. Brauer} first computed $g(A)$ for this case,  Roberts \cite{Roberts1} extended this result to numbers in arithmetic progression. For $g(A)$ and $n(A)$, Selmer \cite{E. S. Selmer} further generalized to the case  $A=(a, ha+d, ha+2d, ..., ha+kd), \ (a, d)=1$.
Our proof (of Theorem \ref{lsf}) is essentially equivalent to that in \cite{A. Tripathi2}. But our treatment is neat, and easy to generalize. This leads to a method
for Forbenius problems.

We follow some notations in \cite{D. Einstein}. By the fundamental result in Lemma  \ref{LiuXin001}, it suffices to determine $N_r=\min \{a_0 \pmod{a_1}=r : a_0\in \mathcal{R}\}$ for $0\leq r\leq a_1-1$. Now consider the case $A=(a,ha+dB)=(a,ha+db_1, ha+db_2, ..., ha+db_k)$ where $B=(b_1,b_2, ..., b_k)$. We reduce the computation of $N_{r}$ to solving a much simpler optimization problem $O_B(M)$ with parameter $M$. For some special sequences,  these optimization problems $O_B(M)$ are easy to solve,  and we may further analysis to obtain desired formulas of $g(A)$.
This approach succeeds for all the above displayed cases. It is also used to prove a conjecture about $g(A)$ for the square sequence $A=(a, a+1^2, a+2^2, ..., a+k^2)$ in \cite{Fliuxin22}. That paper also studies power sequence and prime sequence, but the resolution of $g(A)$ needs the support of Number Theory.

In some special cases, our approach can give explicit formulas of $N_r$. Such formulas can also be used to study $g(A)$ and some other statistics. For instance,
the \emph{Frobenius number} $g(A)=\max \mathcal{NR}$, the \emph{Sylvester number} $n(A)=\sum_{n\in \mathcal{NR}} 1$, the \emph{Sylvester sum}  $s(A)=\sum_{n\in \mathcal{NR}} n$, and the \emph{Sylvester power sum} $s_\mu(A)=\sum_{n\in{\mathcal{NR}}(A)}n^\mu$, which was recently introduced by Takao Komatsu. See \cite{T.Komatsu2022Arx, T.Komatsu2021, T.Komatsu2022}
and its references.

It should be pointed out that $g(A), n(A), s(A)$ and $s_\mu(A)$ are all related to $N_r$, especially to $\sum_{r=1}^{a-1}N_r^p, p\geq 1$. This article mainly focuses on $g(A), n(A)$ and $s(A)$. Basically if we obtain exact formulas for $N_r$, then we can compute their power sums. But the formulas become complicated for symbolic $a$. We find the constant term method applies if $N_r$ is a piecewise linear function about $r$. In such a situation, MacMahon's partition analysis can be used to write the generating function $f(x)=\sum_{r=0}^{a-1} x^{N_r}$ as a short sum of simple Elliott rational functions. Then the formula $(x \frac{d}{dx})^p f(x)|_{x=1}$ combined with constant term extractions can be used to give a systematical approach to the computation of $\sum_{r=1}^{a-1}N_r^p, p\geq 1$.

This paper is organized as follows. Section 1 is this introduction.
In Section 2,  we will use A. Brauer and J. E. Shockley \cite{J. E. Shockley}, E. S. Selmer \cite{E. S. Selmer} and A. Tripathi's conclusion \cite{A. Tripathi0}  to get crude formulas of $g(A), n(A), s(A)$ about $N_r$. When $A=(a, a+B)$,  we transform $N_r$ into a much simpler optimization problem $O_B(M)$.
In Section 3,  we solve the optimization problem $O_B(M)$ to obtain the $g(A), n(A), s(A)$ for some special short sequences including  $A=(a^2, a^2+1, a^2+a, a^2+a+1)$,  which is a hard instance in \cite{D. Einstein}. Meanwhile,  we will give concise proofs of many known formulas. This illustrates the wide application of our method.
In Section 4,  we use the same method to calculate some formulas $g(A), n(A), s(A)$ for long sequences,  such as $A=(a, ha+d, ha+3d, ha+5d, ..., ha+(2k+1)d)$, etc.
In Section 5, we describe how to use the constant term method to calculate $n(A)$, $s(A)$ and $s_\mu(A)$.

\section{A Reduction to a Minimization Problem}

This paper focus on the computation of $g(A), n(A)$ and $s(A)$ for some special $A$.
A. Brauer and J. E. Shockley \cite{J. E. Shockley}, E. S. Selmer \cite{E. S. Selmer}, A. Tripathi \cite{A. Tripathi0} and T. Komatsu \cite{T.Komatsu2022Arx} gave the following results respectively,  which are widely used. It is convenient to use the short hand notation
$A:=(a, B)=(a, b_1, b_2, ..., b_k)$, $\gcd (A)=1$.

\begin{lem}[\cite{J. E. Shockley}, \cite{E. S. Selmer}, \cite{A. Tripathi0}, \cite{T.Komatsu2022Arx}]\label{LiuXin001}
Let $a_0\in \mathbb{N}$, $A:=(a, B)=(a, b_1, b_2, ..., b_k)$, $\gcd (A)=1$ and
$$N_r:=N_r(a, B)=\min\{ a_0\mid a_0\equiv r\mod a, \ d(a_0;B)>0\}.$$
Then the Frobenius number, Sylvester number, Sylvester sum and Sylvester power sum are respectively:
\begin{align*}
g(A)&=g(a, B)=\max_{r\in \lbrace 0, 1, ..., a-1\rbrace}N_r -a,\\
n(A)&=n(a, B)=\frac{1}{a}\sum_{r=1}^{a-1}N_r-\frac{a-1}{2},\\
s(A)&=s(a, B)=\frac{1}{2a}\sum_{r=1}^{a-1}N_r^2
-\frac{1}{2}\sum_{r=1}^{a-1}N_r+\frac{a^2-1}{12},\\
s_{\mu}(A)&=s_{\mu}(a,B)=\frac{1}{\mu+1}\sum_{\kappa=0}^{\mu}\binom{\mu+1}{\kappa}\mathcal{B}_{\kappa}a^{\kappa-1}
\sum_{r=1}^{a-1}N_r^{\mu+1-\kappa}
+\frac{\mathcal{B}_{\mu+1}}{\mu+1}(a^{\mu+1}-1),
\end{align*}
where $\mu$ is a positive integer, $\mathcal{B}_{\kappa}$ is the Bernoulli number.
\end{lem}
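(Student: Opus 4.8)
The plan is to base all four identities on a single structural observation, and then read off each statistic by summing a power of $m$ over suitable arithmetic progressions. First I would prove the key claim: a non-negative integer $m$ is representable by $A=(a,B)$ if and only if $m\ge N_r$, where $r\equiv m\pmod a$. Indeed, if $m=ax_0+m'$ with $m'$ representable by $B$, then $m'\equiv r\pmod a$ forces $m'\ge N_r$, hence $m\ge N_r$; conversely $N_r$ is representable by $B$ and $m-N_r$ is a nonnegative multiple of $a$, so $m=N_r+a t$ is representable by $A$. Consequently, within the residue class $r$ the representable numbers are exactly $N_r,N_r+a,N_r+2a,\dots$, and the non-representable numbers are exactly $r,r+a,\dots,N_r-a$, an arithmetic progression with common difference $a$ and length $(N_r-r)/a$ (using $N_r\equiv r\pmod a$). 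Since $0$ is representable by $B$, we have $N_0=0$, so the class $r=0$ contributes nothing.

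With this in hand, $g$ and $n$ are immediate. The largest non-representable number in class $r$ is $N_r-a$, so $g(A)=\max_{r}(N_r-a)=\max_{0\le r\le a-1}N_r-a$. Summing the lengths of the progressions gives $n(A)=\sum_{r=0}^{a-1}(N_r-r)/a=\frac1a\sum_{r=1}^{a-1}N_r-\frac1a\cdot\frac{a(a-1)}2$, which is the stated formula after using $N_0=0$ and $\sum_{r=0}^{a-1}r=a(a-1)/2$.

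For $s$ and $s_\mu$ I would sum $\sum_{j=0}^{m_r-1}(r+ja)^\mu$ over $r$, where $m_r=(N_r-r)/a$. For $s$ (the case $\mu=1$) the inner sum is the elementary $m_r r+a\binom{m_r}{2}$, which after substituting $m_r a=N_r-r$ collapses to $(N_r^2-r^2)/(2a)-(N_r-r)/2$; summing over $r$ and invoking $\sum r^2=\frac{(a-1)a(2a-1)}6$ produces $\frac1{2a}\sum N_r^2-\frac12\sum N_r+\frac{a^2-1}{12}$. For general $\mu$ I would invoke Faulhaber's identity in Bernoulli-polynomial form, $\sum_{j=0}^{m-1}(x+j)^\mu=\frac{1}{\mu+1}\big(\mathcal{B}_{\mu+1}(x+m)-\mathcal{B}_{\mu+1}(x)\big)$, applied with $x=r/a$ and scaled by $a^\mu$; since $r/a+m_r=N_r/a$ this yields $\sum_{j}(r+ja)^\mu=\frac{a^\mu}{\mu+1}\big(\mathcal{B}_{\mu+1}(N_r/a)-\mathcal{B}_{\mu+1}(r/a)\big)$. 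Expanding $\mathcal{B}_{\mu+1}(N_r/a)=\sum_{\kappa=0}^{\mu+1}\binom{\mu+1}{\kappa}\mathcal{B}_\kappa (N_r/a)^{\mu+1-\kappa}$ and clearing powers of $a$ gives the displayed double sum; the subtracted terms $\sum_{r=0}^{a-1}\mathcal{B}_{\mu+1}(r/a)$ are handled by the Bernoulli multiplication (Raabe) theorem $\sum_{r=0}^{a-1}\mathcal{B}_n(r/a)=a^{1-n}\mathcal{B}_n$. Splitting off the $\kappa=\mu+1$ term (which equals $\frac{\mathcal{B}_{\mu+1}}{\mu+1}a^{\mu+1}$) and combining it with $-\frac{\mathcal{B}_{\mu+1}}{\mu+1}$ from the Raabe contribution yields the closing summand $\frac{\mathcal{B}_{\mu+1}}{\mu+1}(a^{\mu+1}-1)$, while the remaining $0\le\kappa\le\mu$ terms restrict to $r\ge1$ because $N_0=0$.

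The $g$, $n$, and $s$ cases are pure arithmetic-progression bookkeeping and present no obstacle. The only delicate step is the power-sum identity: I must use the Bernoulli convention consistent with $\mathcal{B}_1=-\tfrac12$ (so that $\mathcal{B}_n=\mathcal{B}_n(0)$ and $\sum_{j=0}^{m-1}(x+j)^\mu$ has the stated closed form), and track the two boundary contributions, namely the vanishing of the $r=0$ class and the extraction of the $\kappa=\mu+1$ term, without which the normalization of the final constant $\frac{\mathcal{B}_{\mu+1}}{\mu+1}(a^{\mu+1}-1)$ comes out wrong. A quick sanity check at $\mu=1$, which must reproduce the $s(A)$ formula, is the fastest way to confirm these normalizations.
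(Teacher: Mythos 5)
Your proof is correct. Note that the paper itself offers no proof of this lemma: it is quoted (Lemma~\ref{LiuXin001}) directly from the cited sources, so there is no internal argument to compare against. What you have reconstructed is essentially the classical line of those sources. Your key claim --- that the representable numbers in the residue class $r \bmod a$ are exactly $N_r, N_r+a, N_r+2a,\dots$ --- is precisely the Brauer--Shockley observation (equivalently, the description of $\{N_0,\dots,N_{a-1}\}$ as the Ap\'ery set of the semigroup with respect to $a$), from which $g(A)$ and $n(A)$ follow by the bookkeeping you describe; your $s(A)$ computation matches Selmer/Tripathi, and your $s_\mu(A)$ derivation via Faulhaber in Bernoulli-polynomial form,
$$\sum_{j=0}^{m_r-1}(r+ja)^{\mu}=\frac{a^{\mu}}{\mu+1}\Bigl(\mathcal{B}_{\mu+1}(N_r/a)-\mathcal{B}_{\mu+1}(r/a)\Bigr),$$
combined with Raabe's multiplication theorem, is exactly Komatsu's route; I verified that your normalizations are right (the convention $\mathcal{B}_1=-\tfrac12$, the $\kappa=\mu+1$ term producing $\frac{\mathcal{B}_{\mu+1}}{\mu+1}a^{\mu+1}$, the Raabe contribution producing $-\frac{\mathcal{B}_{\mu+1}}{\mu+1}$, and the specialization $\mu=1$ reproducing $s(A)$). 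The only point you pass over silently is that each $N_r$ is finite: this needs $\gcd(A)=1$, since then $\gcd(a,\gcd(B))=1$, so every residue class mod $a$ contains (large) multiples of $\gcd(B)$, which are representable by $B$; it is worth one sentence, as it is exactly where the hypothesis $\gcd(A)=1$ enters.
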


Note that $N_0=0$ for all $A$,  so we need only consider $N_r$ for $1\leq r\leq a-1$. Also note that $N_r=N_{r+ka}$,  so in the formula,  we can let $r$ ranges over any set whose remainders are $\{0, 1, \dots,  a-1\}$. More precisely,  we will use the following result.
\begin{prop}\label{0201}
Let $A=(a, b_1, ..., b_k)$, $\gcd(A)=1$, $\gcd(a, d)=1, \ d\in \mathbb{P}$. Then we have
\begin{equation}
\{N_0, N_1, N_2,..., N_{a-1}\}=\{N_{d\cdot 0}, N_{d\cdot 1}, N_{d\cdot 2},..., N_{d\cdot (a-1)}\}.\label{0402}
\end{equation}
\end{prop}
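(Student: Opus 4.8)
The plan is to exploit two elementary facts: first, that $N_r$ depends only on the residue class of $r$ modulo $a$; and second, that multiplication by $d$ permutes the residues modulo $a$ whenever $\gcd(a,d)=1$. Together these force the two displayed sets to coincide.

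First I would record the periodicity $N_r = N_{r+a}$, which is already noted in the text immediately before the statement. This is immediate from the definition $N_r = \min\{a_0 \mid a_0 \equiv r \pmod a,\ d(a_0;B)>0\}$: the congruence condition $a_0 \equiv r \pmod a$ depends only on $r \bmod a$, so $N_r = N_{r'}$ whenever $r \equiv r' \pmod a$. In particular, for each $r$ the symbol $N_{d\cdot r}$ is to be read as $N_{(d\cdot r)\bmod a}$. Next I would invoke that, since $\gcd(a,d)=1$, the element $d$ is a unit in $\mathbb{Z}/a\mathbb{Z}$, so the map $r \mapsto d\cdot r \bmod a$ is a bijection of $\{0,1,\ldots,a-1\}$ onto itself. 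Hence the list $d\cdot 0, d\cdot 1, \ldots, d\cdot(a-1)$, reduced modulo $a$, is exactly a rearrangement of $0,1,\ldots,a-1$.

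Combining the two observations then yields
\[
\{N_{d\cdot 0}, N_{d\cdot 1}, \ldots, N_{d\cdot(a-1)}\}
= \{N_{(d\cdot 0)\bmod a}, \ldots, N_{(d\cdot(a-1))\bmod a}\}
= \{N_0, N_1, \ldots, N_{a-1}\},
\]
which is precisely \eqref{0402}.

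There is no serious obstacle here: the whole content is the interplay between the periodicity of $N_r$ and the invertibility of $d$ modulo $a$. The only point requiring minor care is the indexing convention, namely that the subscripts $d\cdot r$ must be interpreted modulo $a$ through the periodicity $N_r=N_{r+a}$, rather than as literal indices lying in $\{0,\ldots,a-1\}$. Once this is made explicit, the equality of the two sets is immediate, and no property of $B$ beyond $\gcd(A)=1$ is used.
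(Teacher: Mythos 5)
Your proof is correct and follows essentially the same route as the paper's one-line argument, namely that multiplication by $d$ permutes the residue classes modulo $a$ when $\gcd(a,d)=1$; you merely make explicit the periodicity $N_r=N_{r+a}$ and the indexing convention, which the paper leaves implicit.
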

\begin{proof}
When $r$ takes one of the remaining classes of $a$,  so does $dr$.
\end{proof}

We study the case $A=(a, ha+dB)=(a, ha+db_1, ..., ha+db_k)$. We find
$g(a, ha+dB)$ is closely related to a much simpler minimization problem defined by:
$$O_B(M):=\min\Big\{\sum_{i=1}^kx_i \mid \sum_{i=1}^k b_ix_i=M, \ x_i\in\mathbb{N}, 1\leq i\leq k\Big\}.$$

For the sake of convenience,  in what follows we shall always assume $x_i\in\mathbb{N}, 1\leq i\leq k$ unless specified otherwise.
\begin{lem}\label{0202}
Suppose $A=(a, ha+db_1, ..., ha+db_k)$,  $k, h, d\in\mathbb{P}$ and $\gcd(A)=1$, $m\in\mathbb{N}$,  $\gcd(a, d)=1$. For a given $ 0\leq r\leq a-1$,  we have
\begin{equation}\label{0203}
N_{dr}=\min \{O_B(ma+r) \cdot ha+(ma+r)d \mid m\in \mathbb{N}\}.
\end{equation}
\end{lem}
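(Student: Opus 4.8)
The plan is to unfold the definition of $N_{dr}$ and show that every admissible representation reduces to a choice of ``weight'' $M=\sum_i b_ix_i$ together with a representation of $M$ by $B$; the congruence constraint then pins down $M$ modulo $a$, and the optimization over representations collapses to $O_B$. Concretely, $N_{dr}=\min\{a_0\mid a_0\equiv dr\pmod a,\ d(a_0;ha+db_1,\dots,ha+db_k)>0\}$, and any witness of $d(a_0;\cdot)>0$ is a tuple $(x_1,\dots,x_k)\in\mathbb{N}^k$ with $\sum_{i=1}^k(ha+db_i)x_i=a_0$. Setting $X:=\sum_i x_i$ and $M:=\sum_i b_ix_i$, this factors as
\[
a_0=ha\cdot X+d\cdot M,
\]
so that $a_0$ is completely determined by the pair $(X,M)$: the common part $ha$ contributes only through the total number of summands $X$, while the spread is carried by $M$.

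Next I would impose the residue condition. Since $ha\cdot X\equiv 0\pmod a$, we have $a_0\equiv dM\pmod a$, so the requirement $a_0\equiv dr\pmod a$ becomes $dM\equiv dr\pmod a$; because $\gcd(a,d)=1$ we may cancel $d$ to obtain $M\equiv r\pmod a$. As $M\geq 0$ and $0\leq r\leq a-1$, this forces $M=ma+r$ for some $m\in\mathbb{N}$. Hence the admissible $a_0$ are exactly the numbers $ha\cdot X+d(ma+r)$, where $m\in\mathbb{N}$ and $X=\sum_i x_i$ ranges over the representations $\sum_i b_ix_i=ma+r$ of $ma+r$ by $B$.

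Finally I would carry out the minimization in two stages. For each \emph{fixed} $m$, equivalently fixed $M=ma+r$, the term $d(ma+r)$ is constant, so minimizing $a_0$ amounts to minimizing $X=\sum_i x_i$ over all representations of $ma+r$ by $B$, and that minimum is by definition $O_B(ma+r)$ (adopting the convention $O_B=+\infty$ when $ma+r$ is not representable, so that it contributes nothing). This produces the inner value $O_B(ma+r)\cdot ha+(ma+r)d$, and taking the outer minimum over $m\in\mathbb{N}$ yields \eqref{0203}. Both inequalities fall out of this correspondence: any admissible $a_0$ dominates the inner value attached to its own $m$, and every inner value is realized by an actual representation.

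I expect the genuine content to be bookkeeping rather than difficulty. The one point to treat carefully is that the passage from a tuple $(x_i)$ to the pair $(X,M)$ is usable in both directions, namely that every $M=ma+r$ representable by $B$ lifts back to a legitimate $a_0$ representable by $(ha+db_1,\dots,ha+db_k)$ with the prescribed residue. One must also confirm that the $+\infty$ convention is compatible with a well-defined minimum, i.e.\ that the right-hand side ranges over a nonempty set; this is guaranteed since $\gcd(A)=1$ forces all sufficiently large residues to be representable by $B$, so some $ma+r$ is representable and the minimum is finite and attained.
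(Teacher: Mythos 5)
Your proposal is correct and follows essentially the same route as the paper's proof: unfold the definition of $N_{dr}$, split each representation as $a_0 = ha\cdot\sum_i x_i + d\cdot\sum_i b_ix_i$, use $\gcd(a,d)=1$ to reduce the congruence to $\sum_i b_ix_i = ma+r$, and minimize in two stages with the inner minimum being $O_B(ma+r)$. Your added care about the two-way correspondence and the nonemptiness of the minimization (where the precise point is that $\gcd(A)=1$ forces $\gcd(a,\gcd(B))=1$, so some term $ma+r$ is representable by $B$ even when $\gcd(B)>1$) is a welcome bit of rigor that the paper leaves implicit, but it does not change the argument.
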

\begin{proof}
We have the following equations
\begin{align*}
N_{dr}&=\min\lbrace a_0\mid a_0\equiv dr\mod a;\ d(a_0;ha+db_1, ..., ha+db_k)>0\rbrace
\\&=\min\Big\{\sum_{i=1}^k(ha+db_i)x_i\mid \sum_{i=1}^k(ha+db_i)x_i\equiv dr\mod a, \ x_i\in\mathbb{N}, 1\leq i\leq k \Big\}
\\&=\min\Big\{\big(\sum_{i=1}^kx_i\big)\cdot ha+d\cdot\sum_{i=1}^kb_ix_i\mid d\sum_{i=1}^kb_ix_i\equiv dr\mod a, \ x_i\in\mathbb{N}, 1\leq i\leq k \Big\}
\\&=\min\Big\{\big(\sum_{i=1}^kx_i\big)\cdot ha+d\cdot\sum_{i=1}^kb_ix_i\mid \sum_{i=1}^kb_ix_i\equiv r\mod \frac{a}{(a, d)}, \ x_i\in\mathbb{N}, 1\leq i\leq k \Big\}
\\&=\min\Big\{\big(\sum_{i=1}^kx_i\big)\cdot ha+d(ma+r)\mid \sum_{i=1}^kb_ix_i=ma+r, \ m, x_i\in\mathbb{N}, 1\leq i\leq k \Big\}.
\end{align*}
Now for fixed $m$,  and hence fixed $M=ma+r$,
$\sum_{i=1}^kx_i$ is minimized to $O_B(ma+r)$. This completes the proof.
\end{proof}

Note that the argument in the proof also works for $h=0$. This leads to a simple proof of the following result.
\begin{thm}[\cite{J. E. Shockley}]\label{adb12kg}
Let $A=(a, dB)=(a, db_1, db_2, ..., db_k)$,  with $d\in \mathbb{P}$ and $\gcd(A)=1$. Then:
\begin{equation}
g(a, db_1, ..., db_k)=dg(a, b_1, ..., b_k)+(d-1)a. \label{0401}
\end{equation}
\end{thm}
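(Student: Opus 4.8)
The plan is to exploit Lemma~\ref{0202} in the degenerate case $h=0$, exactly as the remark preceding the theorem suggests, to extract a clean multiplicative relation between the minimal representatives $N_r$ of the two sequences, and then to feed this relation into the Frobenius formula of Lemma~\ref{LiuXin001}.

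First I would specialize Lemma~\ref{0202} (whose proof is noted to remain valid when $h=0$) to $A=(a,dB)=(a,db_1,\dots,db_k)$. Setting $h=0$ in \eqref{0203} annihilates the $O_B(ma+r)\cdot ha$ term, leaving
\begin{equation*}
N_{dr}(a,dB)=\min\{(ma+r)d \mid m\in\mathbb{N},\ ma+r\in\mathcal{R}(B)\}=d\cdot N_r(a,B),
\end{equation*}
where the last equality holds because $N_r(a,B)=\min\{ma+r\mid m\in\mathbb{N},\ ma+r\in\mathcal{R}(B)\}$ directly from the definition of $N_r$ in Lemma~\ref{LiuXin001}. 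Thus scaling $B$ by $d$ simply scales each $N$ value by $d$, provided we pair the residue $r$ on the right with the residue $dr$ on the left.

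The next step is to undo this index twist using Proposition~\ref{0201}. Since $\gcd(a,d)=1$, multiplication by $d$ permutes $\{0,1,\dots,a-1\}$ modulo $a$, so the multiset $\{N_{dr}(a,dB)\}_{r=0}^{a-1}$ coincides with $\{N_r(a,dB)\}_{r=0}^{a-1}$; in particular their maxima agree. Combining this with the displayed identity gives
\begin{equation*}
\max_{0\le r\le a-1} N_r(a,dB)=\max_{0\le r\le a-1} N_{dr}(a,dB)=d\cdot\max_{0\le r\le a-1} N_r(a,B).
\end{equation*}

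Finally I would apply the Frobenius formula $g(A)=\max_r N_r(A)-a$ from Lemma~\ref{LiuXin001} to both sequences. Writing $\max_r N_r(a,B)=g(a,B)+a$ and substituting yields
\begin{equation*}
g(a,dB)=d\bigl(g(a,B)+a\bigr)-a=d\,g(a,B)+(d-1)a,
\end{equation*}
which is precisely \eqref{0401}. There is no genuine obstacle here: the only point demanding care is the bookkeeping of residues—making sure the $h=0$ specialization delivers $N_{dr}$ (rather than $N_r$) on the left, and then invoking the coprimality of $a$ and $d$ through Proposition~\ref{0201} to realign indices before passing to the maximum.
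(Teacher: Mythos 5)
Your proposal is correct and takes essentially the same approach as the paper: both rest on the scaling identity $N_{dr}(a,dB)=d\,N_r(a,B)$, followed by the residue realignment (Proposition~\ref{0201}, used implicitly in the paper) and the formula $g(A)=\max_r N_r - a$ from Lemma~\ref{LiuXin001}. The only cosmetic difference is that the paper derives the identity by factoring $d$ directly out of the defining minimization for $N_{dr}(a,dB)$, whereas you obtain it via the $h=0$ specialization of Lemma~\ref{0202} --- precisely the route the remark preceding the theorem advertises.
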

\begin{proof}
By the definition of $N_{dr}$,  for a given $r$,  we have
\begin{align*}
N_{dr}(a, dB)&=\min \Big\{ \sum_{i=1}^kx_i(db_i)\mid \sum_{i=1}^kx_i(db_i)\equiv dr \mod a,  x_i\in\mathbb{N}, 1\leq i\leq k\Big\}
\\&=d\cdot \min \Big\{\sum_{i=1}^kx_ib_i\mid \sum_{i=1}^kx_ib_i\equiv r \mod a, x_i\in\mathbb{N}, 1\leq i\leq k\Big\}
\\&=d\cdot N_{r}(a, B).
\end{align*}
It follows that
\begin{align*}
g(a, dB)+a =\mathop{\max}\limits_{r\in \lbrace 0, 1, ..., a-1\rbrace} \lbrace N_{dr}(a, dB)\rbrace = d\mathop{\max}\limits_{r\in \lbrace 0, 1, ..., a-1\rbrace} \lbrace N_{r}(a, B)\rbrace =d (g(a, B)+a).
\end{align*}
The theorem then follows.
\end{proof}
{\it Note}: This proof is quite simple,  compared with existing proofs \cite{Ramrez Alfonsn}. Moreover,  we do not require $d=\gcd(db_1, db_2, ..., db_k)$.

For the convenience of the following discussion and by Lemma \ref{0202},  we can define an intermediate function with respect to $m$,  namely:
\begin{equation}\label{e-hd-11}
N_{dr}(m) := ha\cdot O_B(ma+r) +(ma+r)d,  \quad \text{where} \ A=(a, ha+dB).
\end{equation}

Lemma \ref{0202} suggests the following strategy for $N_{dr}$ where
$A=(a, ha+dB)$: we first try to solve $O_B(M)$ for general $M$. If we have a formula that is nice enough,  then we can analyze $N_{dr}(m)$. In fact, if $N_{dr}(m)$ increases with $m$, then we have a formula for $N_{dr}=N_{dr}(0)$. Hence we can further obtain the formula of $g(A)$, $n(A)$, and $s(A)$.

This strategy succeeds in many situations, as we shall illustrate in the next two sections.
We need one more easy fact.
\begin{prop}
If $\frac{a}{b}>0$,  then $f(x)=\frac{ax}{b}$,  $g(x)=\lfloor\frac{ax}{b}\rfloor$ and $h(x)=\lceil\frac{ax}{b}\rceil$ are increasing with respect to $x$.
\end{prop}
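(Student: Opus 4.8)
The plan is to treat the linear function $f$ directly and then bootstrap the results for $g$ and $h$ from the order-preserving property of the floor and ceiling maps. First I would unpack the hypothesis $\frac{a}{b}>0$: it says precisely that the slope $\frac{a}{b}$ is a positive real number. Hence for any $x_1<x_2$ we have
\[
f(x_2)-f(x_1)=\frac{a}{b}(x_2-x_1)>0,
\]
since a positive number times the positive quantity $x_2-x_1$ is positive. Thus $f$ is strictly increasing, and this is the only genuine computation required.

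Next I would invoke the elementary fact that both $\lfloor\cdot\rfloor$ and $\lceil\cdot\rceil$ are order-preserving maps from $\mathbb{R}$ to $\mathbb{Z}$: if $u\le v$ then $\lfloor u\rfloor\le\lfloor v\rfloor$ and $\lceil u\rceil\le\lceil v\rceil$. This is immediate from the definitions, as $\lfloor u\rfloor$ is the largest integer not exceeding $u$ and $\lceil u\rceil$ the smallest integer at least $u$. Composing these order-preserving maps with the strictly increasing $f$ then settles $g$ and $h$ at once: whenever $x_1<x_2$ we have $f(x_1)<f(x_2)$, and therefore $\lfloor f(x_1)\rfloor\le\lfloor f(x_2)\rfloor$ as well as $\lceil f(x_1)\rceil\le\lceil f(x_2)\rceil$, so both $g=\lfloor f\rfloor$ and $h=\lceil f\rceil$ are increasing.

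There is essentially no obstacle here; the statement is a routine monotonicity fact, and the entire argument reduces to the sign computation above together with the definitions of $\lfloor\cdot\rfloor$ and $\lceil\cdot\rceil$. The only point worth flagging is interpretive rather than technical: because $g$ and $h$ are piecewise constant, ``increasing'' must be read in the weak (non-decreasing) sense for these two functions, whereas $f$ is strictly increasing. This is exactly the form in which the proposition will be applied, where one only needs that quantities such as $N_{dr}(m)$ do not decrease as their argument grows.
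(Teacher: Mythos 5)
Your proof is correct; the paper states this proposition without any proof at all (it is introduced merely as ``one more easy fact''), and your argument --- positivity of the slope $\frac{a}{b}$ for the strict monotonicity of $f$, followed by the order-preserving property of $\lfloor\cdot\rfloor$ and $\lceil\cdot\rceil$ for $g$ and $h$ --- is exactly the standard verification the authors left implicit. Your closing remark that ``increasing'' must be read in the weak sense for $g$ and $h$ is also the right reading, since the paper applies the proposition only to conclude that quantities like $N_{dr}(m)$ do not decrease.
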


\section{Frobenius Formula for Some Short Sequences}
This section is divided into four subsections. In each subsection, we use our scheme to find the formulas of $g(A), n(A), s(A)$ where $A=(a,ha+dB)$ for
one typical example. The first two are for $A=(a, ha+d, ha+jd)$ and $A=(a, ha+d, ha+2d, ha+jd)$.
The corresponding $O_B(M)$ can be solved by the greedy algorithm, and the situation becomes easy when $N_{dr}=N_{dr}(0)$.
The third is for a generalization of $A=(a^2, a^2+1, a^2+a, a^2+(a+1))$. This $A$ was considered as a hard instance
from another view. The fourth is for $A=(a,ha-d,ha+d) $, where we need to solve $O_B(M)$ for integer $M$ (allowed to be negative).

\subsection{The Case of $A=(a,ha+d,ha+jd)$}
There is a large class of $B$ so that $O_B(M)$
can be achieved by the greedy algorithm.
Even in these cases $g(A)$ might be too complicated to solve.
Our scheme works for many instances.

In this subsection, we consider the case $A=(a, ha+d, ha+jd)$ which is already complicated,  though the following minimization problem is easy.
\begin{lem}
Suppose $B=(1, j)$,  $j>2$. If $M= sj+r_1$, $0\le r_1\le j-1$,  then we have
$$O_B(M)=r_1+s=M-(j-1)\lfloor M/j \rfloor,  $$
with the minimum achieved at $(x_1, x_2)=(r_1, s)$.
\end{lem}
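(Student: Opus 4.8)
The plan is to reduce the two-variable minimization to a one-variable problem by eliminating $x_1$, and then to exploit monotonicity. Since $B=(1,j)$, a feasible point is a pair $(x_1,x_2)\in\N^2$ satisfying the single constraint $x_1+jx_2=M$. First I would solve this constraint for $x_1$, writing $x_1=M-jx_2$. The requirement $x_1\in\N$ then becomes $0\le x_2\le M/j$, so that the admissible values of $x_2$ are exactly the integers $0,1,\dots,\lfloor M/j\rfloor$; in particular $s=\lfloor M/j\rfloor$ is the largest feasible choice.

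Next I would rewrite the objective in terms of the single free variable $x_2$. Substituting $x_1=M-jx_2$ gives $x_1+x_2=M-(j-1)x_2$. Because $j>2$ we have $j-1>0$, so this quantity is strictly decreasing in $x_2$. Hence the minimum over the feasible range is attained by taking $x_2$ as large as possible, namely $x_2=\lfloor M/j\rfloor=s$. With $M=sj+r_1$ and $0\le r_1\le j-1$ we then read off $x_1=M-js=r_1$, so the optimizing pair is $(x_1,x_2)=(r_1,s)$ and the minimum value is $r_1+s=M-(j-1)s=M-(j-1)\lfloor M/j\rfloor$, exactly as claimed.

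The argument is entirely elementary, so there is no genuinely hard step; the only points requiring care are verifying that the feasible set for $x_2$ is correctly described by the floor function and checking that $j-1>0$ is what forces the objective to be monotone. This last observation is the precise place where the hypothesis $j>2$ (indeed $j>1$ would suffice) enters, and it is just the statement that the greedy strategy of using the larger part $j$ as often as possible is optimal for $B=(1,j)$ — which is what makes this instance of $O_B(M)$ explicitly solvable and prepares the ground for analyzing $N_{dr}(m)$ via \eqref{e-hd-11}.
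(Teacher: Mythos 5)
Your proof is correct: eliminating $x_1$ via $x_1=M-jx_2$, observing that the feasible values of $x_2$ are exactly $0,1,\dots,\lfloor M/j\rfloor$, and noting that the objective $M-(j-1)x_2$ is strictly decreasing in $x_2$ (so the greedy choice $x_2=\lfloor M/j\rfloor=s$ is optimal) is precisely the justification the paper leaves implicit, since the lemma is stated there without proof as an evident instance where the greedy algorithm solves $O_B(M)$. Your side remark is also accurate that $j>1$ already suffices for the monotonicity step; the stronger hypothesis $j>2$ is imposed only because elsewhere the paper needs the entries of $A=(a,ha+d,ha+jd)$ to avoid degenerating into the arithmetic-progression case $j=2$.
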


We start by proving the following result.
\begin{thm}[\cite{Roberts1}]\label{111}
Let $A=(a, a+1, a+j)$,  $a, j>2$. Then
$$\begin{aligned}
  g(A)=
\left\{
    \begin{array}{lc}
    \frac{a(a+1)}{j}+(j-3)a-1 & \text{if}\ \ a\equiv -1 \mod j;\ \ a\geq j^2-5j+3,\ \\
    \lfloor \frac{a+1}{j}\rfloor (a+j)+(j-3)a-1 & \text{if}\ \ a\not\equiv -1 \mod j;\ \ a\geq j^2-4j+2. \\
    \end{array}
\right.
\end{aligned}$$
\end{thm}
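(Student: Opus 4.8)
The plan is to specialize the reduction of Lemma \ref{0202} to $h=d=1$ and $B=(1,j)$, for which the preceding lemma evaluates the optimization problem explicitly as $O_B(M)=M-(j-1)\lfloor M/j\rfloor$. The intermediate function of \eqref{e-hd-11} then becomes
$$N_r(m)=a\,O_B(ma+r)+(ma+r)=(a+1)(ma+r)-a(j-1)\left\lfloor\tfrac{ma+r}{j}\right\rfloor.$$
By Lemma \ref{0202}, $N_r=\min_{m\in\N}N_r(m)$, and by Lemma \ref{LiuXin001} it suffices to compute $\max_{0\le r\le a-1}N_r$ and subtract $a$. Thus the proof splits into two tasks: (i) show that under the stated lower bounds on $a$ the minimum defining $N_r$ is attained at $m=0$, i.e. $N_r=N_r(0)$; and (ii) maximize the now-explicit function $N_r(0)=(a+1)r-a(j-1)\lfloor r/j\rfloor$ over $0\le r\le a-1$.

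For task (i) I would prove $N_r(0)\le N_r(m)$ for all $m\ge 1$. Writing $a=qj+t$ with $0\le t\le j-1$ and $\rho=r\bmod j$, a direct floor computation yields
$$N_r(m)-N_r(0)=a\Big(m(q+t+1)-(j-1)\big\lfloor\tfrac{mt+\rho}{j}\big\rfloor\Big).$$
The binding step is $m=1$, where $\lfloor(t+\rho)/j\rfloor\le 1$, so nonnegativity reduces to $q+t+2\ge j$; one checks that the estimate for larger $m$ is no more restrictive once this holds. Translating $q+t+2\ge j$ back through $a=qj+t$ for each residue $t$ is exactly what the hypotheses $a\ge j^2-4j+2$ (when $a\not\equiv-1\bmod j$) and $a\ge j^2-5j+3$ (when $a\equiv-1\bmod j$) supply. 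I expect this monotonicity verification, carried out over the residues $t$ and over all $m$, to be the main obstacle, since it is where the precise numerical bounds are forced; the case $a\equiv -1\bmod j$ in particular should be treated with care, as there the sign of the increment is governed by a ceiling term rather than by the crude $m=1$ estimate.

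For task (ii), on each block $\ell j\le r\le \ell j+(j-1)$ the function $N_r(0)$ is linear and increasing in $r$ (slope $a+1>0$), so its local maxima sit at the right endpoints $r=\ell j+(j-1)$, where
$$N_{\ell j+j-1}(0)=(a+1)(j-1)+\ell(a+j)$$
increases with $\ell$. Hence among the complete blocks inside $[0,a-1]$ the largest value occurs at $\ell=\lfloor a/j\rfloor-1$, i.e. $r=\lfloor a/j\rfloor\,j-1$, and this must be weighed against the global endpoint $r=a-1$, which lies in the incomplete final block precisely when $a\not\equiv 0\bmod j$. Computing the difference $N_{a-1}(0)-N_{\lfloor a/j\rfloor j-1}(0)$ shows it is strictly positive exactly when $a\equiv -1\bmod j$ (so the endpoint wins) and is nonpositive otherwise (so the block end wins). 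This dichotomy produces precisely the two branches of the claimed formula.

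Finally I would substitute the winning $r$ into $g(A)=\max_r N_r-a=N_{r^*}(0)-a$. When $a\equiv -1\bmod j$ one has $r^*=a-1$, $\lfloor(a-1)/j\rfloor=\lfloor a/j\rfloor$, and $j\mid (a+1)$, and simplification gives $\frac{a(a+1)}{j}+(j-3)a-1$; otherwise $r^*=\lfloor a/j\rfloor\,j-1$ with $\lfloor a/j\rfloor=\lfloor(a+1)/j\rfloor$, giving $\lfloor\frac{a+1}{j}\rfloor(a+j)+(j-3)a-1$. The algebra in this closing step is routine; the substance of the argument lies in the monotonicity of task (i) and the endpoint comparison of task (ii).
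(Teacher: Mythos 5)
Your skeleton is the same as the paper's: the paper proves the generalization (Theorem \ref{222}, for $A=(a,ha+d,ha+jd)$ under the condition $hk+d-ht\ge 0$ in the parametrization $a=kj-t$) by exactly your two tasks --- monotonicity of $N_{dr}(m)$ in $m$ via an increment estimate, then maximization of $N_{dr}(0)$ over the two candidates $r=a-1$ and $r=\lfloor a/j\rfloor j-1$ --- and then specializes $h=d=1$ to get Theorem \ref{111}. Your exact identity
$$N_r(m)-N_r(0)=a\Big(m(q+t+1)-(j-1)\Big\lfloor\frac{mt+\rho}{j}\Big\rfloor\Big)$$
is correct, your endpoint comparison $N_{a-1}(0)-N_{\lfloor a/j\rfloor j-1}(0)=(a+1)t-a(j-1)$ yields the right dichotomy (positive exactly for $t=j-1$, i.e.\ $a\equiv-1\bmod j$), and the closing algebra checks out.

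However, task (i) contains a genuine misstep: the claim that translating $q+t+2\ge j$ through $a=qj+t$ ``for each residue $t$ is exactly what the hypotheses supply'' is false for $t=0$. There the requirement $q+t+2\ge j$ reads $a\ge j(j-2)$, while the hypothesis only gives $a\ge j^2-4j+2$; for instance $a=j(j-3)$ with $j\ge 5$ satisfies $a\ge j^2-4j+2$ and $a\not\equiv-1\bmod j$ but violates $q+2\ge j$, so your verification stalls at this residue if run literally. The repair sits inside your own formula: for $t=0$ the floor term is $\lfloor\rho/j\rfloor=0$ identically, so $N_r(m)-N_r(0)=am(q+1)\ge0$ with no condition at all; the condition $q+t+2\ge j$ is needed only for $1\le t\le j-2$, where (by your bound $\lfloor(mt+\rho)/j\rfloor\le m$) it is then sufficient for every $m$, and where the hypothesis $a\ge j^2-4j+2$ does deliver it, tightly at $t=2$ since $j^2-4j+2\equiv 2\bmod j$. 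Your cautionary remark about $a\equiv-1\bmod j$ is aimed at the wrong place: there $t=j-1$ gives $q+t+2=q+j+1\ge j$ automatically, so monotonicity is unconditional and the hypothesis $a\ge j^2-5j+3$ plays no role in the argument --- which matches the paper's observation that Theorem \ref{222}, whose condition is vacuous precisely when $a\equiv 0$ or $-1\bmod j$, is sharper than Theorem \ref{111}. With the residue-$0$ case handled separately as above, your proof is complete and is essentially the paper's argument specialized to $h=d=1$, with your exact difference identity replacing the paper's cruder bound via $\lceil a/j\rceil$.
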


\begin{rem}
 We first see this theorem (with a typo) in the book \cite{Ramrez Alfonsn},  which was cited as a result in \cite{Roberts1}. The theorem was not proved,  as the author said that the proof process is rather long.
\end{rem}

Now we can give a simple proof of a slight generalization of Theorem \ref{111} as follows.
\begin{thm}\label{222}
Let $A=(a, ha+d, ha+jd)$,  $a, j>2$,  and $a=kj-t,  k\geq 1,  0\leq t\leq j-1, gcd(a,d)=1, h\geq d$, where $a,h,d,j,k,t\in \mathbb{N}$. Then
$$\begin{aligned}
  g(A)=
\left\{
    \begin{array}{lc}
         \frac{ha^2}{j}+(j-2)ha+(a-1)d-a & \text{if}\ \ t=0;\ \ \ \ \ \ \ \ \ \ \ \ \ \ \ \ \ \ \ \ \ \ \ \ \ \ \ \ \ \ \ \ \ \ \\
        \frac{ha(a+1)}{j}+(j-3)ha+(a-1)d-a  & \text{if}\ \ t=1;\ \ \ \ \ \ \ \ \ \ \ \ \ \ \ \ \ \ \ \ \ \ \ \ \ \ \ \ \ \ \ \ \ \ \\
        \lfloor \frac{a}{j}\rfloor (ha+jd)+(j-2)ha-d-a & \text{if}\ \ 2\leq t\leq j-1;\ \ hk+d-ht\geq 0.
    \end{array}
\right.
\end{aligned}$$
\begin{small}
\begin{align*}
n(A)&=\frac{(a-1)(ha+d-1)}{2}-\frac{h(j-1)(a-t)}{2}(\frac{a+t}{j}-1),\ \ \text{if} \ \ hk+d-ht\geq 0,\\
s(A)&=\frac{(ha+d)^2(2a^2-3a+1)}{12}+\frac{h^2a(j-1)^2(k-1)}{6}(jk^2-\frac{jk}{2}-3tk+3t)
\\&-\frac{h(ha+d)(j-1)(k-1)}{6}(2j^2(k^2+\frac{k}{4})-2kj(3t+\frac{3}{4})+3t(t+1))
\\&-\frac{a(ha+d)(a-1)}{4}+\frac{ha(j-1)(a-t)}{4}(\frac{a+t}{j}-1)+\frac{a^2-1}{12},\ \ \text{if}\ \ hk+d-ht\geq 0.
\end{align*}
\end{small}
\end{thm}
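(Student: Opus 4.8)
The plan is to specialize Lemma~\ref{0202} to $B=(1,j)$, for which the preceding lemma supplies the closed form $O_B(M)=M-(j-1)\lfloor M/j\rfloor$. Substituting this into \eqref{e-hd-11} gives
\begin{equation*}
N_{dr}(m)=ha\bigl((ma+r)-(j-1)\lfloor (ma+r)/j\rfloor\bigr)+(ma+r)d .
\end{equation*}
Writing $r=jq+\rho$ with $q=\lfloor r/j\rfloor$ and $0\le\rho=r\bmod j\le j-1$, the value at $m=0$ collapses to the simple linear expression
\begin{equation*}
N_{dr}(0)=q(ha+jd)+\rho(ha+d),
\end{equation*}
which will drive every subsequent computation.

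The central step is to show that $N_{dr}(m)$ is non-decreasing in $m$, so that $N_{dr}=N_{dr}(0)$. Since $a=kj-t$, the increment $\lfloor(M+a)/j\rfloor-\lfloor M/j\rfloor$ is either $k$ or $k-1$, whence
\begin{equation*}
O_B(M+a)-O_B(M)=a-(j-1)\bigl(\lfloor(M+a)/j\rfloor-\lfloor M/j\rfloor\bigr)\ge a-(j-1)k=k-t .
\end{equation*}
Therefore $N_{dr}(m+1)-N_{dr}(m)\ge ha(k-t)+ad=a(hk+d-ht)$, which is $\ge 0$ precisely under the stated hypothesis $hk+d-ht\ge0$ (automatic for $t=0,1$ because $h,d\ge1$ and $k\ge1$). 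I regard this monotonicity as the linchpin: once it holds, $N_{dr}=N_{dr}(0)$ for every $r$ and the problem reduces to a finite optimization over $r$.

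For $g(A)$ I would use $g(A)=\max_r N_{dr}-a$, which is legitimate because $\{N_{dr}\}_r=\{N_r\}_r$ by Proposition~\ref{0201}, and then maximize $q(ha+jd)+\rho(ha+d)$ over $0\le r\le a-1$. Within a fixed block the value increases with $\rho$, while crossing a block boundary it strictly drops (as $j>2$); so the maximum is attained either at the top of the last complete block, $r=(k-1)j-1$, giving $(k-2)(ha+jd)+(j-1)(ha+d)$, or at the endpoint $r=a-1$, giving $(k-1)(ha+jd)+(j-t-1)(ha+d)$. Their difference equals $ha(1-t)+d(j-t)$: the two candidates coincide for $t=0$, the endpoint wins for $t=1$, and for $2\le t\le j-1$ the combination $h\ge d$ with $hk+d-ht\ge0$ forces the difference to be non-positive so the block-top wins. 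Choosing the larger candidate and subtracting $a$ produces the three displayed formulas; the $2\le t\le j-1$ case matches after the identity $(k-2)(ha+jd)+(j-1)(ha+d)=(k-1)(ha+jd)+(j-2)ha-d$.

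For $n(A)$ and $s(A)$ I would substitute $N_{dr}=N_{dr}(0)$ into Lemma~\ref{LiuXin001}. Because $N_{dr}$ is piecewise linear in $r$ through $q=\lfloor r/j\rfloor$ and $\rho=r\bmod j$, one has
\begin{equation*}
\sum_{r=0}^{a-1}N_{dr}=(ha+jd)\sum_{r=0}^{a-1}\Big\lfloor\tfrac{r}{j}\Big\rfloor+(ha+d)\sum_{r=0}^{a-1}(r\bmod j),
\end{equation*}
and $\sum_{r}N_{dr}^2$ expands analogously into $\sum q^2$, $\sum q\rho$, $\sum\rho^2$. Each sum is evaluated block-by-block with $a=kj-t$ and the identity $jk-2t=a-t$, after which $n(A)=\tfrac1a\sum N_{dr}-\tfrac{a-1}{2}$ and $s(A)=\tfrac1{2a}\sum N_{dr}^2-\tfrac12\sum N_{dr}+\tfrac{a^2-1}{12}$ are assembled. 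I expect the only real obstacle to be computational: the second-moment sum generates several cross terms, and condensing them into the compact closed forms stated above — while checking that the single hypothesis $hk+d-ht\ge0$ governs all three $t$-regimes at once — is lengthy though entirely routine.
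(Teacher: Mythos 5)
Your proposal is correct and takes essentially the same approach as the paper: the closed form $O_B(M)=M-(j-1)\lfloor M/j\rfloor$ for $B=(1,j)$, the monotonicity bound $N_{dr}(m+1)-N_{dr}(m)\ge a(hk+d-ht)$ forcing $N_{dr}=N_{dr}(0)$, the comparison of the same two candidate maxima (the endpoint $r=a-1$ versus the top of the preceding full block, settled for $2\le t\le j-1$ by $h\ge d$ together with $a\ge j-t$), and block-wise summation through Lemma \ref{LiuXin001} for $n(A)$ and $s(A)$. One small imprecision: at $t=0$ the two candidate \emph{values} do not coincide (their difference is $ha+jd>0$); rather, the endpoint is itself the top of the last complete block, so the two candidate \emph{positions} merge --- but since your procedure takes the larger candidate, this does not affect any of the three displayed formulas.
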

\begin{proof}
Let $B=(1, j)$.
By Lemma \ref{0202},  for a given $r$,
if $M=ma+r=s\cdot j+r_1, \ s\geq 0,  0\leq r_1< j$,  then $s=\lfloor \frac{ma+r}{j}\rfloor$,  and
\begin{align}\label{equnder0}
N_{dr}(m)&=(s+r_1)ha+(ma+r)d=(s(1-j))ha+(ma+r)(ha+d)\notag
\\&=(ma+r)(ha+d)-\big\lfloor \frac{ma+r}{j}\big\rfloor(j-1)ha.
\end{align}

Now assume $a=kj-t,  k\geq 1,  0\leq t\leq j-1, \ a, j>2$. Then we have
\begin{align*}
N_{dr}(m+1)-N_{dr}(m)&=((m+1)a+r)(ha+d)-\big\lfloor \frac{(m+1)a+r}{j}\big\rfloor(j-1)ha
\\&\ \ \ \ -(ma+r)(ha+d)+\big\lfloor \frac{ma+r}{j}\big\rfloor(j-1)ha
\\&=a(ha+d)-(j-1)ha\Big(\big\lfloor \frac{(m+1)a+r}{j}\big\rfloor-\big\lfloor \frac{ma+r}{j}\big\rfloor\Big)
\\&\geq a(ha+d)-(j-1)ha\cdot\lceil \frac{a}{j}\rceil
\\&= a(hkj-ht+d)-(j-1)hak =a(hk+d-ht).
\end{align*}
It follows that if $hk+d-ht\ge 0$,  then $N_{dr}(m)$ is increasing,
and hence minimizes at $m=0$.
Now, we write
$$N_{dr}=N_{dr}(0)=(s+r_1)ha+rd=(s+r_1)ha+(sj+r_1)d = s(ha+jd)+(ha+d) r_1, $$
which is increasing with respect to $s$ and to $r_1$,  respectively.

By $1\leq r\leq a-1=kj-t-1$,  we see that $s\le k-1$. We need to consider the following three cases of $t$.

i) When $t=0$,  the condition $hk+d-ht\ge 0$ always holds true,  and $(s, r_1)=(k-1, j-1)$ gives rise $\max \{N_{dr}\} = (k-1)(ha+jd)+(ha+d)(j-1)=ha^2/j+(j-2)ha+(a-1)d.$

ii) When $t=1$,  the condition $hk+d-ht\ge 0$ always holds true. We need to consider two values: $(s, r_1)=(k-1, j-2)$
gives rise $N_{d(a-1)}=(k-1)(ha+jd)+(ha+d)(j-2)$; $(s, r_1)=(k-2, j-1)$,  which gives rise $N_{d(a-j)}=(k-2)(ha+jd)+(ha+d)(j-1)$. Clearly,  the former gives the maximum: $\max \{N_{dr}\} =(a+1)ha/j+(j-3)ha+(a-1)d$.

iii) When $t\ge 2$,  we need the condition $hk+d-ht\ge 0$ to ensure that $N_{dr}(m)$ is increasing. We need to consider two values:
$(s, r_1)=(k-1, j-t-1)$ gives rise $N_{d(a-1)}=(ha+jd)(k-1)+(ha+d)(j-t-1)$;
$(s, r_1)=(k-2, j-1)$ gives rise $N_{d(a-j+t-1)}=(ha+jd)(k-2)+(ha+d)(j-1)$.
Since $N_{d(a-1)}-N_{d(a-j+t-1)}=(ha+jd)-t(ha+d)=(j-t)d-(t-1)ha\leq 0$ by $d\leq h$, we have $\max \{N_{dr}\} = (ha+jd)(k-2)+(ha+d)(j-1)=\lfloor \frac{a}{j}\rfloor(ha+jd)+(j-2)ha-d$.

From $g(A)=\max \{ N_{dr}\}-a$, we get the formula of $g(A)$ in this theorem.

Next we use the alternative formula by \eqref{equnder0}:
$$N_{dr}=N_{dr}(0)=r(ha+d)-\lfloor \frac{r}{j}\rfloor(j-1)ha.$$
For $n(A)$, we have
\begin{small}
\begin{align*}
\sum_{r=1}^{a-1}N_{dr}&=\sum_{r=1}^{a-1}\big(r(ha+d)-\lfloor \frac{r}{j}\rfloor(j-1)ha\big)
\\&=\frac{a(ha+d)(a-1)}{2}-(j-1)ha\sum_{r=0}^{a-1}\lfloor\frac{r}{j}\rfloor
\\&=\frac{a(ha+d)(a-1)}{2}-(j-1)ha((1+2+\cdots +k-2)j+(j-t)(k-1))
\\&=\frac{a(ha+d)(a-1)}{2}-\frac{ha(j-1)(a-t)}{2}\big(\frac{a+t}{j}-1\big),
\end{align*}
\end{small}
which implies that
\begin{small}
\begin{align*}
n(A)&=\frac{1}{a}\sum_{r=1}^{a-1}N_{dr}-\frac{a-1}{2}
=\frac{(a-1)(ha+d-1)}{2}-\frac{h(j-1)(a-t)}{2}\big(\frac{a+t}{j}-1\big).
\end{align*}
\end{small}

For $s(A)$, we have
\begin{small}
\begin{align*}
\sum_{r=1}^{a-1}N_{dr}^2&=\sum_{r=1}^{a-1}\Big(r(ha+d)-\lfloor\frac{r}{j}\rfloor(j-1)ha\Big)^2
\\&=\sum_{r=1}^{a-1}r^2(ha+d)^2+\sum_{r=1}^{a-1}(\lfloor\frac{r}{j}\rfloor)^2(j-1)^2(ha)^2
-2ha(ha+d)(j-1)\sum_{r=0}^{a-1}r\lfloor \frac{r}{j}\rfloor
\\&=\frac{a(ha+d)^2(2a^2-3a+1)}{6}+\frac{(ha)^2(j-1)^2(k-1)}{3}\Big(jk^2-\frac{jk}{2}-3tk+3t\Big)
\\&-\frac{ha(ha+d)(j-1)(k-1)}{3}\Big(2j^2(k^2+\frac{k}{4})-2kj(3t+\frac{3}{4})+3t(t+1)\Big),
\end{align*}
\end{small}
which implies that
\begin{small}
\begin{align*}
s(A)=&\frac{1}{2a}\sum_{r=1}^{a-1}N_{dr}^2-\frac{1}{2}\sum_{r=1}^{a-1}N_{dr}+\frac{a^2-1}{12}
\\=&\frac{(ha+d)^2(2a^2-3a+1)}{12}+\frac{h^2a(j-1)^2(k-1)}{6}\Big(jk^2-\frac{jk}{2}-3tk+3t\Big)
\\&-\frac{h(ha+d)(j-1)(k-1)}{6}\Big(2j^2(k^2+\frac{k}{4})-2kj(3t+\frac{3}{4})+3t(t+1)\Big)
\\&-\frac{a(ha+d)(a-1)}{4}+\frac{ha(j-1)(a-t)}{4}\big(\frac{a+t}{j}-1\big)+\frac{a^2-1}{12}.
\end{align*}
\end{small}
We have thus proved the theorem.
\end{proof}
Further more, we can calculate
\begin{small}
\begin{align*}
\sum_{r=1}^{a-1}N_{dr}^p&=\sum_{r=1}^{a-1}\big(r(ha+d)-\lfloor\frac{r}{j}\rfloor(j-1)ha\big)^p,
\end{align*}
\end{small}
thus obtain a formula for $s_{\mu}(A)$. But such a formula is complicated.

It is not hard to show that Theorem \ref{111} is a corollary of Theorem \ref{222}.
Here we briefly explain some of the details.  Let $h=d=1$. When $2\leq t\leq j-1$,  we have $a\not\equiv -1 \mod j$. Then we can get $a\geq j^2-4j+2$ from $ k+1-t\geq 0$.
It is not difficult to find in the proof of Theorem \ref{222} that we need $a(a+1)-(j-1)a(\lceil \frac{a}{j}\rceil)\geq 0$.
If $t\leq j-2$,  we have
$a(a+1)-(j-1)a(\lceil \frac{a}{j}\rceil)=a(a+1)-(j-1)a(\frac{a+t}{j})$,  it is decreasing with respect to $t$. When $t=j-2$,  $a(a+1)-(j-1)a\cdot \frac{a+j-2}{j}\geq 0$ implies
$a\geq j^2-4j+2$.
If $t=j-1$,  similarly,  we have $a\geq j^2-3j+1$. Due to the condition $a=kj-(j-1)$,  we can reduce this lower bound by $(j-1)$. So we have $a\geq j^2-3j+1-(j-1)$. Therefore  Theorem \ref{222} is more precise than Theorem \ref{111}.

The special case $a \equiv 0 \mod j$ and  $h=d=1$ of Theorem \ref{222} can be restated as follows.
\begin{cor}\label{333}
Let $A=(sa, sa+1, sa+a)$,  $a>2$,  $s\ge 1$. Then
$g(A)=as(a+s-2)-1.$
\end{cor}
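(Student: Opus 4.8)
The plan is to obtain Corollary~\ref{333} as a direct specialization of Theorem~\ref{222}, rather than reproving it from scratch. The sequence $A=(sa,sa+1,sa+a)$ has base $sa$ and increments $+1$ and $+a$, so it is exactly the sequence treated in Theorem~\ref{222} under the identification in which the modulus (Theorem~\ref{222}'s ``$a$'') is taken to be $sa$, the gap parameter $j$ is taken to be $a$, and $h=d=1$. Thus the first step is simply to record this parameter dictionary carefully, keeping in mind the unfortunate clash of names: the letter $a$ of Theorem~\ref{222} is being set equal to $sa$, while the corollary's own $a$ becomes the value of $j$.

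Next I would verify that all hypotheses of Theorem~\ref{222} hold under this identification. The modulus $sa$ exceeds $2$ since $a>2$ and $s\ge 1$; the gap $j=a$ satisfies $j>2$; the coprimality condition $\gcd(sa,d)=\gcd(sa,1)=1$ is automatic; and $h\ge d$ holds since $1\ge 1$. Writing $sa=kj-t$ with $0\le t\le j-1$, the divisibility $sa\equiv 0 \pmod a$ (i.e. $sa\equiv 0$ modulo the gap $j=a$) forces $t=0$ and $k=s$. Hence we land in the $t=0$ branch of Theorem~\ref{222}, whose side condition $hk+d-ht\ge 0$ is automatically satisfied when $t=0$.

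The remaining step is a one-line substitution into the $t=0$ formula
$$g(A)=\frac{ha^2}{j}+(j-2)ha+(a-1)d-a.$$
Setting $h=d=1$ and replacing Theorem~\ref{222}'s $a$ by $sa$ and its $j$ by $a$ gives
$$g(A)=\frac{(sa)^2}{a}+(a-2)(sa)+(sa-1)-sa=s^2a+sa^2-2sa-1=as(a+s-2)-1,$$
which is precisely the claimed value.

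I do not expect any genuine analytic obstacle: the entire content of the corollary already resides in Theorem~\ref{222}, and the only real work is the bookkeeping of the parameter dictionary together with the elementary algebraic collapse above. The one point that demands care is exactly the relabeling, namely ensuring that the role of ``$a$'' in Theorem~\ref{222} is played by the modulus $sa$ and that the role of ``$j$'' is played by the corollary's $a$; a slip here would land one in the wrong branch. As a sanity check one could instead reprove the statement directly by applying Lemma~\ref{0202} with modulus $sa$ and $B=(1,a)$: the greedy evaluation of $O_B(M)$ and the monotonicity of $N_{dr}(m)$ then run verbatim as in the proof of Theorem~\ref{222}, and the maximum of $N_{dr}$ is attained at the top corner ($k-1$ full blocks and remainder $j-1=a-1$), reproducing the same formula.
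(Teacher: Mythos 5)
Your proposal is correct and takes essentially the same route as the paper, which obtains Corollary~\ref{333} precisely as the $h=d=1$, $a\equiv 0 \pmod j$ (i.e.\ $t=0$, $k=s$) specialization of Theorem~\ref{222}, with the theorem's modulus set to $sa$ and its $j$ set to the corollary's $a$. Your careful parameter dictionary and hypothesis checks ($sa>2$, $j=a>2$, $\gcd(sa,1)=1$, $h\ge d$, and the automatic validity of $hk+d-ht\ge 0$ when $t=0$) are exactly the bookkeeping the paper leaves implicit.
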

The special case when $s=a$ of the above result appears in \cite{M. Hujter},  i.e.,  $g(a^2, a^2+1, a^2+a)=2a^3-2a^2-1$.
This formula can be easily obtained by Theorem \ref{adb12kg}:
\begin{align*}
  g(sa, sa+1, sa+a) &= a g(s, s+1, sa+1) +(a-1)(as+1)=a g(s, s+1)+(a-1)(as+1).
\end{align*}

\subsection{The Case of $A=(a,ha+d,ha+2d,ha+jd)$}\label{hard12jj}

Using our method,  we obtain the following interesting formula.

\begin{thm}\label{012j}
Let $A=(a, ha+d, ha+2d, ha+jd)$,  $a, j, h, d\in \mathbb{P}, h\geq d$,  $a\geq 2$ and $j\geq 4$. Moreover let $a=kj-t$,  where $k\geq 1$ and $0\leq t\leq j-1$,  and we require $k+1-\lceil \frac{t}{2}\rceil\geq 0$.
Then
\begin{align*}
g(A)&=\max\Big\{ha\big(\lfloor \frac{a-1}{j}\rfloor +\lceil \frac{a-1}{2}-\frac{j}{2}\lfloor \frac{a-1}{j}\rfloor\rceil\big)+(a-1)d-a,
\\&\ \ \ \ ha\big(\lfloor\frac{a-1}{j}\rfloor+\lceil\frac{j-1}{2}\rceil-1\big)+\big(j\lfloor \frac{a-1}{j}\rfloor-1\big)d-a\Big\}.
\end{align*}
Furthermore
$$\begin{small}\begin{aligned}
g(A)=
\left\{
\begin{aligned}
ha\Big(\Big\lfloor \frac{a-1}{j}\Big\rfloor +\bigg\lceil \frac{a-1}{2}-\frac{j}{2}\Big\lfloor \frac{a-1}{j}\Big\rfloor\bigg\rceil\Big)+(a-1)d-a\ \ & \text{if}\ \bigg\lceil\frac{a-1}{2}-\frac{j}{2}\Big\lfloor \frac{a-1}{j}\Big\rfloor\bigg\rceil\geq \Big\lceil\frac{j-1}{2}\Big\rceil-1, \\
    ha\Big(\Big\lfloor\frac{a-1}{j}\Big\rfloor+\Big\lceil\frac{j-1}{2}\Big\rceil-1\Big)+\Big(j\Big\lfloor \frac{a-1}{j}\Big\rfloor-1\Big)d-a\ \ & \text{otherwise}.\\
\end{aligned}
\right.
\end{aligned}\end{small}$$
We have
$$\begin{tiny}\begin{aligned}
n(A)=
\left\{
\begin{aligned}
&\frac{(a-1)(d-1)}{2}+\frac{h(k-1)}{2}(jk-2t)+\frac{h(j-1)(j+1)(k-1)}{4}+\frac{h(j-t-1)(j-t+1)}{4}\ & \text{if}\ \ j-1\  \text{is even}, t\  \text{is even}, \\
&\frac{(a-1)(d-1)}{2}+\frac{h(k-1)}{2}(jk-2t)+\frac{h(j-1)(j+1)(k-1)}{4}+\frac{h(j-t)^2}{4} \ & \text{if}\ \ j-1\ \text{is even}, t\  \text{is odd},\ \\
&\frac{(a-1)(d-1)}{2}+\frac{h(k-1)}{2}(jk-2t)+\frac{hj^2(k-1)}{4}+\frac{h(j-t)^2}{4}\ & \text{if}\ \ j-1\ \text{is odd}, t\  \text{is even},\ \\
&\frac{(a-1)(d-1)}{2}+\frac{h(k-1)}{2}(jk-2t)+\frac{hj^2(k-1)}{4}+\frac{h(j-t-1)(j-t+1)}{4}\ & \text{if}\ \ j-1\ \text{is odd}, t\  \text{is odd}.\ \
\end{aligned}
\right.
\end{aligned}\end{tiny}$$
\end{thm}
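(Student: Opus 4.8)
The plan is to instantiate the scheme of Lemma \ref{0202} with $B=(1,2,j)$, so that $A=(a,ha+dB)$ and
\[
O_B(M)=\min\{x_1+x_2+x_3 : x_1+2x_2+jx_3=M\}.
\]
First I would solve this minimization explicitly. Since $j\ge 4$, replacing one part equal to $j$ by parts from $\{1,2\}$ costs $\lceil j/2\rceil\ge 2>1$ extra parts, so the greedy choice (take as many $j$'s as possible, then as many $2$'s as possible) is optimal; this yields the closed form
\[
O_B(M)=\Big\lfloor \frac{M}{j}\Big\rfloor+\Big\lceil \tfrac12\big(M-j\lfloor M/j\rfloor\big)\Big\rceil,
\]
which I would record as a lemma, the analogue of the $B=(1,j)$ lemma preceding Theorem \ref{222}. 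Substituting into \eqref{e-hd-11} gives $N_{dr}(m)=ha\,O_B(ma+r)+(ma+r)d$.

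The second step is to show $N_{dr}=N_{dr}(0)$, i.e. that the minimum over $m$ is attained at $m=0$. Writing $a=kj-t$ and analyzing $O_B(M+a)-O_B(M)$ by the two cases $M\bmod j\ge t$ and $M\bmod j<t$, I expect the most negative one-step change of $O_B$ to be exactly $k-\lceil t/2\rceil$, the minimum arising in the first case, when the $j$-count rises by $k$ while the $\{1,2\}$-remainder drops by $t$. Hence $N_{dr}(m+1)-N_{dr}(m)\ge ha(k-\lceil t/2\rceil)+ad=a\big(h(k-\lceil t/2\rceil)+d\big)$, and under the hypothesis $k+1-\lceil t/2\rceil\ge 0$ with $h\ge d$ this is nonnegative except in the single boundary case $k-\lceil t/2\rceil=-1$ with $h>d$. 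I expect this boundary case to be \emph{the main obstacle}: there a one-step dip of size $a(h-d)$ genuinely occurs, so per-step monotonicity fails, and one must instead argue directly that $N_{dr}(m)\ge N_{dr}(0)$ for all $m\ge1$, using that the linear term $(ma+r)d$ accumulates at least $ad$ per step (with $a\ge2$) and dominates the bounded fluctuation of the term $ha\,O_B$; the restriction $a\ge2$ should be exactly what rescues this corner.

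Granting $N_{dr}=N_{dr}(0)=ha\,O_B(r)+rd$, the formula for $g(A)$ follows from $g(A)=\max_{1\le r\le a-1}N_{dr}-a$ (Lemma \ref{LiuXin001} together with Proposition \ref{0201}). Writing $r=qj+\rho$ with $0\le\rho\le j-1$, the value $N_{dr}=q(ha+jd)+ha\lceil\rho/2\rceil+\rho d$ is increasing in both $q$ and $\rho$, so over the staircase region $qj+\rho\le a-1$ the maximum sits at one of two corners: $(q,\rho)=(\lfloor(a-1)/j\rfloor,(a-1)\bmod j)$, giving $r=a-1$, or $(q,\rho)=(\lfloor(a-1)/j\rfloor-1,j-1)$, giving $r=j\lfloor(a-1)/j\rfloor-1$. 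Evaluating $N_{dr}$ at these corners produces exactly the two entries of the stated $\max$, and comparing them, which reduces to comparing $\lceil\tfrac12((a-1)\bmod j)\rceil$ with $\lceil(j-1)/2\rceil-1$, gives the two-line piecewise formula. I would treat $k=1$ (where the second corner is infeasible, $q=-1$) separately, checking that the $\max$ expression still selects the first corner.

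For $n(A)$ I would use $n(A)=\tfrac1a\sum_{r=1}^{a-1}N_{dr}-\tfrac{a-1}{2}$ and the reindexing of Proposition \ref{0201}, so that $\sum_{r=1}^{a-1}N_{dr}=ha\sum_{r=0}^{a-1}O_B(r)+d\binom{a}{2}$. The floor part $\sum_{r=0}^{a-1}\lfloor r/j\rfloor$ is the elementary sum already used in Theorem \ref{222}, contributing the term $\tfrac{h(k-1)}{2}(jk-2t)$. For the ceiling part I would use the identity $\sum_{i=0}^{n-1}\lceil i/2\rceil=\lfloor n^2/4\rfloor$, splitting $\{0,\dots,a-1\}$ into $k-1$ complete residue blocks of length $j$ and one partial block of length $j-t$, which gives $(k-1)\lfloor j^2/4\rfloor+\lfloor(j-t)^2/4\rfloor$. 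The four displayed cases are then forced by the parities of $j$ (equivalently $j-1$) and of $t$: here $\lfloor j^2/4\rfloor$ equals $\tfrac{(j-1)(j+1)}{4}$ or $\tfrac{j^2}{4}$ according as $j$ is odd or even, while $\lfloor(j-t)^2/4\rfloor$ equals $\tfrac{(j-t-1)(j-t+1)}{4}$ or $\tfrac{(j-t)^2}{4}$ according to the parity of $j-t$. Assembling these, together with $\tfrac1a\cdot ha=h$ and the contribution $\tfrac1a\,d\binom{a}{2}-\tfrac{a-1}{2}=\tfrac{(d-1)(a-1)}{2}$, should reproduce the four expressions verbatim; this last part is bookkeeping rather than a genuine difficulty.
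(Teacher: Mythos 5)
Your route is the paper's route almost verbatim: the same reduction via Lemma \ref{0202} with $B=(1,2,j)$, the same greedy closed form $O_B(M)=\lfloor M/j\rfloor+\lceil (M\bmod j)/2\rceil$, the same two-corner maximization over the staircase $qj+\rho\le a-1$ (your $k=1$ caveat is a point the paper does not even mention, and your check that the first corner wins there is fine under the hypothesis), and your identity $\sum_{i=0}^{n-1}\lceil i/2\rceil=\lfloor n^2/4\rfloor$ reproduces exactly the paper's four parity cases for $n(A)$, since $\lfloor j^2/4\rfloor$ and $\lfloor (j-t)^2/4\rfloor$ split according to the parities of $j$ and $j-t$. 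All of that is sound bookkeeping.

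The genuine gap is in your step 2, and it is not patchable as you propose. In the corner case $k-\lceil t/2\rceil=-1$ with $h>d$ you correctly observe that per-step monotonicity of $N_{dr}(m)$ fails, but your rescue — that the linear term $(ma+r)d$ ``dominates the bounded fluctuation of $ha\,O_B$'' so that $N_{dr}(m)\ge N_{dr}(0)$ still holds — is false: the dip at a bad step is $a(h-d)>0$ times $-1$, i.e.\ $N_{dr}(1)-N_{dr}(0)=-a(h-d)<0$, and the very first step from $m=0$ can be a bad step, after which no later growth restores the minimum to $m=0$. Concretely, take $(a,j,h,d)=(22,10,2,1)$, so $k=3$, $t=8$, $k+1-\lceil t/2\rceil=0$, and $A=(22,45,46,54)$. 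For $r=19$ one has $O_B(19)=6$ but $O_B(41)=5$, whence $N_{19}(1)-N_{19}(0)=ha(5-6)+ad=-22$ and $N_{19}=261=45+4\cdot 54<283=N_{19}(0)$. Indeed the theorem's displayed formula then gives $g(A)=261$, yet $261$ is representable, and the true Frobenius number is $\max_r N_r-a=261-22=239$. So in the equality case $k+1=\lceil t/2\rceil$ with $h>d$ the statement itself fails, and no argument can close your gap; the correct repair is to strengthen the hypothesis to $k\ge\lceil t/2\rceil$ (or allow equality only when $h=d$), under which both your per-step bound and the rest of your proof are complete. It is worth noting that your instinct about where the difficulty sits is sharper than the paper's own proof, whose chain $\big(k+\lceil\frac{r_1-t}{2}\rceil-\lceil\frac{r_1}{2}\rceil\big)ha+ad\ \ge\ \big(k+1-\lceil\frac{t}{2}\rceil\big)ad$ silently replaces $ha$ by $ad$ in a term whose coefficient can be $-1$, which reverses the inequality exactly in this corner; the paper thus glosses over the very case you flagged.
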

\begin{rem}
Recently, the formula $g(A)$ for $A=(a,a+d,..., a+kd, a+Kd)$ (i.e., a arithmetic sequence with an additional term) was studied in \cite{Rodseth}. The cases $j=4, 5, 6$ and $h=d=1$ (stated in Corollary \ref{012456})  were studied by A. L. Dulmage and N. S. Mendelsohn \cite{A. L. Dulmage} using graphical methods for $g(A)$.
\end{rem}

Now we provide a short proof for the general case.
\begin{proof}
By Lemma \ref{0202},  we need to solve $O_B(M)=\min\{x_1+x_2+x_3 \mid x_1+2x_2+jx_3=M\}$.
For a given $r$,  assume $M=ma+r=js+r_1$,  where$\ s\geq 0,  0\leq r_1< j$. Then
$$O_B(M)=O_B(ma+r)=s+\big\lceil \frac{r_1}{2} \big\rceil = \big\lfloor \frac{ma+r}{j} \big\rfloor + \big\lceil \frac{r_1}{2} \big\rceil, $$
and
$$N_{dr}(m)=\Big(s+\big\lceil \frac{r_1}{2} \big\rceil\Big)ha+(ma+r)d=\Big(\big\lfloor \frac{ma+r}{j} \big\rfloor +\big\lceil \frac{r_1}{2} \big\rceil\Big)ha+(ma+r)d, $$
where $r_1$ is related to $m$. We next prove that $N_{dr}(m)$ increases with respect to $m$.

By $ma+r=js+r_1$ and $a=kj-t$ with $k\geq 1$,  $0\leq t\leq j-1$. We have
$(m+1)a+r=js^{\prime}+r_1^{\prime}=j(s+k)+r_1-t$,  where $0\leq r_1^{\prime}\leq j-1$.
If $r_1<t$,  we have $r_1^{\prime}=r_1+j-t>r_1$ and $s^{\prime}=s+k-1\geq s$. Obviously,  we have $N_{dr}(m+1)\geq N_{dr}(m)$. If $r_1\geq t$,  we have $r_1^{\prime}=r_1-t\leq r_1$ and $s^{\prime}=s+k>s$. Now consider
$$\begin{aligned}
N_{dr}(m+1)-N_{dr}(m)&=\Big(k+\big\lceil \frac{r_1-t}{2}\big\rceil-\big\lceil \frac{r_1}{2}\big\rceil\Big)ha+ad
\\&\geq \Big(k+1-\big\lceil \frac{t}{2}\big\rceil\Big)ad \geq 0.
\end{aligned}$$
Therefore $N_{dr}(m)$ increases with respect to $m$.

It follows that if $r=sj+r_1$ with $0\leq r_1< j$,  then
$$ N_{dr}=N_{dr}(0)=ha \cdot O_B(r)+rd =ha\Big(s + \big\lceil \frac{r_1}{2} \big\rceil\Big) + (sj+r_1)d=(ha+jd)s+ha\big\lceil \frac{r_1}{2} \big\rceil+dr_1.$$
This is increasing with respect to $s$ and to $r_1$,  respectively.
If $a-1= s_0 j + r_0$ with $0\le r_0 \le j-1$,  then we need only consider two cases:

$\bullet$ $(s, r_1)=(s_0, r_0)$,  which gives rise
$N_{d(a-1)}=ha\big(s_0 + \big\lceil \frac{r_0}{2} \big\rceil\big) + (a-1)d $;

$\bullet$ $(s, r_1)=(s_0-1, j-1)$,  which gives rise
$N_{d(a-r_0-2)}=ha\big(s_0-1+\big\lceil \frac{j-1}{2} \big\rceil\big)+(a-r_0-2)d$.

Since $s_0=\big\lfloor \frac{a-1}{j}\big\rfloor$ and $r_0=a-1-j\big\lfloor \frac{a-1}{j}\big\rfloor$, we have
\begin{align*}
g(A)&=\max\{N_{dr}\}-a =\max\{N_{d(a-1)}-a, \ \ N_{d(a-r_0-2)}-a\}
\\&=\max\Big\{ha\big(s_0+\big\lceil\frac{r_0}{2}\big\rceil\big)+(a-1)d-a, \ \ ha\big(s_0+\big\lceil\frac{j-1}{2}\big\rceil-1\big)+(a-r_0-2)d-a\Big\}
\\&=\max\Big\{ha\Big(\big\lfloor \frac{a-1}{j}\big\rfloor +\Big\lceil \frac{a-1}{2}-\frac{j}{2}\big\lfloor \frac{a-1}{j}\big\rfloor\Big\rceil\Big)+(a-1)d-a,
\\&\ \ \ \ \ \ \ \ \ \ ha\Big(\big\lfloor\frac{a-1}{j}\big\rfloor+\big\lceil\frac{j-1}{2}\big\rceil-1\Big)+\big(j\big\lfloor \frac{a-1}{j}\big\rfloor-1\big)d-a\Big\}.
\end{align*}
Now it is straightforward to check that:
If $\big\lceil\frac{r_0}{2}\big\rceil \geq \big\lceil\frac{j-1}{2}\big\rceil-1$, i.e.,  $\Big\lceil\frac{a-1}{2}-\frac{j}{2}\big\lfloor \frac{a-1}{j}\big\rfloor\Big\rceil \geq \big\lceil\frac{j-1}{2}\big\rceil-1$,  then we have
$$g(A)=N_{d(a-1)}-a=ha\Big(\Big\lfloor \frac{a-1}{j}\Big\rfloor +\bigg\lceil \frac{a-1}{2}-\frac{j}{2}\Big\lfloor \frac{a-1}{j}\Big\rfloor\bigg\rceil\Big)+(a-1)d-a;$$
Otherwise,
$$g(A)=N_{d(a-r_0-2)}-a=ha\Big(\Big\lfloor\frac{a-1}{j}\Big\rfloor+\Big\lceil\frac{j-1}{2}\Big\rceil-1\Big)+\Big(j\Big\lfloor \frac{a-1}{j}\Big\rfloor-1\Big)d-a.$$

Next let $r=js+r_1$,  where$\ s\geq 0,  0\leq r_1< j$ and write
$$N_{dr}=N_{dr}(0)=\big(\big\lfloor \frac{r}{j} \big\rfloor +\big\lceil \frac{r_1}{2} \big\rceil\big)ha+rd.$$
By $a-1=(k-1)j+(j-t-1)$,  where $k\geq 1$ and $0\leq t\leq j-1$. The computation for $n(A)$
is divided into four cases according to the parity of $j$ and $t$.

Case 1: If $j-1$ is even, $t$ is even, we have
\begin{small}
\begin{align*}
\sum_{r=1}^{a-1}N_{dr}&=\sum_{r=1}^{a-1}\big(\lfloor\frac{r}{j}\rfloor +\lceil\frac{r_1}{2}\rceil\big)ha+rd
\\&=\sum_{r=0}^{a-1}dr+ha\bigg(\sum_{i=1}^{k-2}j+(k-1)(j-t)+\big(1+\cdots +\frac{j-1}{2}\big)2(k-1)+\big(1+\cdots+\frac{j-t-1}{2}\big)2\bigg)
\\&=\frac{a(a-1)d}{2}+\frac{ha(k-1)}{2}(jk-2t)+\frac{ha(j-1)(j+1)(k-1)}{4}
+\frac{ha(j-t-1)(j-t+1)}{2}.
\end{align*}
\end{small}
Case 2: If $j-1$ is even, $t$ is odd, we have
\begin{small}
\begin{align*}
\sum_{r=1}^{a-1}N_{dr}&=\sum_{r=1}^{a-1}\big(\lfloor\frac{r}{j}\rfloor +\lceil\frac{r_1}{2}\rceil\big)ha+rd
\\&=\frac{a(a-1)d}{2}+\frac{ha(k-1)}{2}(jk-2t)+\frac{ha(j-1)(j+1)(k-1)}{4}+\frac{ha(j-t)^2}{4}.
\end{align*}
\end{small}
Case 3: If $j-1$ is odd, $t$ is even, we have
\begin{small}
\begin{align*}
\sum_{r=1}^{a-1}N_{dr}&=\sum_{r=1}^{a-1}\big(\lfloor\frac{r}{j}\rfloor +\lceil\frac{r_1}{2}\rceil\big)ha+rd
\\&=\frac{a(a-1)d}{2}+\frac{ha(k-1)}{2}(jk-2t)+\frac{haj^2(k-1)}{4}+\frac{ha(j-t)^2}{4}.
\end{align*}
\end{small}
Case 4: If $j-1$ is odd, $t$ is odd, we have
\begin{small}
\begin{align*}
\sum_{r=1}^{a-1}N_{dr}&=\sum_{r=1}^{a-1}\big(\lfloor\frac{r}{j}\rfloor +\lceil\frac{r_1}{2}\rceil\big)ha+rd
\\&=\frac{a(a-1)d}{2}+\frac{ha(k-1)}{2}(jk-2t)+\frac{haj^2(k-1)}{4}+\frac{ha(j-t-1)(j-t+1)}{4}.
\end{align*}
\end{small}
By Lemma \ref{LiuXin001}, we have
$$\begin{tiny}\begin{aligned}
n(A)&=\frac{1}{a}\sum_{r=1}^{a-1}N_{dr}-\frac{a-1}{2}
\\&=
\left\{
\begin{aligned}
&\frac{(a-1)(d-1)}{2}+\frac{h(k-1)}{2}(jk-2t)+\frac{h(j-1)(j+1)(k-1)}{4}+\frac{h(j-t-1)(j-t+1)}{4}\ & \text{if}\ \ j-1\  \text{is even}, t\  \text{is even}, \\
&\frac{(a-1)(d-1)}{2}+\frac{h(k-1)}{2}(jk-2t)+\frac{h(j-1)(j+1)(k-1)}{4}+\frac{h(j-t)^2}{4} \ & \text{if}\ \ j-1\ \text{is even}, t\  \text{is odd},\ \\
&\frac{(a-1)(d-1)}{2}+\frac{h(k-1)}{2}(jk-2t)+\frac{hj^2(k-1)}{4}+\frac{h(j-t)^2}{4}\ & \text{if}\ \ j-1\ \text{is odd}, t\  \text{is even},\ \\
&\frac{(a-1)(d-1)}{2}+\frac{h(k-1)}{2}(jk-2t)+\frac{hj^2(k-1)}{4}+\frac{h(j-t-1)(j-t+1)}{4}\ & \text{if}\ \ j-1\ \text{is odd}, t\  \text{is odd}.\ \
\end{aligned}
\right.
\end{aligned}\end{tiny}$$
This completes the proof.
\end{proof}
For $\sum_{r=1}^{a-1}N_{dr}^2$, we have
\begin{small}
\begin{align*}
\sum_{r=1}^{a-1}N_{dr}^2&=\sum_{r=1}^{a-1}\Big((\lfloor\frac{r}{j}\rfloor +\lceil\frac{r_1}{2}\rceil)ha+rd\Big)^2
\\&=\sum_{r=1}^{a-1}d^2r^2+h^2a^2\sum_{r=1}^{a-1}\Big(\lfloor \frac{r}{j}\rfloor+\lceil \frac{r_1}{2}\rceil\Big)^2+2had\sum_{r=1}^{a-1}r\Big(\lfloor \frac{r}{j}\rfloor+\lceil \frac{r_1}{2}\rceil\Big).
\end{align*}
\end{small}
In particular, the simplification of $\sum_{r=1}^{a-1}r(\lfloor \frac{r}{j}\rfloor+\lceil \frac{r_1}{2}\rceil)$ is difficult. Therefore, a finer simplification of $s(A)$ is difficult. However, we will provide a fast algorithm for calculating $s(A)$ in later section.

\begin{cor}{\em \cite{A. L. Dulmage}}\label{012456}
$\bullet$ Let $A=(a, a+1, a+2, a+4)$ with $a\geq 2$.  Then
$$g(A)=(a+1)\lfloor \dfrac{a}{4}\rfloor+\lfloor \dfrac{a+1}{4}\rfloor+2\lfloor\dfrac{a+2}{4}\rfloor -1.$$

$\bullet$ Let $A=(a, a+1, a+2, a+5)$ with $a\geq 2$.  Then
$$g(A)=a\lfloor \dfrac{a+1}{5}\rfloor+\lfloor \dfrac{a}{5}\rfloor+\lfloor\dfrac{a+1}{5}\rfloor+\lfloor\dfrac{a+2}{5}\rfloor+2\lfloor \dfrac{a+3}{5}\rfloor -1.$$

$\bullet$ Let $A=(a, a+1, a+2, a+6)$ with $a\geq 2$.  Then
$$g(A)=a\lfloor \dfrac{a}{6}\rfloor+2\lfloor \dfrac{a}{6}\rfloor+2\lfloor\dfrac{a+1}{6}\rfloor+5\lfloor\dfrac{a+2}{6}\rfloor+\lfloor \dfrac{a+3}{6}\rfloor+\lfloor\dfrac{a+4}{6}\rfloor+\lfloor\dfrac{a+5}{6}\rfloor-1.$$
\end{cor}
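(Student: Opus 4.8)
The plan is to obtain all three formulas as specializations of Theorem \ref{012j} with $h=d=1$ and $j\in\{4,5,6\}$. Since that theorem already delivers a closed form for $g(a,ha+d,ha+2d,ha+jd)$, the only work is (i) confirming that the hypotheses of Theorem \ref{012j} hold throughout the range $a\ge 2$, (ii) resolving the outer maximum on each residue class of $a$ modulo $j$, and (iii) checking that the resulting branch agrees with the stated sum of floor functions.

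First I would verify the standing hypothesis $k+1-\lceil t/2\rceil\ge 0$, where $a=kj-t$ with $0\le t\le j-1$ and $k\ge 1$. For $j\in\{4,5\}$ one has $\lceil t/2\rceil\le 2$, hence $k+1-\lceil t/2\rceil\ge k-1\ge 0$ automatically. For $j=6$ the only tight case is $t=5$, where $\lceil t/2\rceil=3$ forces $k\ge 2$; but $t=5$ means $a\equiv 1\bmod 6$, whose smallest value $\ge 2$ is $a=7$ (with $k=2$), so the hypothesis is satisfied for all $a\ge 2$. Next I would resolve the maximum. Writing $r_0=(a-1)\bmod j = a-1-j\lfloor (a-1)/j\rfloor$, the inner ceiling $\lceil \tfrac{a-1}{2}-\tfrac{j}{2}\lfloor\tfrac{a-1}{j}\rfloor\rceil$ equals $\lceil r_0/2\rceil$, so the dichotomy in Theorem \ref{012j} collapses to the simple test $\lceil r_0/2\rceil\ge \lceil (j-1)/2\rceil-1$. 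For $j=4$ and $j=5$ the threshold $\lceil(j-1)/2\rceil-1$ equals $1$, so the first branch $N_{d(a-1)}-a$ is selected unless $r_0=0$ (that is, $a\equiv 1\bmod j$); for $j=6$ the threshold is $2$, so the first branch is selected exactly when $r_0\ge 3$.

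With the branch fixed on each residue class, I would complete the match as follows. On a class $a\equiv c\bmod j$ one may write $\lfloor a/j\rfloor=(a-c)/j$, and similarly replace each shifted floor $\lfloor (a+i)/j\rfloor$ by its affine-in-$a$ value; this turns both the branch selected above (whose only nonlinear term is $a\lfloor (a-1)/j\rfloor$) and the target sum of floors (whose nonlinear term is $(a+1)\lfloor a/j\rfloor$ in the $j=4$ case, and analogously for $j=5,6$) into genuine quadratic polynomials in $a$. Checking that the two quadratics coincide on each of the $j$ classes is a short finite computation; for instance the two evaluations $a=4$ and $a=5$ already reconcile the first and second branches of the $j=4$ formula with $(a+1)\lfloor a/4\rfloor+\lfloor (a+1)/4\rfloor+2\lfloor (a+2)/4\rfloor-1$.

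I expect the main obstacle to be purely bookkeeping rather than conceptual. The $j=6$ target is a sum of seven floor terms distributed across six residue classes, so matching it against the branch-resolved output requires careful tracking of which floor increments at which class and how the first/second branch split at $r_0=3$ interacts with that. The residue-class reduction to quadratic-polynomial identities is precisely what keeps this bookkeeping finite and checkable, and it confines the only real risk—arithmetic slips in the six-way case analysis—to a mechanical verification.
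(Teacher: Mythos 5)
Your proposal is correct and follows essentially the same route as the paper: both specialize Theorem \ref{012j} with $h=d=1$ and $j\in\{4,5,6\}$, verify the hypothesis $k+1-\lceil t/2\rceil\geq 0$ (including the same observation that the only tight case is $j=6$, $t=5$, where $a\geq 2$ forces $k\geq 2$), and then confirm the floor-function formulas residue class by residue class modulo $j$. Your explicit resolution of the two-branch maximum via the test $\lceil r_0/2\rceil\geq\lceil (j-1)/2\rceil-1$ is a slightly more systematic organization of the finite check that the paper leaves to the reader, but it is the same argument.
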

\begin{proof}
By Theorem \ref{012j} with $j=4, h=d=1$. We have $0\leq t\leq 3$,  therefore $k+1-\lceil \frac{t}{2}\rceil \geq 0$ satisfies the condition of Theorem \ref{012j}.
If $a=4k,  k\geq1$,  then $g(A)=(a^2+4a-4)/4$ is consistent with the above formula. Similarly,  $g(A)$ can be computed for $a=4k-1, \ a=4k-2, \ a=4k-3$. The results are consistent with the above. In the same way,  the reader can check that for $j=5, 6$,  the above formula holds. It is worth noting that when $j=6$,  we have $0\leq t\leq 5$. The condition $k+1-\lceil \frac{t}{2}\rceil\geq 0$ obviously holds when $0\leq t\leq 4$, but holds
when $t=5$ only for $a\geq 2$ which implies $k\geq 2$.
\end{proof}

\subsection{The Case of $A=(a^2,ha^2+d,ha^2+ad,ha^2+(a+1)d)$}
There are many cases the corresponding $O_B(M)$ cannot reach the optimal solution by the greedy algorithm. For example,  when  $B=(1, a, a+1)$,  we have
$$O_B(M)= \min \{x_1+x_2+x_3: x_1+a x_2+(a+1)x_3 =M \}.$$
Take $M=2a, a>2$. The greedy algorithm gives $(x_1, x_2, x_3)=(a-1, 0, 1)$,  but the optimal solution is obviously at $(x_1, x_2, x_3)=(0, 2, 0)$. Indeed,  we have the following result.

\begin{lem}\label{1aa1}
Suppose $B=(1, a, a+1)$,  $a\ge 2$,  $M>0$. If $(a+1)\nmid M$ and $M\geq a(\lfloor \frac{M}{a+1}\rfloor+1)$,  then we have $O_B(M)=1+\lfloor \frac{M}{a+1}\rfloor$;
Otherwise,  we have $O_B(M)=M-a\lfloor \frac{M}{a+1}\rfloor$.
\end{lem}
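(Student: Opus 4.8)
The plan is to pin down $O_B(M)$ by sandwiching it between a lower bound and an explicitly constructed optimal solution, after first eliminating the slack variable $x_1$. Write $q=\lfloor M/(a+1)\rfloor$ and $\rho=M-(a+1)q$, so that $0\le\rho\le a$ and the two claimed values are $q+1$ and $M-aq=q+\rho$. For any feasible $(x_1,x_2,x_3)$ with total $S=x_1+x_2+x_3$, the identity $M=x_1+ax_2+(a+1)x_3=S+(a-1)x_2+ax_3$ shows that $x_1=S-(x_2+x_3)\ge 0$ and $(a-1)x_2+ax_3=M-S$. Hence a representation of $M$ by $B$ with total $S$ exists if and only if $T:=M-S\ge 0$ can be written as $(a-1)x_2+ax_3$ using at most $S$ coins from $\{a-1,a\}$, and $O_B(M)$ is the least such $S$.

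The key auxiliary fact I would record first is that $T\ge 0$ is representable as $(a-1)x_2+ax_3$ with exactly $S'$ coins if and only if $(a-1)S'\le T\le aS'$; indeed, for a fixed coin count $S'$ the attainable totals are precisely the integers in $[(a-1)S',aS']$, since swapping one $a$-coin for an $(a-1)$-coin changes the total by $1$. Consequently, a representation of $M$ with total $S$ exists exactly when the interval $[\,\lceil T/a\rceil,\ \min(S,\lfloor T/(a-1)\rfloor)\,]$ contains an integer, where $T=M-S$.

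For the lower bound, every coefficient is at most $a+1$, so $M\le (a+1)S$ and therefore $O_B(M)\ge\lceil M/(a+1)\rceil$, which is $q$ when $(a+1)\mid M$ and $q+1$ otherwise. When $(a+1)\mid M$ the choice $x_3=q$ attains $q=M-aq$, giving the ``otherwise'' formula trivially. In the first asserted case ($(a+1)\nmid M$ and $M\ge a(q+1)$) the bound gives $O_B(M)\ge q+1$, and I would then exhibit the solution $x_1=0$, $x_2=a-\rho+1$, $x_3=q+\rho-a$: it is nonnegative precisely because $M\ge a(q+1)$ is equivalent to $q+\rho\ge a$ (and $\rho\le a$ forces $x_2\ge 1$), it has total $q+1$, and it sums to $M$ by direct substitution. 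This settles case A.

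The remaining case ($(a+1)\nmid M$, i.e. $1\le\rho\le a$, but $M<a(q+1)$, i.e. $q+\rho\le a-1$) is the crux, since here the crude bound $q+1$ is not tight and one must rule out every total below $q+\rho$. Using the reformulation, for a candidate total $S=q+\ell$ with $1\le\ell\le\rho-1$ I would compute $T=M-S=qa+(\rho-\ell)$ with $1\le\rho-\ell\le a-1$, whence $\lceil T/a\rceil=q+1$; on the other hand, writing $T=q(a-1)+(q+\rho-\ell)$ and using $q+\rho-\ell\le a-2$ (which follows from $q+\rho\le a-1$ and $\ell\ge 1$) gives $\lfloor T/(a-1)\rfloor=q$. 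Thus the interval $[\,\lceil T/a\rceil,\lfloor T/(a-1)\rfloor\,]$ is empty, so no feasible representation has total $S<q+\rho$; since $x_3=q,\ x_1=\rho$ attains $q+\rho$, we conclude $O_B(M)=q+\rho=M-aq$. \emph{The main obstacle is exactly this representability obstruction for the coin system $\{a-1,a\}$}: the naive volume bound $\lceil M/(a+1)\rceil$ cannot detect that the leftover $T$ fails to spread over few enough coins, and the auxiliary fact of the second paragraph is what upgrades this into the sharp bound $q+\rho$.
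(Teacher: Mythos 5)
Your proof is correct, and it takes a genuinely different route from the paper's. The paper argues directly about the support of an optimal solution: since $1+a=a+1$, the exchange $(x_1,x_2,x_3)\mapsto(x_1-1,x_2-1,x_3+1)$ shows any optimum has $x_1=0$ or $x_2=0$, and the proof then compares the two resulting families, producing $(0,\,a-r_1+1,\,s-a+r_1)$ in the first case and $(r_1,0,s)$ in the second (in the paper's notation $M=s(a+1)+r_1$). You instead eliminate $x_1$ through the identity $M=S+(a-1)x_2+ax_3$ with $S$ the total, turning the question into budget-constrained representability of $T=M-S$ by the two coins $\{a-1,a\}$, settled by the interval characterization: sums of exactly $S'$ such coins are precisely the integers in $[(a-1)S',aS']$. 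The arithmetic core is the same in both proofs (consecutive coin values sweep out full integer intervals), but your reparametrization buys rigor exactly where the paper is tersest. In the crux case $(a+1)\nmid M$ with $q+\rho\le a-1$, the paper simply asserts ``it is easy to see that $x_1\neq 0$,'' whereas you explicitly rule out every intermediate total $S=q+\ell$, $1\le\ell\le\rho-1$, by computing $\lceil T/a\rceil=q+1>q=\lfloor T/(a-1)\rfloor$; likewise your lower bound $O_B(M)\ge\lceil M/(a+1)\rceil$, which the paper leaves implicit, is what certifies optimality of the constructed solution $(0,\,a-\rho+1,\,q+\rho-a)$ in the first case rather than relying on the support analysis. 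What the paper's exchange argument buys in return is a quicker structural picture of what optimal solutions look like, which generalizes naturally to other greedy-type instances of $O_B(M)$ discussed elsewhere in the paper. All of your verifications check out, including the vacuity of the intermediate range when $\rho=1$ (where $q+1=q+\rho$ and the two formulas agree).
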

\begin{proof}
Suppose $M=s(a+1)+r_1$ where $0\le r_1 \le a$. We claim that if the minimum occurs at $(x_1, x_2, x_3)$,  then $x_1=0$ or $x_2=0$ or both. If not,  say $x_1$ and $x_2$ are both positive,  then
$(x_1', x_2', x_3')=(x_1-1, x_2-1, x_3+1)$ satisfies $x_1'+ax_2'+(a+1)x_3'=M$ but has smaller $x_1'+x_2'+x_3'$,  a contradiction.

Firstly,  since $(a+1)\nmid M$ is equivalent to $r_1\neq 0$,  we have $x_1=0,  x_2\neq 0$ or $x_1\neq 0,  x_2=0$. Since
$M\geq a(\lfloor \frac{M}{a+1}\rfloor+1)$ is equivalent to $s+r_1-a\geq 0$,  we have
$M=s(a+1)+r_1=(s-a+r_1)(a+1)+(a-r_1+1)a$. At this time,  we have $x_1=0, x_2\neq 0$ and
the minimum occurs at $(0,  a-r_1+1 ,  s-a+r_1)$. Therefore $O_B(M)=s+1=1+\lfloor \frac{M}{a+1}\rfloor$.

Secondly,  for other cases,  it is easy to see that $x_1\neq 0$ so that the minimum occurs at $(r_1, 0, s)$. Therefore $O_B(M)=s+r_1=M-a\lfloor \frac{M}{a+1}\rfloor$.
\end{proof}

When $M=ma^2+r, m\in\mathbb{N}$,  $O_B(M)$ has a better result. This allows us to further discuss $N_r(m)$,  even the Frobenius formula $g(A)$. The formula in the following theorem seems new. If $h=d=1$, we have $A=(a^2, a^2+1, a^2+a, a^2+a+1)$. This case was considered in \cite{D. Einstein} as a hard instance for their geometric construction.
\begin{thm}
Let $A=(a^2, ha^2+d, ha^2+ad, ha^2+(a+1)d)$,  $a>1$, $gcd(a,d)=1$. Then
\begin{align*}
g(A)&=ha^3+(d-h-1)a^2-d,\\
n(A)&=\frac{2}{3}ha^3+\frac{1}{2}(d-h-1)a^2-\frac{1}{6}ha+\frac{1}{2}(1-d),\\
s(A)&=\frac{1}{24}\Big(6h^2a^6+(9dh-8h^2-8h)a^5+(4d^2-5dh-6d+6h+2)a^4
\\&\ \ \ \ +(2h^2-11dh+2h)a^3+(5dh-6d^2+6d)a^2+2dha+2d^2-2\Big).
\end{align*}
\end{thm}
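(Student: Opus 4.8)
The plan is to run the scheme of Lemma~\ref{0202} with base modulus $a^2$ and block $B=(1,a,a+1)$, for which $O_B(M)$ is already determined explicitly by Lemma~\ref{1aa1}. Since $\gcd(a^2,d)=1$ is equivalent to $\gcd(a,d)=1$ (and this also gives $\gcd(A)=1$), Proposition~\ref{0201} lets me compute $g(A),n(A),s(A)$ from the residues $N_{dr}$ in place of $N_r$. By \eqref{e-hd-11} the relevant intermediate function is
$$N_{dr}(m)=ha^2\cdot O_B(ma^2+r)+(ma^2+r)d,\qquad 0\le r\le a^2-1,$$
and the first task is to show that $N_{dr}(m)$ increases with $m$, so that $N_{dr}=N_{dr}(0)=ha^2\,O_B(r)+rd$.

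For the monotonicity I would write $M=q(a+1)+r_1$ with $0\le r_1\le a$ and use the identity $a^2=(a-1)(a+1)+1$ to obtain
$$M+a^2=(q+a-1)(a+1)+(r_1+1)\ \ (r_1\le a-1),\qquad M+a^2=(q+a)(a+1)\ \ (r_1=a).$$
Feeding both forms into the explicit description of Lemma~\ref{1aa1}, a short case check on whether $r_1=0$ and whether $q+r_1\ge a$ yields $O_B(M+a^2)-O_B(M)\ge 1$ in every case. Hence $N_{dr}(m+1)-N_{dr}(m)\ge ha^2+a^2d>0$, so the minimum over $m$ is attained at $m=0$ and $N_{dr}=ha^2\,O_B(r)+rd$ on $0\le r\le a^2-1$.

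The Frobenius number is then immediate. From the same case description one reads off $O_B(r)\le a-1$ for $0\le r\le a^2-1$ (the value $q=a-1$ forces $r_1=0$), with equality at $r=a^2-1$, where $r$ is also maximal. Since $N_{dr}(0)=ha^2\,O_B(r)+rd$ is a sum of two quantities each maximized at $r=a^2-1$, we get $\max_r N_{dr}=ha^2(a-1)+(a^2-1)d$, and $g(A)=\max_r N_{dr}-a^2$ is exactly the stated $ha^3+(d-h-1)a^2-d$.

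For $n(A)$ and $s(A)$, Lemma~\ref{LiuXin001} (with modulus $a^2$) reduces everything to $\sum_{r=1}^{a^2-1}N_{dr}$ and $\sum_{r=1}^{a^2-1}N_{dr}^2$; expanding $N_{dr}=ha^2\,O_B(r)+rd$ leaves the three sums $S_1=\sum_r O_B(r)$, $S_2=\sum_r O_B(r)^2$ and $S_{11}=\sum_r r\,O_B(r)$, besides elementary power sums of $r$. I would evaluate these by partitioning $\{0,\dots,a^2-1\}$ through $r=q(a+1)+r_1$ ($0\le q\le a-1$, $0\le r_1\le a$) into the three regions of Lemma~\ref{1aa1}: $r_1=0$ with $O_B=q$; $r_1\ne0$, $q+r_1\ge a$ with $O_B=q+1$; and $r_1\ne0$, $q+r_1<a$ with $O_B=q+r_1$. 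On each region $O_B$ is linear in $(q,r_1)$, so each summand is a polynomial of degree at most two and each region contributes a closed-form polynomial in $a$; as a check, $S_1=\frac{a}{6}(4a+1)(a-1)$ already reproduces the displayed $n(A)$. The main obstacle is the cross term $S_{11}$ required for $s(A)$: the regions $q+r_1<a$ and $q+r_1\ge a$ are triangular, so assembling their contributions demands careful handling of the double-summation ranges, precisely the kind of $\sum_r r(\cdots)$ term the paper flags as difficult after Theorem~\ref{012j}. Here it nevertheless collapses to a clean polynomial because $B=(1,a,a+1)$ makes the regions exactly rectangular or triangular, and the laborious but mechanical bookkeeping of the resulting cancellations is what produces the sixth-degree polynomial recorded for $s(A)$.
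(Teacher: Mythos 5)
Your proposal is correct and takes essentially the same route as the paper's own proof: the same reduction via Lemma \ref{0202} to $O_B$ with $B=(1,a,a+1)$ and Lemma \ref{1aa1}, monotonicity of $N_{dr}(m)$ in $m$ (the paper deduces it from $s=\lfloor M/(a+1)\rfloor$ jumping by at least $a-1$ per step, you from the equivalent case check giving $O_B(M+a^2)-O_B(M)\ge 1$), the same maximizer $r=a^2-1$ where $O_B(r)=a-1$ and $r$ peak simultaneously, and the same partition of $\{0,\dots,a^2-1\}$ into the Lemma \ref{1aa1} regions for the sums (the paper's double sums $\sum_{m}\sum_{i}$ are exactly your triangular regions). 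Your consistency check $S_1=\frac{a}{6}(4a+1)(a-1)$ indeed matches the paper's value of $\sum_{r=1}^{a^2-1}N_{dr}$, so the remaining bookkeeping for $S_2$ and $S_{11}$ is the same mechanical computation the paper also abbreviates.
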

\begin{proof}
By Lemma \ref{0202},  $O_B(M)=\min\{x_1+x_2+x_3 \mid x_1+x_2a+x_3(a+1)=M\}$.
Suppose
$$M=ma^2+r=(a+1)\cdot s+r_1, $$
where$\ s\geq 0,  0\leq r_1< a+1$. By Lemma \ref{1aa1},  we have the following two types:

$\bullet$ If $r_1\neq 0$ and $a-s\leq r_1\leq a$,  we have $O_B(M)=s+1$ and
$N_{dr}(m)=(s+1)ha^2+(ma^2+r)d=(\lfloor\frac{ma^2+r}{a+1}\rfloor+1)ha^2+(ma^2+r)d$.

$\bullet$ If $0\leq r_1<a-s$,  we have $O_B(M)=s+r_1$ and
$N_{dr}(m)=(s+r_1)ha^2+(ma^2+r)d=(\lfloor\frac{ma^2+r}{a+1}\rfloor+r_1)ha^2+(ma^2+r)d$.

For the above two types,  as $m$ increases by $1$,  the value of $s$ at least increases by
$\lfloor \frac{a^2}{a+1}\rfloor=\lfloor \frac{(a+1)(a-1)+1}{a+1}\rfloor=a-1$. Obviously,  $N_{dr}(m)$ is increasing with respect to $m$. So,  we have $N_{dr}=N_{dr}(0)=O_B(r)ha^2+rd$.
When $r=a^2-1=(a+1)(a-1)$,  both $O_B(r)=a-1$ and $r$ reach the maximum.
Then we have
$\max\{ N_{dr}\}=(a-1)ha^2+(a^2-1)d=ha^3+(d-h)a^2-d$, and $g(A)=\max \{ N_{dr}\}-a^2=ha^3+(d-h-1)a^2-d$.

Next we rewrite $N_{dr}=N_{dr}(0)$ as follows.

$\bullet$ If $r_1\neq 0$ and $a-s\leq r_1\leq a$,  we have
$N_{dr}=(\lfloor\frac{r}{a+1}\rfloor+1)ha^2+rd$.

$\bullet$ If $0\leq r_1<a-s$,  we have
$N_{dr}=(\lfloor\frac{r}{a+1}\rfloor+r_1)ha^2+rd$.

Then for $n(A)$, we have
\begin{small}
\begin{align*}
\sum_{r=1}^{a^2-1}N_{dr}&=\sum_{r=1}^{a^2-1}\Big((\lfloor \frac{r}{a+1}\rfloor+1)ha^2+rd\Big)+\sum_{r=1\atop 0\leq r_1< a-s}^{a^2-1}(r_1-1)ha^2
\\&=ha^2((1+\cdots +(a-1))(a+1)+a-1)+\sum_{r=1}^{a^2-1}rd+ha^2\Big(\sum_{m=0}^{a-1}\sum_{i=0}^{a-m-1}i\Big)-ha^2\Big(\sum_{r=1\atop 0\leq r_1< a-s}^{a^2-1}1-1\Big)
\\&=ha^2\Big(\frac{(a-1)a(a+1)}{2}+a\Big)+\frac{da^2(a^2-1)}{2}+ha^2\Big(\sum_{m=0}^{a-1}\frac{(a-m)(a-m-1)}{2}\Big)
-\frac{ha^3(a+1)}{2}
\\&=\frac{2}{3}ha^5+\frac{1}{2}(d-h)a^4-\frac{1}{6}ha^3-\frac{1}{2}da^2,
\end{align*}
\end{small}
which implies that
\begin{align*}
n(A)=\frac{1}{a^2}\sum_{r=1}^{a^2-1}N_{dr}-\frac{a^2-1}{2}
=\frac{2}{3}ha^3+\frac{1}{2}(d-h-1)a^2-\frac{1}{6}ha+\frac{1}{2}(1-d).
\end{align*}

For $s(A)$, we omit some calculations and obtain
\begin{small}
\begin{align*}
\sum_{r=1}^{a^2-1}N_{dr}^2&=\sum_{m=0}^{a-1}\sum_{i=0}^{a-m-1}((m+i)ha^2+(m(a+1)+i)d)^2
+\sum_{m=0}^{a-2}\sum_{i=0}^{m}((m+1)ha^2+((m+1)a+i)d)^2
\\&=\frac{a^2}{12}(6h^2a^6+(9dh-8h^2)a^5+(4d^2-5dh)a^4+(2h^2-11dh)a^3+(5dh-6d^2)a^2+2dha+2d^2),
\end{align*}
\end{small}
which implies that
\begin{small}
\begin{align*}
s(A)&=\frac{1}{2a^2}\sum_{r=1}^{a^2-1}N_{dr}^2-\frac{1}{2}\sum_{r=1}^{a^2-1}N_{dr}+\frac{a^4-1}{12}
\\&=\frac{1}{24}\big(6h^2a^6+(9dh-8h^2-8h)a^5+(4d^2-5dh-6d+6h+2)a^4+(2h^2-11dh+2h)a^3
\\&\ \ \ \ +(5dh-6d^2+6d)a^2+2dha+2d^2-2\big).
\end{align*}
\end{small}
The proof of the theorem is now complete.
\end{proof}

Further more, we can calculate
\begin{small}
\begin{align*}
\sum_{r=1}^{a^2-1}N_{dr}^p&=\sum_{m=0}^{a-1}\sum_{i=0}^{a-m-1}\big((m+i)ha^2+(m(a+1)+i)d\big)^p
+\sum_{m=0}^{a-2}\sum_{i=0}^{m}\big((m+1)ha^2+((m+1)a+i)d\big)^p,
\end{align*}
\end{small}
thus obtain a formula for $s_{\mu}(A)$, but such a formula is complicated.

\subsection{The Case of $A=(a,ha-d, ha+d)$}
We conclude this subsection by the following theorem with possible negative $M$.
\begin{thm}\label{wehaha}
Let $A=(a, ha-d, ha+d)$,  $d, h\in \mathbb{P}, \gcd(a, d)=1$, $ha-d>1$ and $s=\lfloor \frac{ha-d}{2h}\rfloor$. Then:
\begin{align*}
g(A)&=\max\Big\lbrace \lfloor \dfrac{ha-d}{2h}\rfloor(ha+d)-a, \big(a-\lceil \dfrac{ha-d}{2h}\rceil\big)(ha-d)-a\Big\rbrace,\\
n(A)&=\frac{ha+d}{2a}s(s+1)+\frac{ha-d}{2a}(a-s)
(a-s-1)-\frac{a-1}{2},\\
s(A)&=\Big(\frac{(ha+d)^2(2s+1)}{12a}-\frac{ha+d}{4}\Big)s(s+1)
\\&\ \ \ \ +\Big(\frac{(ha-d)^2(2a-2s-1)}{12a}-\frac{ha-d}{4}\Big)(a-s)
(a-s-1)+\frac{a^2-1}{12},\\
s_{\mu}(A)&=\frac{1}{\mu+1}\sum_{\kappa=0}^{\mu}\binom{\mu+1}{\kappa}\mathcal{B}_{\kappa}a^{\kappa-1}
\Big((ha+d)^{\mu+1-\kappa}\sum_{r=0}^{s}r^{\mu+1-\kappa}
+(ha-d)^{\mu+1-\kappa}\sum_{r=1}^{a-s-1}r^{\mu+1-\kappa}\Big)
\\&\ \ \ \ +\frac{\mathcal{B}_{\mu+1}}{\mu+1}(a^{\mu+1}-1).
\end{align*}
where $\mu$ is a positive integer, $\mathcal{B}_{\kappa}$ is the Bernoulli number.
\end{thm}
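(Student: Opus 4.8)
The plan is to recognize this as the case $A=(a, ha+dB)$ with $B=(-1,1)$, so that the associated optimization problem is $O_B(M)=\min\{x_1+x_2 \mid -x_1+x_2=M\}=|M|$, which now genuinely requires signed $M$. First I would redo the computation behind Lemma \ref{0202} directly from the definition: writing $a_0=(ha-d)x_1+(ha+d)x_2=ha(x_1+x_2)+d(x_2-x_1)$ and using $\gcd(a,d)=1$, the congruence $a_0\equiv dr \bmod a$ becomes $x_2-x_1\equiv r \bmod a$. Setting $M=x_2-x_1\in\mathbb{Z}$ and minimizing $x_1+x_2=O_B(M)=|M|$ for each fixed $M\equiv r \bmod a$, I obtain $N_{dr}=\min_{m\in\mathbb{Z}}\{ha|ma+r|+d(ma+r)\}$.

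Next I would analyze the piecewise-linear function $f(M)=ha|M|+dM$ over the residue class $M\equiv r \bmod a$. Because $ha-d>1>0$ forces $ha>d>0$, the function has slope $ha+d>0$ on $M\ge 0$ and value $(d-ha)M>0$ increasing toward $0$ on $M<0$; hence its minimum on the class is attained at one of the two representatives nearest $0$. For $1\le r\le a-1$ these are $M=r$ and $M=r-a$, giving the clean two-candidate formula $N_{dr}=\min\{(ha+d)r,\ (ha-d)(a-r)\}$. Comparing candidates, $(ha+d)r\le (ha-d)(a-r)$ is equivalent to $r\le \frac{ha-d}{2h}$, so with $s=\lfloor \frac{ha-d}{2h}\rfloor$ I get $N_{dr}=(ha+d)r$ for $1\le r\le s$ and $N_{dr}=(ha-d)(a-r)$ for $s+1\le r\le a-1$.

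From this explicit form everything follows by Lemma \ref{LiuXin001}, using Proposition \ref{0201} to replace the sums and maximum over $N_r$ by the same over $N_{dr}$. The Frobenius number comes from maximizing each branch: the first branch peaks at $r=s$ and the second at $r=\lceil \frac{ha-d}{2h}\rceil$, yielding the stated maximum of $s(ha+d)-a$ and $(a-\lceil \frac{ha-d}{2h}\rceil)(ha-d)-a$. For $n(A)$, $s(A)$, $s_\mu(A)$ I would split $\sum_{r=1}^{a-1}N_{dr}^p$ into the two ranges, reindex the second by $j=a-r$, and apply the standard power sum $\sum_{r=1}^{s}r^p$, obtaining $\sum_{r=1}^{a-1}N_{dr}^p=(ha+d)^p\sum_{r=1}^{s}r^p+(ha-d)^p\sum_{j=1}^{a-s-1}j^p$; specializing to $p=1,2$ and to general $p=\mu+1-\kappa$ and inserting into Lemma \ref{LiuXin001} then gives the three formulas.

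The main obstacle is conceptual rather than computational: the reduction of Lemma \ref{0202} was stated for $m\in\mathbb{N}$, whereas here the defining quantity $M=x_2-x_1$ is genuinely signed, so I must re-justify the minimization over all of $\mathbb{Z}$ and verify that only the two residue representatives nearest $0$ can be optimal. The second delicate point is the boundary $r=\frac{ha-d}{2h}$ when $2h\mid (ha-d)$, where the two branches coincide; tracking this is exactly what forces the floor in the first term of $g(A)$ and the ceiling in its second, and the same care is needed to ensure the ranges $1\le r\le s$ and $s+1\le r\le a-1$ partition $\{1,\dots,a-1\}$ correctly in the power-sum computations.
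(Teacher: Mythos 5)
Your proposal is correct and follows essentially the same route as the paper: the paper likewise solves $O_B(M)=\min\{x_1+x_2\mid x_2-x_1=M\}$ over signed $M=ma+r$, obtains the two candidates $T_1=(ha+d)r$ (at $m=0$) and $T_2=(ha-d)(a-r)$ (at $m=-1$), compares them via $r\gtrless (ha-d)/2h$, and then evaluates the maximum and the split power sums exactly as you do. Your explicit re-justification of the reduction for $m\in\mathbb{Z}$ (via $O_B(M)=|M|$ and minimality at the two residue representatives nearest $0$) is a point the paper glosses over, but it is the same underlying argument, not a different method.
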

\begin{proof}
By Lemma \ref{0202}, we need to solve $O_B(M)=\min\{x_1+x_2 \mid x_2-x_1=M\}$ for $M=ma+r, m\in \mathbb{Z}$. We have to distinguish two cases: 1) If $m\ge 0$,  then $x_1+x_2=2x_1+M$ minimizes to $M$ at $x_1=0$,  so that $N_{dr}$ minimizes to $T_1:=(ha+d)r$ at $m=0, \ x_1=0$. 2) If $m<0$ then $M<0$ and $x_1+x_2=2x_2-M$ minimizes to $-M$ at $x_2=0$,  so that $N_{dr}$ minimizes to $T_2:=(ha-d)(a-r)$ at $x_2=0, \ m=-1$.

Solving $T_1-T_2\geq 0$ gives $r\geq (ha-d)/2h$. Thus  $N_{dr}=T_2$
for $r\geq (ha-d)/2h$,  and $N_{dr}=T_1$ for otherwise. Obviously $0< (ha-d)/2h< a$.    Therefore we have
$$\begin{aligned}
\left\{
\begin{aligned}
\mathop{\max}\limits_{r\in \lbrace 0, 1, ..., s\rbrace} \lbrace N_{dr}\rbrace &=
\lfloor \dfrac{ha-d}{2h}\rfloor(ha+d)\ & \text{if}\  & s\leq (ha-d)/2h, \\
\mathop{\max}\limits_{r\in \lbrace s, ..., a-1\rbrace} \lbrace N_{dr}\rbrace &=\big(a-\lceil \dfrac{ha-d}{2h}\rceil\big)(ha-d)\ & \text{if}\  & s\geq (ha-d)/2h.
\end{aligned}
\right.
\end{aligned}$$
Finally by $g(x)=\max\{ N_{dr}\}-a$, we get the formula of $g(A)$ in this theorem.

For $n(A)$ and $s(A)$:

$\bullet$ If $r\leq (ha-d)/2h$,  we have
$N_{dr}=T_1=(ha+d)r$.

$\bullet$ If $r\geq(ha-d)/2h$,  we have
$N_{dr}=T_2=(ha-d)(a-r)$.

Let $s=\lfloor \frac{ha-d}{2h}\rfloor$.  Thus
\begin{small}
\begin{align*}
\sum_{r=1}^{a-1}N_{dr}&=\sum_{r=1}^{s}(ha+d)r+\sum_{r=s+1}^{a-1}(ha-d)(a-r)
\\&=\frac{ha+d}{2}s(s+1)+\frac{ha-d}{2}(a-s)(a-s-1),
\end{align*}
\end{small}
and
\begin{small}
\begin{align*}
\sum_{r=1}^{a-1}N_{dr}^2&=\sum_{r=1}^{s}(ha+d)^2r^2+\sum_{r=s+1}^{a-1}(ha-d)^2(a-r)^2
\\&=\frac{(ha+d)^2}{6}s(s+1)(2s+1)+\frac{(ha-d)^2}{6}(a-s-1)
(a-s)(2a-2s-1).
\end{align*}
\end{small}
By Lemma \ref{LiuXin001}, we have
\begin{small}
\begin{align*}
n(A)&=\frac{1}{a}\sum_{r=1}^{a-1}N_{dr}-\frac{a-1}{2}=\frac{ha+d}{2a}s(s+1)+\frac{ha-d}{2a}
(a-s)(a-s-1)-\frac{a-1}{2},
\end{align*}
\end{small}
and
\begin{small}
\begin{align*}
s(A)&=\frac{1}{2a}\sum_{r=1}^{a-1}N_{dr}^2-\frac{1}{2}\sum_{r=1}^{a-1}N_{dr}+\frac{a^2-1}{12}
\\&=\Big(\frac{(ha+d)^2(2s+1)}{12a}-\frac{ha+d}{4}\Big)s(s+1)
\\&+\Big(\frac{(ha-d)^2(2a-2s-1)}{12a}-\frac{ha-d}{4}\Big)
(a-s)(a-s-1)+\frac{a^2-1}{12}.
\end{align*}
\end{small}

Further more, we have
\begin{small}
\begin{align*}
\sum_{r=1}^{a-1}N_{dr}^p&=\sum_{r=1}^{s}(ha+d)^pr^p+\sum_{r=s+1}^{a-1}(ha-d)^p(a-r)^p
\\&=(ha+d)^p\sum_{r=1}^{s}r^p+(ha-d)^p\sum_{r=1}^{a-s-1}r^p.
\end{align*}
\end{small}
thus obtain a formula for
\begin{small}
\begin{align*}
&s_\mu(A)=\frac{1}{\mu+1}\sum_{\kappa=0}^{\mu}\binom{\mu+1}{\kappa}B_{\kappa}a^{\kappa-1}
\sum_{r=1}^{a-1}N_r^{\mu+1-\kappa}
+\frac{B_{\mu+1}}{\mu+1}(a^{\mu+1}-1)\,\\
&=\frac{1}{\mu+1}\sum_{\kappa=0}^{\mu}\binom{\mu+1}{\kappa}B_{\kappa}a^{\kappa-1}
\Big((ha+d)^{\mu+1-\kappa}\sum_{r=0}^{s}r^{\mu+1-\kappa}
+(ha-d)^{\mu+1-\kappa}\sum_{r=1}^{a-s-1}r^{\mu+1-\kappa}\Big)
\\&\ \ \ \ +\frac{B_{\mu+1}}{\mu+1}(a^{\mu+1}-1).
\end{align*}
\end{small}
This completes the proof.
\end{proof}

\section{Frobenius Formula for Almost Arithmetic Sequences}
In this section,  we will calculate the formula $g(A), n(A), s(A)$ of almost arithmetic sequences.
We give a short proof of the following theorem and then establish some new formulas using the same idea.
\begin{thm}[\cite{E. S. Selmer},\cite{T.Komatsu2022Arx}]\label{lsf}
Let $A=(a, ha+d, ha+2d, ..., ha+kd)$,  $(a, d)=1$,  $a, h, d, k\in \mathbb{P}$ and $1\leq k\leq a-1$. Suppose $a-1=sk+r_1$, $s\geq 0$ and $0\leq r_1\leq k-1$, then
\begin{small}
\begin{align*}
g(A)&=ha\Big(\lfloor\dfrac{a-2}{k}\rfloor+1\Big)+(d-1)(a-1)-1,\\
n(A)&=h\Big(\lfloor \frac{a-1}{k}\rfloor+1\Big)\Big(a-1-\frac{k}{2}\lfloor\frac{a-1}{k}\rfloor\Big)
+\frac{(d-1)(a-1)}{2},\\
s(A)&=\frac{a(s+1)h^2}{6}\big(ks^2+\frac{1}{2}ks+3sr_1+3r_1\big)+\frac{1}{6}(a-1)(d-1)
\big(ad-\frac{d}{2}-\frac{a}{2}-\frac{1}{2}\big)
\\&\ \ \ -\frac{h(s+1)}{4}\Big(-\frac{4}{3}dk^2s^2+\big(\frac{kd}{3}-4dr_1-d+a\big)ks+2r_1(-r_1d-d+a)\Big),\\
s_\mu(A)&=\frac{1}{\mu+1}\sum_{\kappa=0}^{\mu}\binom{\mu+1}{\kappa}B_{\kappa}a^{\kappa-1}
\sum_{r=1}^{a-1}\big(ha\lceil \frac{r}{k}\rceil+dr\big)^{\mu+1-\kappa}
+\frac{B_{\mu+1}}{\mu+1}(a^{\mu+1}-1).
\end{align*}
\end{small}
\end{thm}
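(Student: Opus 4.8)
The plan is to apply the reduction in Lemma \ref{0202} with $B=(1,2,\dots,k)$, for which $O_B(M)$ is solved at once by the greedy algorithm. To minimize $\sum_{i=1}^k x_i$ subject to $\sum_{i=1}^k i\,x_i=M$ one uses as many copies of the largest generator $k$ as possible: writing $M=k\lfloor M/k\rfloor+(M\bmod k)$, the remainder $M\bmod k$, if nonzero, is a legitimate generator and is covered by a single additional part. Since each part contributes at most $k$ to $M$, at least $\lceil M/k\rceil$ parts are needed, and this count is achieved, so $O_B(M)=\lceil M/k\rceil$. By \eqref{e-hd-11} the intermediate function is therefore
$$N_{dr}(m)=ha\Big\lceil\frac{ma+r}{k}\Big\rceil+(ma+r)d.$$

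First I would show $N_{dr}(m)$ is increasing in $m$, so that $N_{dr}=N_{dr}(0)$. Increasing $M$ by $a>0$ cannot decrease $\lceil M/k\rceil$, so $N_{dr}(m+1)-N_{dr}(m)$ equals $ha$ times a nonnegative ceiling increment plus the strictly positive term $ad$, hence is positive. This gives the closed form $N_{dr}=ha\lceil r/k\rceil+rd$. Both $\lceil r/k\rceil$ and $rd$ are nondecreasing in $r$, so $\max_{0\le r\le a-1}N_{dr}$ is attained at $r=a-1$, yielding $\max N_{dr}=ha\lceil(a-1)/k\rceil+(a-1)d$. Using the identity $\lceil n/k\rceil=\lfloor(n-1)/k\rfloor+1$ for $n\ge1$ (valid since $a\ge k+1\ge 2$) together with $g(A)=\max N_{dr}-a$ from Lemma \ref{LiuXin001}, and noting $(a-1)d-a=(d-1)(a-1)-1$, reproduces the stated formula for $g(A)$.

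For $n(A)$ and $s(A)$ I would compute the power sums $\sum_{r=1}^{a-1}N_{dr}^{p}$ by expanding $(ha\lceil r/k\rceil+rd)^p$ and reducing everything to the prototype sums $\sum r$, $\sum r^2$, $\sum\lceil r/k\rceil$, $\sum\lceil r/k\rceil^2$, and $\sum r\lceil r/k\rceil$. The ceiling sums are handled by a block decomposition: writing $a-1=sk+r_1$ with $0\le r_1\le k-1$, the range of $r$ splits into $s$ complete blocks on which $\lceil r/k\rceil$ is constant (equal to $j$ on the $j$th block) followed by a partial block of length $r_1$ on which it equals $s+1$. This gives $\sum_{r=1}^{a-1}\lceil r/k\rceil=(s+1)\big(\tfrac{ks}{2}+r_1\big)$, and since $r_1=a-1-sk$ the bracket collapses to $a-1-\tfrac{ks}{2}$, which is precisely the factor appearing in the claimed $n(A)$ after dividing by $a$ and subtracting $\tfrac{a-1}{2}$.

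The main obstacle is the weighted sum $\sum_{r=1}^{a-1}r\lceil r/k\rceil$ needed for $s(A)$: each block contributes $j$ times an arithmetic progression of length $k$, and the partial block of length $r_1$ must be tracked separately, so the interplay between the $s$-dependent and $r_1$-dependent pieces is where the algebra is heaviest. Assembling this with $\sum r^2$ and $\sum\lceil r/k\rceil^2=k\tfrac{s(s+1)(2s+1)}{6}+(s+1)^2 r_1$, then substituting into $s(A)=\tfrac1{2a}\sum N_{dr}^2-\tfrac12\sum N_{dr}+\tfrac{a^2-1}{12}$ from Lemma \ref{LiuXin001} and regrouping by powers of $h$ and $d$, should yield the stated expression. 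Finally, $s_\mu(A)$ follows immediately by inserting $N_{dr}=ha\lceil r/k\rceil+rd$ into the Sylvester power-sum identity of Lemma \ref{LiuXin001}, with no further simplification attempted, consistent with the unsimplified form quoted in the theorem.
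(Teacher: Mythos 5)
Your proposal is correct and takes essentially the same route as the paper's own proof: the reduction via Lemma \ref{0202} with $B=(1,2,\dots,k)$ to $O_B(M)=\lceil M/k\rceil$, monotonicity of $N_{dr}(m)$ in $m$ giving $N_{dr}=ha\lceil r/k\rceil+dr$, the maximum at $r=a-1$ for $g(A)$, and the block decomposition $a-1=sk+r_1$ for the ceiling sums $\sum\lceil r/k\rceil$, $\sum\lceil r/k\rceil^2$, $\sum r\lceil r/k\rceil$ fed into Lemma \ref{LiuXin001}. Your explicit lower-bound argument for the greedy optimality and for the monotonicity in $m$ simply fills in details the paper leaves implicit.
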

\begin{proof}
By Lemma \ref{0202},  for a given $r$,  we have
\begin{align*}
N_{dr}=\min_{m\in\mathbb{N}} \{O_B(ma+r)\cdot ha+d\cdot (ma+r)\},
\end{align*}
where $O_B(M)=\min\{\sum_{i=1}^kx_i \mid \sum_{i=1}^ki\cdot x_i=M\}$.

If $M=s\cdot k+t$ with $1\leq t\leq k$,  then $O_B(M)=s+1$,  which
minimizes at  $x_k=s, x_t=1, x_i=0, (i\neq k, t)$ when $k\neq t$,
and minimizes at $x_k=s+1, x_i=0, (i\neq k)$ when $k=t$.

Now $N_{dr}(m)=(s+1)ha+d(ma+r)=ha\lceil  \frac{ma+r}{k} \rceil +d(ma+r) $ is increasing with respect to $m$. It follows that
$$N_{dr}=N_{dr}(0)=ha\lceil  \frac{r}{k} \rceil +dr.$$
Using this explicit formula, we can compute $g(A)$, $n(A)$ and $s(A)$ as follows.

For $g(A)$, notice $N_{dr}$ is increasing with respect to $r$. Hence
$$\mathop{\max}\limits_{r\in \lbrace 0, 1, ..., a-1\rbrace} \lbrace N_{dr}\rbrace =N_{d(a-1)}= ha\Big(\lfloor \dfrac{a-2}{k}\rfloor+1\Big)+d(a-1).$$

For $n(A)$, we have
\begin{align*}
\sum_{r=1}^{a-1}N_{dr}&=ha\sum_{r=1}^{a-1}\lceil\frac{r}{k}\rceil+\sum_{r=1}^{a-1}dr
\\&=ha((1+2+\cdots +s)k+(s+1)r_1)+d\frac{a(a-1)}{2}
\\&=ha\Big(\frac{s(s+1)k}{2}+(s+1)r_1\Big)+\frac{da(a-1)}{2}
\\&=ha\Big(\lfloor \frac{a-1}{k}\rfloor+1\Big)\Big(a-1-\frac{k}{2}\lfloor\frac{a-1}{k}\rfloor\Big)+\frac{da(a-1)}{2}.
\end{align*}
Applying Lemma \ref{LiuXin001} gives
\begin{align*}
n(A)=h\Big(\lfloor \frac{a-1}{k}\rfloor+1\Big)\Big(a-1-\frac{k}{2}\lfloor\frac{a-1}{k}\rfloor\Big)
+\frac{(d-1)(a-1)}{2}.
\end{align*}

For $s(A)$, we have
\begin{small}
\begin{align*}
\sum_{r=1}^{a-1}N_{dr}^2&=h^2a^2\sum_{r=1}^{a-1}(\lceil\frac{r}{k}\rceil)^2
+\sum_{r=1}^{a-1}d^2r^2+2had\sum_{r=1}^{a-1}r (\lceil\frac{r}{k}\rceil)
\\&=h^2a^2\Big(\sum_{m=1}^skm^2+(s+1)^2r_1\Big)+\frac{ad^2(a-1)(2a-1)}{6}
+2had\Bigg(\sum_{m=1}^s\sum_{r=(m-1)k+1}^{mk}mr+\sum_{r=sk+1}^{sk+r_1}(s+1)r\Bigg)
\\&={h}^{2}{a}^{2} \left( {\frac {s \left( s+1 \right)  \left( 2\,s+1
 \right) k}{6}}+ \left( s+1 \right) ^{2}{r_1} \right) +{\frac {a{d}
^{2} \left( a-1 \right)  \left( 2\,a-1 \right) }{6}}
\\&\ \ \ +2had\Big(-\frac{3k^2(s+1)^2}{4}+\frac{5(s+1)k^2}{12}+\frac{k(s+1)^2}{4}-\frac{(s+1)k^2}{4}
\\&\ \ \ +\frac{k^2(s+1)^3}{3}+\frac{r_1(s+1)(2sk+r_1+1)}{2}\Big).
\end{align*}
\end{small}
Applying Lemma \ref{LiuXin001} gives
\begin{small}
\begin{align*}
s(A)&=\frac{a(s+1)h^2}{6}\big(ks^2+\frac{1}{2}ks+3sr_1+3r_1\big)+\frac{1}{6}(a-1)(d-1)
\big(ad-\frac{d}{2}-\frac{a}{2}-\frac{1}{2}\big)
\\&-\frac{h(s+1)}{4}\Big(-\frac{4}{3}dk^2s^2+(\frac{kd}{3}-4dr_1-d+a)ks+2r_1(-r_1d-d+a)\Big).
\end{align*}
\end{small}

The formula for $s_\mu(A)$ is a direct consequence of Lemma \ref{LiuXin001}.
\end{proof}

Next we extend a recent result (corresponding to our $h=1$ case) of Takao Komatsu \cite{T.Komatsu22Arx}.
\begin{thm}
Let $A=(a, ha+(K+1)d, ha+(K+2)d, ..., ha+kd)$,  where $\gcd(a, d)=1$,  $a, h, d, K, k\in \mathbb{P}$, $K\leq \frac{k-1}{2}$ and $a\geq 2$. Let $a+K=qk+r_1$, $q\geq 0$ and $0\leq r_1\leq k-1$.
Then
$$\begin{aligned}
g(A)&=ha\cdot \lceil\frac{a+K}{k}\rceil+d(a+K)-a,\\
n(A)&=\left\{
\begin{aligned}
&h\Big((q+1)\big(\frac{qk}{2}+r_1-1\big)-K\Big)+\frac{(a-1)(d-1)}{2}+Kd\ & \text{if} &\ \ K+1\leq r_1\leq k, \\
&h\Big((q+1)\big(\frac{qk}{2}+r_1\big)-K-q\Big)+\frac{(a-1)(d-1)}{2}+Kd\ & \text{if} &\ \ 0\leq r_1\leq K. \\
\end{aligned}
\right.
\end{aligned}$$
\end{thm}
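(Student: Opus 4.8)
The plan is to follow the paper's scheme: apply Lemma \ref{0202} with $B=(K+1,K+2,\dots,k)$, reducing $g(A)$ and $n(A)$ to the optimization problem
$$O_B(M)=\min\Big\{\sum_{i=K+1}^{k} x_i \mid \sum_{i=K+1}^{k} i\,x_i = M\Big\},$$
then analyze the intermediate function $N_{dr}(m)$ of \eqref{e-hd-11} and read off the two statistics.

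First I would solve $O_B(M)$. Since the coin values $K+1,\dots,k$ are consecutive integers, using exactly $N$ coins one realizes \emph{precisely} the integers of the interval $[N(K+1),Nk]$ (start from $N$ copies of $K+1$ and raise one coin at a time). Hence a positive $M$ is representable iff $M\in\bigcup_{N\ge 1}[N(K+1),Nk]$, and the hypothesis $K\le\frac{k-1}{2}$ is exactly what makes consecutive intervals abut: the no-gap condition $(N+1)(K+1)\le Nk+1$ reads $K\le N(k-K-1)$, which already holds at $N=1$ and hence for all $N\ge1$. Therefore the non-representable positive integers are precisely $1,\dots,K$, every $M\ge K+1$ is representable, and for such $M$ one has $O_B(M)=\lceil M/k\rceil$ (the required inequality $\lceil M/k\rceil(K+1)\le M$ follows once more from the no-gap condition). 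This representability analysis, powered by $K\le\frac{k-1}{2}$, is the main conceptual obstacle.

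Next, writing $N_{dr}(m)=\lceil (ma+r)/k\rceil\,ha+(ma+r)d$, both summands are non-decreasing in $m$ and the second strictly so, so $N_{dr}(m)$ is strictly increasing and $N_{dr}$ is attained at the smallest $m$ for which $ma+r$ is representable. For $K+1\le r\le a-1$ this is $m=0$, giving $N_{dr}=\lceil r/k\rceil ha+rd$; for $1\le r\le K$ it is $m=1$ (here $a+r\ge K+1$, using $a\ge K$), giving $N_{dr}=\lceil (a+r)/k\rceil ha+(a+r)d$. Since each of these expressions is increasing in its argument, the only candidate maxima are $N_{d(a-1)}$ and $N_{dK}=\lceil(a+K)/k\rceil ha+(a+K)d$; because $N_{dK}-N_{d(a-1)}=(\lceil(a+K)/k\rceil-\lceil(a-1)/k\rceil)ha+(K+1)d>0$, the maximum is $N_{dK}$, so $g(A)=N_{dK}-a$, the asserted formula.

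For $n(A)$ I would use the clean re-indexing that the multiset $\{N_{dr}:1\le r\le a-1\}$ equals $\{\lceil v/k\rceil ha+vd: v\in\{K+1,\dots,a+K\}\setminus\{a\}\}$, taking $v=r$ on the large block and $v=a+r$ on the small block. Then
$$\sum_{r=1}^{a-1}N_{dr}=ha\sum_{v=K+1}^{a+K}\Big\lceil \tfrac{v}{k}\Big\rceil+d\sum_{v=K+1}^{a+K}v-\Big(ha\Big\lceil \tfrac{a}{k}\Big\rceil+ad\Big),$$
and with $a+K=qk+r_1$ one has $\sum_{v=1}^{a+K}\lceil v/k\rceil=k\binom{q+1}{2}+(q+1)r_1$ while $\sum_{v=1}^{K}\lceil v/k\rceil=K$. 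The case split in the statement arises \emph{solely} from $\lceil a/k\rceil$, which equals $q+1$ when $r_1\ge K+1$ and $q$ when $r_1\le K$. Substituting into $n(A)=\frac1a\sum_{r=1}^{a-1}N_{dr}-\frac{a-1}{2}$ from Lemma \ref{LiuXin001} and simplifying (the $d$-part collapses to $\frac{(a-1)(d-1)}{2}+Kd$) yields the two stated expressions. Beyond the $O_B(M)$ analysis, the only delicate point is the hypothesis guaranteeing $m=1$ suffices for small $r$ (i.e. $a\ge K$); the remaining manipulation is routine arithmetic.
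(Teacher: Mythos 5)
Your proposal is correct and follows the paper's own scheme: the same reduction via Lemma \ref{0202}, the same solution of the optimization problem (the paper writes $O_B(ks+t)=s+1$ for $1\le t\le k$ and exhibits explicit minimizers coin-by-coin, while you derive $O_B(M)=\lceil M/k\rceil$ from the no-gap interval argument on $[N(K+1),Nk]$ powered by $K\le\frac{k-1}{2}$ --- equivalent content in slightly different packaging), the same piecewise formula $N_{dr}=ha\lceil r/k\rceil+dr$ for $K+1\le r\le a-1$ versus $N_{dr}=ha\lceil (a+r)/k\rceil+d(a+r)$ for $1\le r\le K$, and the same identification of the maximum at $N_{dK}$ (you make the comparison $N_{dK}-N_{d(a-1)}>0$ explicit where the paper just cites monotonicity). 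The one place you go genuinely beyond the paper is $n(A)$: the paper omits that computation entirely, deferring to Komatsu's process, whereas your re-indexing of the multiset $\{N_{dr}: 1\le r\le a-1\}$ as $\{\lceil v/k\rceil ha+vd : v\in\{K+1,\dots,a+K\}\setminus\{a\}\}$, together with the observation that the case split arises solely from $\lceil a/k\rceil\in\{q,q+1\}$ according as $r_1\ge K+1$ or $r_1\le K$, gives a clean derivation; I checked that it reproduces both stated expressions exactly. You are also right to flag the implicit hypothesis $a\ge K$ (needed so that $m=1$ already makes $a+r$ representable for every $1\le r\le K$): the paper's proof uses this silently, so your explicit mention is a small improvement in rigor rather than a gap.
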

\begin{proof}
By Lemma \ref{0202},  for a given $r$,  we have
\begin{align*}
N_{dr}&=\min \{O_B(ma+r)\cdot ha+d(ma+r)\mid m\in\mathbb{N}\},
\end{align*}
where $O_B(M)=\min\big\{\sum_{i=1}^{k-K}x_i \mid \sum_{i=1}^{k-K}(K+i)\cdot x_i=M\big\}$.

If $M=ma+r=k\cdot s+t$,  where $s\geq 0, 1\leq t\leq k$.
When $K+1\leq t\leq k$,  we have $O_B(M)=s+1$,  which minimizes at $x_{k-K}=s,  x_{t-K}=1,  x_i=0, (i\neq k-K, t-K), k\neq t$ and $x_{k-K}=s+1, x_i=0, (i\neq k-K), k=t$.
When $1\leq t\leq K$, there is no solution for $s=0$. For $s\geq 1$,  we have $ma+r=(s-1)k+k+t$ and $O_B(M)=s+1$,  which minimizes at $x_{k-K}=s-1,  x_{k+t-2K-1}=1,  x_1=1,  x_i=0,  (i\neq 1, k+t-2K-1, k-K)$.
Therefore, we have
$$N_{dr}(m)=(s+1)ha+d(ma+r)=\lceil \frac{ma+r}{k}\rceil\cdot ha+d(ma+r),$$
which is increasing with respect to $m$. Therefore $N_{dr}=N_{dr}(0)$ for $K+1\leq r\leq a-1$ and $N_{dr}=N_{dr}(1)$ for $1\leq r\leq K$
(note that $N_{dr}(0)$ does not exit in this case). Thus we have
$$\begin{aligned}
N_{dr}=
\left\{
\begin{aligned}
&ha\lceil \frac{a+r}{k}\rceil+d(a+r)\ & \text{if} & \ 1\leq r\leq K, \\
&ha\lceil \frac{r}{k}\rceil+dr\ & \text{if} & \ K+1\leq r\leq a-1. \\
\end{aligned}
\right.
\end{aligned}$$

This explicit formula allow us to compute $g(A),n(A),s(A)$ easily. We only work out the $g(A)$ case, the other two cases
are omitted since the process is similar to that of Takao Komatsu.

Since $N_{dr}$ is increasing with respect to $r$, we have $\max\{N_{dr}\}=\max \{ N_{dK}, N_{d(a-1)}\} =N_{dK}$. This gives
\begin{align*}
g(A)=\max\{N_{dr}\}-a= \lceil\frac{a+K}{k}\rceil \cdot ha+d(a+K)-a.
\end{align*}
\end{proof}

Next,  we construct some interesting sequences and find their Frobenius formulas.
\begin{thm}\label{13579ss1}
Let $A=(a, ha+d, ha+3d, ha+5d, ..., ha+(2k+1)d)$,  where $(a, d)=1$,  $a, h, d, k\in \mathbb{P}$, $a>2$, $3\leq 2k+1\leq a-1$. And let $a-1=(2k+1)s+t$,  where $1\leq t\leq 2k+1$.
Then when $t\equiv 0\mod2$,  we have
$$g(A)=ha\Big(\lfloor \dfrac{a-2}{2k+1}\rfloor +2\Big)+(a-1)d-a.$$
When $t\equiv 1\mod2$,  we have
$$g(A)=\max \Big\{ ha\Big(\lfloor \dfrac{a-2}{2k+1}\rfloor +1\Big)+(a-1)d-a, \ ha\Big(\lfloor \dfrac{a-3}{2k+1}\rfloor +2\Big)+(a-2)d-a\Big\}.$$
Further more, we have
$$n(A)=hs\big(ks+t+\frac{1}{2}s-\frac{1}{2}\big)+(a-1)\big(\frac{d}{2}+h-\frac{1}{2}\big)
+h\lfloor\frac{t}{2}\rfloor.$$
\end{thm}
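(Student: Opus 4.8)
The plan is to follow the scheme of Lemma \ref{0202}: reduce $N_{dr}$ to the optimization problem $O_B(M)$ for $B=(1,3,5,\dots,2k+1)$, solve $O_B(M)$, show $N_{dr}(m)$ is increasing in $m$, and then extract $g(A)$ and $n(A)$ from the closed form $N_{dr}=N_{dr}(0)$. The decisive feature of $O_B(M)=\min\{\sum_i x_i : \sum_{i=0}^{k}(2i+1)x_{i+1}=M\}$ is \emph{parity}: every part is odd, so $\sum_i x_i\equiv M\pmod 2$, i.e. the number of parts has the same parity as $M$. Writing $M=(2k+1)q+\rho$ with $0\le\rho\le 2k$, the bound ``each part $\le 2k+1$'' gives $O_B(M)\ge\lceil M/(2k+1)\rceil$, and the parity constraint forces one extra part exactly when $\lceil M/(2k+1)\rceil\not\equiv M\pmod 2$. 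A filling argument (start from $N$ ones and repeatedly replace a value $v$ by $v+2$) shows $M$ is a sum of exactly $N$ odd parts in $[1,2k+1]$ whenever $N\equiv M\pmod 2$ and $N\le M\le N(2k+1)$; this yields
\[
O_B(M)=\begin{cases} q,&\rho=0,\\ q+1,&\rho\text{ odd},\\ q+2,&\rho\text{ even},\ \rho\ne 0,\end{cases}
\]
equivalently $O_B(M)=\lceil M/(2k+1)\rceil+\varepsilon_M$ with $\varepsilon_M=1$ iff $M\bmod(2k+1)$ is a nonzero even number.

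Next I would show $N_{dr}(m)=ha\,O_B(ma+r)+(ma+r)d$ increases in $m$. Since $a\ge 2k+2>2k+1$ we have $\lfloor a/(2k+1)\rfloor\ge 1$, hence $O_B(M+a)\ge\lceil(M+a)/(2k+1)\rceil\ge\lceil M/(2k+1)\rceil+1\ge O_B(M)$. Therefore $N_{dr}(m+1)-N_{dr}(m)=ha\bigl(O_B(ma+r+a)-O_B(ma+r)\bigr)+ad\ge ad>0$, so by Lemma \ref{0202} we obtain the closed form $N_{dr}=N_{dr}(0)=ha\,O_B(r)+dr$ for $0\le r\le a-1$, which drives everything else.

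For $g(A)=\max_r N_{dr}-a$ I would maximize $F(r)=ha\,O_B(r)+dr$. Because $dr$ increases and $O_B(r)\le s+2$ on $[0,a-1]$ (as $\lceil r/(2k+1)\rceil\le s+1$), the maximum sits in the top period. If $t$ is even then $r=a-1$ has nonzero even residue $t$, so $O_B(a-1)=s+2$ is maximal and realized at the largest $r$; hence $F$ peaks at $a-1$, giving the first formula via $\lfloor(a-2)/(2k+1)\rfloor=s$. If $t$ is odd then $O_B(a-1)=s+1$, while the competing maximal value $O_B=s+2$ is realized at the largest even-residue point $r=a-2$ (when $t\ge 3$); comparing $N_{d(a-1)}$ with $N_{d(a-2)}$ produces the stated $\max\{\cdot,\cdot\}$, the boundary case $t=1$ being automatically dominated by the first expression. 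I expect this case analysis — pinning down exactly which two values of $r$ can win and verifying the $t=1$ boundary — to be the main obstacle, since the sign of $ha-d$ is not fixed and genuinely forces the maximum.

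Finally, since $n(A)=\tfrac1a\sum_{r=1}^{a-1}N_{dr}-\tfrac{a-1}{2}=h\sum_{r=1}^{a-1}O_B(r)+(a-1)\bigl(\tfrac d2-\tfrac12\bigr)$, I would evaluate $\sum_{r=1}^{a-1}O_B(r)$ by splitting according to $a-1=(2k+1)s+t$. Each complete period $j=0,\dots,s-1$ contributes $k(j+1)+k(j+2)+(j+1)$ (from its $k$ odd residues, $k$ even residues, and one multiple of $2k+1$), and the partial period $r=(2k+1)s+1,\dots,a-1$ contributes $(s+1)t+\lfloor t/2\rfloor$. Summing the resulting arithmetic series and substituting $a-1=(2k+1)s+t$ should collapse to the claimed expression. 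The analogous (but more tedious) evaluation of $\sum_r N_{dr}^2$ then yields $s(A)$ through Lemma \ref{LiuXin001}.
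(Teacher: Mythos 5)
Your proposal is correct and follows essentially the same route as the paper: reduce via Lemma \ref{0202} to $O_B(M)$ for $B=(1,3,\dots,2k+1)$, obtain the same closed form $O_B(M)=s+1$ or $s+2$ according to the parity of the residue, prove $N_{dr}(m)$ is increasing in $m$ using $\lfloor a/(2k+1)\rfloor\ge 1$, locate the maximum at the same two candidates $r=a-1$ and $r=a-2$ (with the $t=1$ boundary dominated, exactly as in the paper), and evaluate $\sum_r O_B(r)$ period by period to get $n(A)$. Your parity argument (all parts odd forces $\sum_i x_i\equiv M\pmod 2$, plus the filling lemma) is in fact a cleaner justification of the lower bound for $O_B$ than the paper's, which only exhibits minimizers; otherwise the two proofs coincide.
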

\begin{proof}
By Lemma \ref{0202},  for a given $r$,  we have
\begin{align*}
N_{dr}&=\min_{m\in\mathbb{N}} \{O_B(ma+r)\cdot ha+d(ma+r)\},
\end{align*}
where $O_B(M)=\min\{\sum_{i=0}^kx_i \mid x_0+\sum_{i=1}^k(2i+1)\cdot x_i=M\}$.
Suppose $M=(2k+1)\cdot s+t$,  where $1\leq t\leq 2k+1$.

$\bullet$ When $t$ is even,  we have $O_B(M)=s+2$,  which minimizes at  $x_k=s, x_{(t-2)/2}=1, x_0=1, x_i=0, (i\neq k, (t-2)/2, 0)$ for $t\neq 2$,  and minimizes at $x_k=s, x_0=2, x_i=0, (i\neq k, 0)$ for $t=2$. Then
$$N_{dr}(m)=ha(s+2)+d(ma+r)=ha\Big(\lfloor \dfrac{ma+r-1}{2k+1}\rfloor +2\Big)+d(ma+r).$$

$\bullet$ When $t$ is odd,  we have $O_B(M)=s+1$,  which minimizes at $x_k=s, x_{(t-1)/2}=1, x_i=0, (i\neq k, (t-1)/2)$ for $t\neq 2k+1$ and $t\neq 1$,  and minimizes at $x_k=s+1, x_i=0, (i\neq k)$ for $t=2k+1$,  and minimizes at $x_k=s, x_0=1, x_i=0, (i\neq k, 0)$ for $t=1$. Then
$$N_{dr}(m)=ha(s+1)+d(ma+r)=ha\Big(\lfloor \dfrac{ma+r-1}{2k+1}\rfloor +1\Big)+d(ma+r).$$

Obviously $N_{dr}(m+1)-N_{dr}(m)\geq ha(\lfloor\frac{a}{2k+1}\rfloor-1)+ da\geq 0$. In order to minimize $N_{dr}$,  we have
$$\begin{aligned}
N_{dr}=N_{dr}(0)=
\left\{
\begin{aligned}
&ha\Big(\lfloor \dfrac{r-1}{2k+1}\rfloor +2\Big)+dr \ \ & \text{if}\ \ & t\ \text{is even}, \\
&ha\Big(\lfloor \dfrac{r-1}{2k+1}\rfloor +1\Big)+dr \ \ & \text{if}\ \ & t\ \text{is odd}. \\
\end{aligned}
\right.
\end{aligned}$$

By observing the above formula,
if $a-1=(2k+1)\cdot s+t$ and $t$ is even,  we have
\begin{align*}
\mathop{\max}\lbrace N_{dr}\rbrace &=\max \Big\{ ha\Big(\lfloor \dfrac{a-2}{2k+1}\rfloor +2\Big)+(a-1)d, \ ha\Big(\lfloor \dfrac{a-3}{2k+1}\rfloor +1\Big)+(a-2)d\Big\}
\\&=ha\Big(\lfloor \dfrac{a-2}{2k+1}\rfloor +2\Big)+(a-1)d.
\end{align*}
If $a-1=(2k+1)\cdot s+t$ and $t$ is odd,  we have
\begin{align*}
\mathop{\max}\lbrace N_{dr}\rbrace &=\max \Big\{ ha\Big(\lfloor \dfrac{a-2}{2k+1}\rfloor +1\Big)+(a-1)d, \ ha\Big(\lfloor \dfrac{a-3}{2k+1}\rfloor +2\Big)+(a-2)d\Big\}.
\end{align*}
By $g(A)=\mathop{\max}\lbrace N_{dr}\rbrace-a$, the first part of the theorem is proved.

For $n(A)$, if $r-1=(2k+1)\widehat{s}+\widehat{t}-1$, $0\leq \widehat{t}-1\leq 2k$, thus

$\bullet$ If $\widehat{t}-1$ is odd,  we have
$N_{dr}=ha(\lfloor\frac{r-1}{2k+1}\rfloor+2)+dr$.

$\bullet$ If $\widehat{t}-1$ is even,  we have
$N_{dr}=ha(\lfloor\frac{r-1}{2k+1}\rfloor+1)+dr$.

When $r=a-1$, we have $a-2=(2k+1)s+t-1$, $0\leq t-1\leq 2k$, and
\begin{align*}
\sum_{r=1}^{a-1}N_{dr}&=\sum_{r=1}^{a-1}ha\Big(\lfloor\frac{r-1}{2k+1}\rfloor+1\Big)
+d\sum_{r=1}^{a-1}r+\sum_{r=1 \atop \widehat{t}-1=odd}^{a-1}ha
\\&=ha\big((1+2+\cdots +s-1\big)(2k+1)+st)+ha(a-1)+\frac{da(a-1)}{2}+ha\big(ks+\lfloor \frac{t}{2}\rfloor\big)
\\&=ha\Big((2k+1)\frac{s(s-1)}{2}+st\Big)+a(a-1)\big(\frac{d}{2}+h\big)+ha\big(ks+\lfloor \frac{t}{2}\rfloor\big).
\end{align*}
By Lemma \ref{LiuXin001}, we have
\begin{align*}
n(A)=hs\big(ks+t+\frac{1}{2}s-\frac{1}{2}\big)+(a-1)\big(\frac{d}{2}+h-\frac{1}{2}\big)+h\lfloor\frac{t}{2}\rfloor.
\end{align*}
\end{proof}

\begin{thm}\label{12468ss2}
Let $A=(a, ha+d, ha+2d, ha+4d, ..., ha+2kd)$,  where $(a, d)=1$,  $a, h, d, k\in \mathbb{P}$, $a>2$, $2\leq 2k\leq a-1$. Then

$\bullet$ when $a\equiv 0\mod2$,  if $a-1=2k\cdot s+t$, $1\leq t\leq 2k$,  we have
$$\begin{aligned}
g(A)=
\left\{
\begin{aligned}
& ha\Big(\lfloor \dfrac{a-2}{2k}\rfloor +2\Big)+(a-1)d-a\ \ & \text{if}\ \ & t\neq 1, \\
& ha\Big( \dfrac{a-2}{2k} +1\Big)+(a-1)d-a\ \ & \text{if}\ \ & t=1, \\
\end{aligned}
\right.
\end{aligned}$$

$\bullet$ when $a\equiv 1\mod2$, if $a-2=2k\cdot s+t$, $1\leq t\leq 2k$,  we have
$$\begin{aligned}
g(A)=
\left\{
\begin{aligned}
& \max\Big\{ha\Big(\lfloor \dfrac{a-2}{2k}\rfloor +1\Big)+(a-1)d-a, \ ha\Big(\lfloor \dfrac{a-3}{2k}\rfloor +2\Big)+(a-2)d-a\Big\}\ & \text{if}\ \ & t\neq 1, \\
& ha\Big(\lfloor \dfrac{a-2}{2k}\rfloor +1\Big)+(a-1)d-a \ & \text{if}\ \ & t=1. \\
\end{aligned}
\right.
\end{aligned}$$
Let $s=\lfloor \frac{a-2}{2k}\rfloor$, $t=a-1-2k\lfloor \frac{a-2}{2k}\rfloor$. We have
$$n(A)=h(s^2k+st-s-1)+(a-1)\big(h+\frac{d}{2}-\frac{1}{2}\big)+h\lceil \frac{t}{2}\rceil.$$
\end{thm}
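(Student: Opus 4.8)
The plan is to follow exactly the scheme established in the preceding theorems of this section: reduce $N_{dr}$ to the optimization problem $O_B(M)$ via Lemma~\ref{0202}, solve $O_B(M)$ explicitly for the sequence $B=(1,2,4,6,\dots,2k)$, verify that $N_{dr}(m)$ is increasing in $m$ so that $N_{dr}=N_{dr}(0)$, and then extract $g(A)$ and $n(A)$ by maximizing and summing over $r$. Concretely, $O_B(M)=\min\{\sum x_i \mid x_0+2x_1+4x_2+\cdots+2k\,x_k=M\}$. The key structural point is that $B$ contains the single odd part $1$ together with the even parts $2,4,\dots,2k$; dividing the even parts by $2$ gives the problem $\min\{\sum x_i \mid y_1+2y_2+\cdots+ky_k=\lfloor M/2\rfloor\}$ solved greedily by $\lceil \lfloor M/2\rfloor / k\rceil$, and the single unit $x_0$ absorbs the parity of $M$. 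I would first write $M=2k\cdot s+t$ with $1\le t\le 2k$ and determine $O_B(M)$ in closed form, tracking whether $t$ is even or odd (an odd $t$ forces one use of the part $1$, costing an extra unit). This is the analogue of the odd-sequence computation in Theorem~\ref{13579ss1}.

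Once $O_B(M)$ is pinned down, the monotonicity step is routine: I would compute $N_{dr}(m+1)-N_{dr}(m)$ and bound it below by $ha(\lfloor a/(2k)\rfloor-1)+da\ge 0$, exactly as in Theorem~\ref{13579ss1}, concluding $N_{dr}=N_{dr}(0)$. This yields a piecewise formula for $N_{dr}$ of the shape $ha(\lfloor (r-\varepsilon)/(2k)\rfloor + c_t) + dr$ where the ceiling/floor offset $c_t\in\{1,2\}$ depends on the parity of the residue of $r$. The parity bookkeeping here is the crux: because the even parts only produce even contributions and the unit part $1$ is needed precisely once when the target residue is odd, the value of $O_B(r)$ jumps by one according to parity, and I expect the clean break in the statement between $t=1$ and $t\neq1$ to come from the boundary case $r_1=0$ where no unit is required.

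For $g(A)$ I would then maximize $N_{dr}$ over $0\le r\le a-1$. Since $N_{dr}$ is essentially increasing in $r$ but with a parity-driven additive correction, the maximum is attained either at $r=a-1$ or at $r=a-2$ (the competing residue of opposite parity), and I would compare these two candidates. This is why the statement splits on $a\bmod 2$ (which fixes the parity of $a-1$) and, within each case, on whether $t=1$: the case $t=1$ collapses the maximum to a single candidate because the $r=a-2$ term is then dominated, whereas $t\neq1$ genuinely requires the $\max$. I would verify each of the four resulting subcases by substituting $a-1=2k\,s+t$ (resp.\ $a-2=2k\,s+t$) and simplifying the floor expressions, paralleling the $g(A)$ derivation in Theorem~\ref{13579ss1}.

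For $n(A)$, following Lemma~\ref{LiuXin001}, I would compute $\sum_{r=1}^{a-1}N_{dr}$ by splitting the sum into the term $d\sum_{r=1}^{a-1}r=da(a-1)/2$, the main term $ha\sum_{r=1}^{a-1}(\lfloor (r-\varepsilon)/(2k)\rfloor+1)$ evaluated as an arithmetic series over complete blocks of length $2k$ plus a partial block, and the parity-correction term $ha\cdot\#\{r : \text{residue of } r \text{ is odd}\}$, which contributes a factor involving $\lceil t/2\rceil$. Dividing by $a$ and subtracting $(a-1)/2$ as in Lemma~\ref{LiuXin001} then yields the stated $n(A)=h(s^2k+st-s-1)+(a-1)(h+d/2-1/2)+h\lceil t/2\rceil$. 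The main obstacle throughout is careful parity and boundary bookkeeping in the partial block and in the definitions of $s,t$ (note the asymmetry that the statement writes $a-1=2k\,s+t$ in the even case but $a-2=2k\,s+t$ in the odd case, yet defines $s=\lfloor (a-2)/(2k)\rfloor$, $t=a-1-2k\lfloor (a-2)/(2k)\rfloor$ for $n(A)$); getting these index conventions to line up consistently across the $g(A)$ cases and the $n(A)$ formula is where errors are most likely to creep in, and I would check small values of $a$ and $k$ numerically, as the authors do in Corollary~\ref{012456}, to confirm correctness.
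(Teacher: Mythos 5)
Your proposal is correct and follows essentially the same route as the paper: the same reduction via Lemma~\ref{0202}, the same closed form for $O_B(M)$ (your halving observation with $x_0$ absorbing parity is equivalent to the paper's explicit minimizers $x_k=s$, $x_{t/2}=1$ or $x_{(t-1)/2}=1$, $x_0=1$), the identical monotonicity bound $ha(\lfloor a/(2k)\rfloor-1)+da\geq 0$, the same parity-driven case analysis comparing $r=a-1$ against the competing residue class for $g(A)$, and the same block-plus-parity-correction summation with the $\lceil t/2\rceil$ term for $n(A)$. The only loose end is that in the $t=1$ subcases the competitor the paper actually rules out is $r=a-3$ (resp.\ $r=a-4$) rather than $r=a-2$, but your planned subcase-by-subcase substitution would surface this and the conclusion is unaffected.
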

\begin{proof}
By Lemma \ref{0202},  for a given $r$,  we have
\begin{align*}
N_{dr}&=\min_{m\in\mathbb{N}} \{O_B(ma+r)\cdot ha+d(ma+r)\},
\end{align*}
where $O_B(M)=\min\{\sum_{i=0}^kx_i \mid x_0+\sum_{i=1}^k2i\cdot x_i=M\}$.
Suppose $M=2k\cdot s+t$,  where $\ 1\leq t\leq 2k$.

$\bullet$ When $t$ is even,  we have $O_B(M)=s+1$,  which minimizes at $x_k=s, x_{t/2}=1, x_i=0, (i\neq k, t/2)$ for $t\neq 2k$,  and minimizes at $x_k=s+1, x_i=0, (i\neq k)$ for $t=2k$. This gives
$$N_{dr}(m)=ha(s+1)+d(ma+r)=ha\Big(\lfloor \dfrac{ma+r-1}{2k}\rfloor +1\Big)+d(ma+r).$$

$\bullet$ When $t$ is odd and $t\neq 1$,  we have $O_B(M)=s+2$,  which minimizes at $x_k=s, x_{(t-1)/2}=1, x_0=1, x_i=0, (i\neq 0, k, (t-1)/2)$. Then
$$N_{dr}(m)=ha(s+2)+d(ma+r)=ha\Big(\lfloor \dfrac{ma+r-1}{2k}\rfloor +2\Big)+d(ma+r).$$
When $t=1$,  we have $O_B(M)=s+1$,  which minimizes at $x_k=s, x_0=1, x_i=0, (i\neq 0, k)$. Then
$$N_{dr}(m)=ha(s+1)+d(ma+r)=ha\Big(\lfloor \dfrac{ma+r-1}{2k}\rfloor +1\Big)+d(ma+r).$$

Since $N_{dr}(m+1)-N_{dr}(m)\geq ha(\lfloor \frac{a}{2k}\rfloor-1)+da\geq 0$ ,  we have
$$\begin{aligned}
N_{dr}=N_{dr}(0)=
\left\{
\begin{aligned}
&ha\Big(\lfloor \dfrac{r-1}{2k}\rfloor +1\Big)+dr \ \ & \text{if}\ \ & t\ \text{is even or }\ t=1, \\
&ha\Big(\lfloor \dfrac{r-1}{2k}\rfloor +2\Big)+dr \ \ & \text{if}\ \ & t\ \text{is odd and}\  t\neq 1. \\
\end{aligned}
\right.
\end{aligned}$$

If $a$ is even,  then $r=a-1$ is odd and the corresponding $t$ is odd,  $r=a-2$ is even and the corresponding $t$ is even,  etc.  Observe the above formula,
if $a-1=2k\cdot s+t, \ t\neq 1$,  we have
\begin{align*}
\mathop{\max}\lbrace N_{dr}\rbrace &=\max \Big\{ ha\Big(\lfloor \dfrac{a-2}{2k}\rfloor +2\Big)+(a-1)d, \ ha\Big(\lfloor \dfrac{a-3}{2k}\rfloor +1\Big)+(a-2)d\Big\}
\\&=ha\Big(\lfloor \dfrac{a-2}{2k}\rfloor +2\Big)+(a-1)d.
\end{align*}
If $a-1=2k\cdot s+t, \ t= 1$,  we have
\begin{align*}
\mathop{\max}\lbrace N_{dr}\rbrace &=\max \Big\{ ha\Big(\lfloor \dfrac{a-2}{2k}\rfloor +1\Big)+(a-1)d, \ ha\Big(\lfloor \dfrac{a-4}{2k}\rfloor +2\Big)+(a-3)d\Big\}
\\&=ha\Big( \dfrac{a-2}{2k}+1\Big)+(a-1)d.
\end{align*}

If $a$ is odd,  we have $r=a-1$ is even and the corresponding $t$ is even,  $r=a-2$ is odd and the corresponding $t$ is odd,  etc.  Observe the above formula,
if $a-2=2k\cdot s+t, \ t\neq 1$,  we have
\begin{align*}
\mathop{\max}\lbrace N_{dr}\rbrace &=\max \Big\{ ha\Big(\lfloor \dfrac{a-2}{2k}\rfloor +1\Big)+(a-1)d, \ ha\Big(\lfloor \dfrac{a-3}{2k}\rfloor +2\Big)+(a-2)d\Big\}.
\end{align*}
If $a-2=2k\cdot s+t, \ t= 1$,  we have
\begin{align*}
\mathop{\max}\lbrace N_{dr}\rbrace &=\max \Big\{ ha\Big(\lfloor \dfrac{a-2}{2k}\rfloor +1\Big)+(a-1)d, \ ha\Big(\lfloor \dfrac{a-5}{2k}\rfloor +2\Big)+(a-4)d\Big\}
\\&=ha\Big(\lfloor \dfrac{a-2}{2k}\rfloor +1\Big)+(a-1)d.
\end{align*}

By $g(A)=\max\{N_{dr}\}-a$, the first part of the theorem is proved.

For $n(A)$, if $r-1=2k\cdot \widehat{s}+\widehat{t}-1$, $0\leq \widehat{t}-1\leq 2k-1$, thus

$\bullet$ If $\widehat{t}-1$ is odd,  we have
$N_{dr}=ha(\lfloor\frac{r-1}{2k}\rfloor+1)+dr$.

$\bullet$ If $\widehat{t}-1$ is even,  we have
$$\begin{aligned}
\left\{
\begin{aligned}
N_{dr}=ha\Big(\lfloor\frac{r-1}{2k}\rfloor+2\Big)+dr\ &\  \text{if}\  & \widehat{t}-1\neq 0, \\
N_{dr}=ha\Big(\lfloor\frac{r-1}{2k}\rfloor+1\Big)+dr\ &\  \text{if}\  & \widehat{t}-1= 0.
\end{aligned}
\right.
\end{aligned}$$

Therefore, when $r=a-1$, we have $a-2=2k\cdot s+t-1$, $1\leq t\leq 2k$ and
\begin{small}
\begin{align*}
\sum_{r=1}^{a-1}N_{dr}&=\sum_{r=1}^{a-1}\Big(ha(\lfloor \frac{r-1}{2k}\rfloor+1)+dr\Big)+\sum_{r=1, \widehat{t}-1=even\atop \widehat{t}-1\neq 0}^{a-1}ha
\\&=\sum_{r=1}^{a-1}ha\lfloor \frac{r-1}{2k}\rfloor+ha(a-1)+\frac{da(a-1)}{2}+\sum_{r=1, \widehat{t}-1=even\atop \widehat{t}-1\neq 0}^{a-1}ha
\\&=ha((1+2+\cdots+s-1)2k+st)+ha(a-1)+\frac{da(a-1)}{2}+ha\big((k-1)s+\lceil \frac{t}{2}\rceil-1\big)
\\&=ha(s(s-1)k+st)+ha(a-1)+\frac{da(a-1)}{2}+ha\big((k-1)s+\lceil \frac{t}{2}\rceil-1\big)
\\&=ha(s^2k+st-s-1)+a(a-1)(h+\frac{d}{2})+ha\lceil \frac{t}{2}\rceil,
\end{align*}
\end{small}
where $s=\lfloor \frac{a-2}{2k}\rfloor$, $t=a-1-2k\lfloor \frac{a-2}{2k}\rfloor$. By Lemma \ref{LiuXin001}, we have
\begin{align*}
n(A)=h(s^2k+st-s-1)+(a-1)\Big(h+\frac{d}{2}-\frac{1}{2}\Big)+h\lceil \frac{t}{2}\rceil.
\end{align*}
\end{proof}

The above results can be verified by Mathematica. First we use Mathematica to calculate $g(A)$ , then we calculate $n(A)$ and $s(A)$. In Mathematica, we have
\begin{enumerate}
  \item Input: FrobeniusNumber$[\{5, 16, 19, 22\}]$

  \item Output:33

  \item Input: Table[FrobeniusSolve$[{5, 16, 19, 22}, i], \{i, 33\}$]

  \item Output:$\{\{\}, \{\}, \{\}, \{\}, \{\{1, 0, 0, 0\}\}, \{\}, \{\}, \{\}, \{\}, \{\{2, 0, 0,0\}\}, \{\}, \{\}, \{\}, \{\}, \\
      \{\{3, 0, 0, 0\}\},\{\{0, 1, 0, 0\}\}, \{\}, \{\}, \{\{0,0, 1, 0\}\}, \{\{4, 0, 0, 0\}\}, \{\{1, 1, 0, 0\}\},\\
      \{\{0, 0, 0, 1\}\}, \{\}, \{\{1,0, 1, 0\}\}, \{\{5, 0, 0, 0\}\}, \{\{2, 1, 0, 0\}\}, \{\{1, 0, 0,1\}\}, \{\}, \\
      \{\{2, 0, 1, 0\}\}, \{\{6, 0, 0, 0\}\}, \{\{3, 1, 0, 0\}\}, \{\{0, 2, 0,0\}, \{2, 0, 0, 1\}\}, \{\}\}$
\end{enumerate}

For example, the penultimate result above indicates that when $i=32$, there are two non-negative integer solutions to $5x_1+16x_2+19x_3+22x_4=32$, i.e., $(x_1,x_2,x_3,x_4)=(0,2,0,0), (x_1,x_2,x_3,x_4)=(2,0,0,1)$. So for $A=(5,16,19,22)$, we have $g(A)=33, n(A)=17$ and $s(A)=209$.

\section{Another Idea: Extract Constant Term}

In this section, we introduce ``Constant Term Method" to calculate $\sum_{r=0}^{a-1}N_r$, $\sum_{r=0}^{a-1}N_r^2$, $\sum_{r=0}^{a-1}N_r^3$ etc.
The idea can be carried out by Maple.

\subsection{Introduction to Constant Term Method}
In \cite{Xin15}, Xin gave a polynomial time algorithm for MacMahon's partition analysis in a suitable condition, and develop a Euclid style algorithm with an implementation by the Maple package CTEuclid. For a more detailed description, see \cite{Xin04,Xin15}, our ideas mainly come from these two articles.
The new point that the ideas in \cite[Section 3]{Xin15} can be adapted to solve problems involving symbolic integer parameters. Such problems arise naturally in this article, but cannot be handled by existing packages.

Suppose $E(x)$ is written as a sum of \emph{simple Elliott-rational functions} in the following form:
\begin{align*}
E(x)=\sum_{i}\frac{L_i(x)}{(1-x^{b_{i1}})(1-x^{b_{i2}})\cdots (1-x^{b_{im}})},
\end{align*}
where the $L_i(x)$'s are Laurent polynomials, and $b_{ij}, 1\leq j\leq m$ are integers. Under the premise that $E(1)$ exists, our purpose is to calculate $E(1)$.

In \cite{Xin15}, two methods are introduced. One is by computing $\CT_t E(1+t)$, and the other is by computing $\CT_t E(e^t)$, where $\CT_t F(t)$ denotes taking constant term in the Laurent series expansion of $F(t)$. We adopt the exponential substitution $x\mapsto e^t$, which leads to
\begin{align*}
E(e^t)=\sum_{i}\frac{L_i(e^t)}{(1-e^{b_{i1}t})(1-e^{b_{i2}t})\cdots (1-e^{b_{im}t})}.
\end{align*}
Regards both sides of the above equation as formal Laurent series in $t$. Then by the linearity of the constant term operator $\mathrm{CT}_t$, we have
\begin{prop}
Suppose $E(1)$ exists, we have
\begin{align*}
E(1)=E(e^0)=\underset{t}{\mathrm{CT}}E(e^t)=\sum_{i}\underset{t}{\mathrm{CT}}\frac{L_i(e^t)}{(1-e^{b_{i1}t})(1-e^{b_{i2}t})\cdots (1-e^{b_{im}t})}.
\end{align*}
\end{prop}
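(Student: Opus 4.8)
The plan is to exploit the single hypothesis that $E(1)$ exists, which says precisely that the rational function $E(x)$---obtained by putting all the summands over a common denominator---is regular at $x=1$, with $E(1)$ its value there. The substitution $x\mapsto e^t$ sends $x=1$ to $t=0$ and is itself analytic near $t=0$ (the power series $e^t=1+t+t^2/2!+\cdots$), so the whole argument reduces to transporting regularity at $x=1$ to regularity at $t=0$ and then reading off the constant term.

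First I would expand each summand as a Laurent series in $t$. Since the $b_{ij}$ are nonzero integers, each denominator factor satisfies $1-e^{b_{ij}t}=-b_{ij}t\bigl(1+O(t)\bigr)$, a simple zero at $t=0$; hence $1/(1-e^{b_{ij}t})$ is a Laurent series with a simple pole, and the product of $m$ such factors has a pole of order at most $m$. As $L_i(x)$ is a Laurent polynomial, $L_i(e^t)$ is an ordinary power series in $t$. Consequently each term $L_i(e^t)/\prod_j(1-e^{b_{ij}t})$ is a well-defined formal Laurent series in $t$ (generally with negative powers), on which $\CT_t$ is defined as the coefficient of $t^0$.

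Next comes the key step. Because $E(1)$ exists, $E(x)$ admits a Taylor expansion $E(x)=\sum_{n\ge 0}c_n(x-1)^n$ about $x=1$ with $c_0=E(1)$ and no negative powers of $(x-1)$. Substituting $x=e^t$ gives $x-1=e^t-1=t\bigl(1+O(t)\bigr)$, a power series in $t$ with zero constant term, so the composition $E(e^t)=\sum_{n\ge 0}c_n(e^t-1)^n$ is again an ordinary power series in $t$: all the negative-power contributions from the individual summands cancel in the aggregate. Its constant term is therefore $c_0$, that is, $\CT_t E(e^t)=E(e^t)\big|_{t=0}=E(e^0)=E(1)$, which establishes the chain of middle equalities.

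Finally, since $\CT_t$ is a linear functional on formal Laurent series and $E(e^t)$ is a finite sum of the terms analyzed above, I would distribute $\CT_t$ across the sum to obtain the last equality $\CT_t E(e^t)=\sum_i \CT_t\, L_i(e^t)/\prod_j(1-e^{b_{ij}t})$. I expect the one genuinely delicate point to be exactly this step: although each summand individually has a pole at $t=0$, the linearity of $\CT_t$ is what legitimizes computing $\CT_t$ term by term, with the poles of the separate pieces canceling only once summed---precisely the content guaranteed by the hypothesis that $E(1)$ exists.
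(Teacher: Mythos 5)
Your proof is correct and follows essentially the same route as the paper, which justifies the proposition by regarding $E(e^t)$ and each summand as formal Laurent series in $t$ and invoking the linearity of $\CT_t$. You merely spell out the details the paper leaves implicit---the simple zero of each factor $1-e^{b_{ij}t}$ at $t=0$, the regularity of $E$ at $x=1$ transported through the substitution $x\mapsto e^t$, and the termwise extraction of constant terms---so there is no substantive difference in approach.
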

For fix $m$, we only need the following two formulas and do polynomial multiplications:
\begin{equation}\label{tet}
\frac{t}{1-e^t}=\sum_{n=0}^m-\frac{\mathcal{B}_n}{n!}t^n+o(t^m)
=-1+\frac{1}{2}t-\frac{1}{12}t^2+\frac{1}{720}t^4+\cdots+o(t^m),
\end{equation}
\begin{equation}\label{et}
e^t=\sum_{n=0}^m\frac{1}{n!}t^n+o(t^m)=1+t+\frac{1}{2}t^2+\frac{1}{6}t^3+\cdots+ o(t^m).
\end{equation}
The $\mathcal{B}_n$ are the well-konwn Bernoulli numbers.

\begin{thm}\label{ctpolynomE}
Let $L(x)$ be a Laurent polynomial with $U$ terms, and $b_j, 1\leq j\leq m$ are integers. The following formula can be quickly calculated in finite steps:
\begin{align*}
\underset{t}{\mathrm{CT}}\frac{L(e^t)}{(1-e^{b_{1}t})(1-e^{b_{2}t})\cdots (1-e^{b_{m}t})}.
\end{align*}
\end{thm}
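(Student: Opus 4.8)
The plan is to reduce the computation of the constant term to extracting a single coefficient from a product of truncated power series, all of which are governed by the two expansions \eqref{tet} and \eqref{et}.

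First I would isolate the pole at $t=0$ contributed by each denominator factor. Since $1-e^{b_j t}=-b_j t+O(t^2)$ has a simple zero at $t=0$ (here one uses $b_j\neq 0$, which is implicit because a factor $(1-x^{0})$ is meaningless), I write
\begin{equation*}
\frac{1}{1-e^{b_j t}}=\frac{1}{b_j t}\,\phi_{b_j}(t),\qquad \phi_{b_j}(t):=\frac{b_j t}{1-e^{b_j t}}=\sum_{n\geq 0}-\frac{\mathcal{B}_n}{n!}\,b_j^{\,n}\,t^n,
\end{equation*}
where the series for $\phi_{b_j}$ comes from \eqref{tet} with $t$ replaced by $b_j t$, and $\phi_{b_j}(0)=-\mathcal{B}_0=-1\neq 0$ is a genuine power series. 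Thus the product of the $m$ denominator factors contributes a pole of order exactly $m$, whose singular part is completely captured by the explicit factor $t^{-m}$.

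Next I would expand the numerator. Writing $L(x)=\sum_{k=1}^{U}c_k x^{a_k}$ with $a_k\in\ZZ$, equation \eqref{et} gives $L(e^t)=\sum_{k=1}^{U}c_k e^{a_k t}=\sum_{k=1}^{U}c_k\sum_{n\geq 0}\frac{a_k^{\,n}}{n!}t^n$, again an honest power series. Setting
\begin{equation*}
P(t):=L(e^t)\prod_{j=1}^{m}\phi_{b_j}(t),
\end{equation*}
which is a power series in $t$, the target expression equals $\bigl(\prod_{j=1}^{m}b_j\bigr)^{-1}t^{-m}P(t)$, and therefore
\begin{equation*}
\CT_{t}\frac{L(e^t)}{\prod_{j=1}^{m}(1-e^{b_j t})}=\frac{1}{\prod_{j=1}^{m}b_j}\,[t^{m}]\,P(t),
\end{equation*}
since the coefficient of $t^{0}$ in $t^{-m}P(t)$ is the coefficient of $t^{m}$ in $P(t)$.

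Finally, finiteness and efficiency follow because only the truncation of $P(t)$ modulo $t^{m+1}$ is needed. This requires the Bernoulli numbers $\mathcal{B}_0,\dots,\mathcal{B}_m$ for the $m$ factors $\phi_{b_j}$, the $U$ truncated exponentials assembling $L(e^t)$, and $m$ truncated power-series multiplications, each in $O(m^2)$ coefficient operations; hence the whole constant term is produced in a number of arithmetic operations polynomial in $m$ and $U$. I do not expect a genuine obstacle here: the only point requiring care is the pole-order bookkeeping, namely confirming that each factor contributes a simple pole so that the total order is at most $m$ and the truncation order $t^{m+1}$ indeed suffices, together with the harmless standing assumption $b_j\neq 0$ that makes the factorization of the pole valid.
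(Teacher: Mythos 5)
Your proposal is correct and matches the paper's argument in all essentials: both clear the order-$m$ pole by multiplying by $t^m$, expand each denominator factor via the Bernoulli-number series \eqref{tet} (you factor out $\tfrac{1}{b_j t}$ explicitly, while the paper absorbs the $b_j^{-1}$ into the coefficients $-\mathcal{B}_n b_j^{n-1}/n!$) and the numerator via \eqref{et}, then truncate modulo $t^{m+1}$ and extract $[t^m]$ of the product. Your added remarks on $b_j\neq 0$ and the $O(m^2)$ cost per multiplication are harmless refinements of the same computation.
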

\begin{proof}
By Equation \eqref{tet}, we can use $m-1$ multiplications to obtain
\begin{align*}
\frac{t^m}{\prod_{j=1}^m(1-e^{b_{j}t})}=\prod_{j=1}^m\sum_{n\geq 0}-\frac{\mathcal{B}_nb_j^{n-1}}{n!}t^n=\sum_{n=0}^m c_n t^n+o(t^m).
\end{align*}
This is a power series in $t$. By Equation \eqref{et}, we have the expansion
$$L(e^t)=\sum_{n=0}^m d_n t^n+o(t^m).$$
Therefore we have
\begin{align*}
\underset{t}{\mathrm{CT}}\frac{L(e^t)}{\prod_{j=1}^m(1-e^{b_{j}t})}
=[t^m]\Big(\sum_{n=0}^m d_n t^n\cdot \sum_{n=0}^m c_n t^n\Big)=\sum_{n=0}^m d_n c_{m-n}.
\end{align*}
\end{proof}

Observe that the above process works for symbolic $b_{ij}$. It is not hard to rewrite the code in Maple to carry out the above process. We store
Equations \eqref{tet} and \eqref{et} for $m=30$ in advance. This is sufficient for problems with $m\leq 30$. Maple can help us quickly calculate $n(A),s(A)$, as we shall discuss next.

\subsection{Calculating $n(A)$ and $s(A)$}
In \cite{Brown1993}, Brown and Shiue use the following ideas to calculate $s(a,b)$. Let $A$ be any set of positive integers with $\gcd(A)=1$. Set $f_{\mathcal{A}}(x)=\sum_{a\in \mathcal{A}}x^a$. Then
$$f_{{\mathcal{NR}}(A)}(x)=\frac{1}{1-x}-f_{{\mathcal{R}}(A)}(x),$$
and
$$g(A)=\deg f_{{\mathcal{NR}}(A)}(x),\ \ n(A)=\lim_{x\rightarrow 1}f_{{\mathcal{NR}}(A)}(x)\ \ s(A)=\lim_{x\rightarrow 1}f_{{\mathcal{NR}}(A)}^{\prime}(x).$$
In \cite{M.Koppe}, M. K\"oppe, S. Verdoolaege and K. M. Woods implement the Barvinok--Woods integer projection algorithm to write
$f(x)$ (below) as a short sum of simple rational functions. We also have a decomposition of $f(x)$, but our simple rational functions are easily seen to be a polynomial with nonnegative integer coefficients.

Let $f(x):=\sum_{r=0}^{a-1}x^{N_r}$. Then $g(A)=\deg f(x)-a$, and $f(x)$ is a polynomial represented as a sum of rational functions. Let $n,p \in \mathbb{P}$ and $(n)_p=n(n-1)\cdots (n-p+1)$ be the following factorial. For $k\in \mathbb{N}$, Stirling number of the first kind $s(n, k)$ and the second kind $S(n, k)$ have the following relations:
$$(n)_p= \sum_{k=0}^ps(p,k)n^k,\ \ \ n^p=\sum_{k=0}^{p}S(p,k)(n)_k.$$
The relevant conclusions can be obtained in \cite{RP. Stanley} for Stirling number of the first kind $s(n, k)$ and the second kind $S(n, k)$. By taking the $p$-th order differential of $f(x)$, i.e.,
$$f^{(p)}(x)=\big(\frac{d}{dx}\big)^p f(x)=\sum_{r=1}^{a-1}(N_r)_p x^{N_r-p},$$
we have $f^{(p)}(1)=\sum_{r=1}^{a-1}(N_r)_p$ and
$$\sum_{r=1}^{a-1}N_r^p=\sum_{r=1}^{a-1}\sum_{k=0}^p S(p,k) (N_r)_k= \sum_{k=0}^p S(p,k)f^{(k)}(1).$$

Therefore, we only need to solve $f^{\prime}(1), f^{\prime\prime}(1)$, $f^{(\mu+1)}(1)$ and further we can get $n(A), s(A), s_{\mu}(A)$. We can easily obtain the following Theorem.
\begin{thm}
Let $f(x):=\sum_{r=0}^{a-1}x^{N_r}$. Then the Frobenius number, Sylvester number and Sylvester sum are respectively:
\begin{align*}
g(A)&=\max \mathcal{NR}=\deg f(x) -a,\\
n(A)&=\sum_{n\in \mathcal{NR}}1=\frac{1}{a}f^{\prime}(1)-\frac{a-1}{2},\\
s(A)&=\sum_{n\in \mathcal{NR}}n=\frac{1}{2a}f^{\prime\prime}(1)-\frac{a-1}{2a}f^{\prime}(1)+\frac{a^2-1}{2}.
\end{align*}
\end{thm}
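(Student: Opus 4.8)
The plan is to obtain $f'(1)$ and $f''(1)$ by differentiating $f(x)=\sum_{r=0}^{a-1}x^{N_r}$ term by term, and then to feed these two numbers, together with $\deg f(x)$, into the expressions for $g(A)$, $n(A)$ and $s(A)$ already established in Lemma~\ref{LiuXin001}. In effect the whole statement is just a change of variables from the power sums $\sum_r N_r$ and $\sum_r N_r^2$ to the derivative values $f'(1)$ and $f''(1)$, so the task is to make that translation explicit.

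First I would settle the degree claim. Because $N_r\equiv r\pmod a$ by the definition of $N_r$, the numbers $N_0,N_1,\dots,N_{a-1}$ occupy distinct residue classes modulo $a$, hence are pairwise distinct; thus $f(x)$ really is a sum of $a$ distinct monomials and $\deg f(x)=\max_{0\le r\le a-1}N_r$. Substituting this into $g(A)=\max_r N_r-a$ from Lemma~\ref{LiuXin001} gives $g(A)=\deg f(x)-a$ at once. Next I would differentiate termwise:
\[
f'(x)=\sum_{r=0}^{a-1}N_r x^{N_r-1},\qquad f''(x)=\sum_{r=0}^{a-1}N_r(N_r-1)x^{N_r-2}.
\]
Since $N_0=0$ (as noted right after Lemma~\ref{LiuXin001}), the $r=0$ term vanishes in both derivatives, so evaluating at $x=1$ yields $f'(1)=\sum_{r=1}^{a-1}N_r$ and $f''(1)=\sum_{r=1}^{a-1}N_r^2-\sum_{r=1}^{a-1}N_r$. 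The $n(A)$ identity is then read off directly from the second line of Lemma~\ref{LiuXin001}.

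For $s(A)$ I would eliminate the two power sums in favour of the derivatives via $\sum_{r=1}^{a-1}N_r=f'(1)$ and $\sum_{r=1}^{a-1}N_r^2=f''(1)+f'(1)$, and substitute into the $s(A)$ line of Lemma~\ref{LiuXin001}:
\[
s(A)=\frac{1}{2a}\bigl(f''(1)+f'(1)\bigr)-\frac12 f'(1)+\frac{a^2-1}{12}=\frac{1}{2a}f''(1)-\frac{a-1}{2a}f'(1)+\frac{a^2-1}{12}.
\]
This matches the claim, with the understanding that the final constant should read $\frac{a^2-1}{12}$ as in Lemma~\ref{LiuXin001} (the $\frac{a^2-1}{2}$ printed in the statement appears to be a typographical slip). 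There is no genuine obstacle here: the argument is purely a term-by-term differentiation. The only points deserving a moment's care are verifying that the $N_r$ are distinct, so that $\deg f$ is exactly the maximum exponent, and that $N_0=0$ removes the degenerate $r=0$ term from $f'(1)$ and $f''(1)$.
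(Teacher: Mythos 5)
Your proof is correct and is essentially the paper's own (implicit) argument: the paper obtains the same identities by term-by-term differentiation, noting $f^{(p)}(1)=\sum_{r=1}^{a-1}(N_r)_p$, and substituting into Lemma~\ref{LiuXin001}, exactly as you do. You are also right that the constant in the $s(A)$ line is a typographical slip in the statement and should read $\frac{a^2-1}{12}$, as confirmed by Lemma~\ref{LiuXin001} and by a direct check on, e.g., $A=(2,3)$.
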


In \cite{T.Komatsu2021,T.Komatsu2022}, T. Komatsu introduce another statistic {\it Sylvester weighted power sum}, that is
$$
s_\mu^{(\lambda)}(A):=\sum_{n\in{\mathcal{NR}}(A)}\lambda^n n^\mu
$$
for a positive integer $\mu$ with weight $\lambda(\ne 0,1)$, $\lambda\in \mathbb{C}$. Weighted sums include the so-called alternate sums \cite{W.Wang2008} in special cases, i.e., $\lambda=-1$.

Now, let $g(x)=\sum_{n\in \mathcal{NR}}x^n$. Note: Do not confuse $g(x)$ and $g(A)$. We have
$$g(x)=\frac{1}{1-x}-\frac{1}{1-x^a}\sum_{r=0}^{a-1} x^{N_r}=\frac{1}{1-x}-\frac{1}{1-x^a}f(x).$$
Further we have
$$g^{\prime}(x)=\frac{1}{(1-x)^2}-\frac{f^{\prime}(x)}{(1-x^a)}-\frac{af(x)\cdot x^{a-1}}{(1-x^a)^2}.$$

\begin{thm}\label{gnsssmula}
Let $g(x)=\sum_{n\in \mathcal{NR}}x^n$. Then the Frobenius number, Sylvester number, Sylvester sum, Sylvester power sum, and Sylvester weighted power sum are respectively:
\begin{align*}
&g(A)=\max \mathcal{NR}=\deg g(x),\\
&n(A)=\sum_{n\in \mathcal{NR}}1=g(1)=\underset{t}{\mathrm{CT}}g(e^t),\\
&s(A)=\sum_{n\in \mathcal{NR}}n=g^{\prime}(1)=\underset{t}{\mathrm{CT}}g^{\prime}(e^t),\\
&s_{\mu}(A)=\sum_{n\in \mathcal{NR}}n^{\mu}=\sum_{k=0}^{\mu}S(\mu, k)g^{(k)}(1)=\sum_{k=0}^{\mu}S(\mu, k)\underset{t}{\mathrm{CT}}g^{(k)}(e^t),\\
&s_\mu^{(\lambda)}(A)=\sum_{n\in{\mathcal{NR}}}\lambda^n n^\mu=\sum_{k=0}^{\mu} S(\mu,k) g^{(k)}(\lambda),
\end{align*}
where $\mu$ is a positive integer, $\lambda(\ne 0,1)$, $\lambda\in \mathbb{C}$ and
$$g^{(k)}(x)=\frac{k!}{(1-x)^{k+1}}-\sum_{i=0}^k \binom{k}{i}\Big(\frac{1}{1-x^a}\Big)^{(i)}f^{(k-i)}(x).$$
\end{thm}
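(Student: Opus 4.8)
The plan is to work entirely with the polynomial $g(x)=\sum_{n\in\mathcal{NR}}x^n$, reading off each statistic as an evaluation or a derivative of $g$ at a single point, and resorting to the constant-term operator only when that point is the singular value $x=1$. First I would record the two structural facts about $g(x)$. Since $\gcd(A)=1$, the set $\mathcal{NR}=\mathcal{NR}(A)$ is finite, so $g(x)$ is an honest polynomial whose exponents are precisely the non-representable integers; hence its top exponent is $\max\mathcal{NR}=g(A)$, proving $g(A)=\deg g(x)$. Differentiating term by term, the ordinary $k$-th derivative is $g^{(k)}(x)=\sum_{n\in\mathcal{NR}}(n)_k x^{n-k}$, so $g^{(k)}(1)=\sum_{n\in\mathcal{NR}}(n)_k$; taking $k=0,1$ gives $g(1)=|\mathcal{NR}|=n(A)$ and $g'(1)=\sum_{n\in\mathcal{NR}}n=s(A)$ immediately. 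For general $\mu$ I would substitute the Stirling expansion $n^\mu=\sum_{k=0}^\mu S(\mu,k)(n)_k$ into $s_\mu(A)=\sum_{n\in\mathcal{NR}}n^\mu$ and interchange the two finite summations, giving $s_\mu(A)=\sum_{k=0}^\mu S(\mu,k)\sum_{n\in\mathcal{NR}}(n)_k=\sum_{k=0}^\mu S(\mu,k)g^{(k)}(1)$.

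Next I would produce the closed rational form of $g^{(k)}$. Starting from the identity $g(x)=\frac{1}{1-x}-\frac{f(x)}{1-x^a}$ established above (with $f(x)=\sum_{r=0}^{a-1}x^{N_r}$), the $k$-th derivative of $\frac{1}{1-x}$ is $\frac{k!}{(1-x)^{k+1}}$, while the Leibniz rule applied to $\frac{1}{1-x^a}\cdot f(x)$ yields $\sum_{i=0}^k\binom{k}{i}\bigl(\frac{1}{1-x^a}\bigr)^{(i)}f^{(k-i)}(x)$, and subtracting gives exactly the stated expression for $g^{(k)}(x)$. The point needing care, which I expect to be the main obstacle, is that this rational expression is genuinely singular at $x=1$: both $\frac{k!}{(1-x)^{k+1}}$ and the derivatives $\bigl(\frac{1}{1-x^a}\bigr)^{(i)}$ have poles there, even though $g^{(k)}(1)$ is finite because $g$ is a polynomial. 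Thus $g^{(k)}(1)$ must be read as $\lim_{x\to1}g^{(k)}(x)$, and the poles of the individual summands must cancel. To evaluate this limit without first clearing denominators I would invoke the preceding constant-term Proposition: since $g^{(k)}(1)$ exists, one has $g^{(k)}(1)=\CT_t\,g^{(k)}(e^t)$, the substitution $x\mapsto e^t$ turning each singular summand into a Laurent series in $t$ whose poles cancel in the sum, the finite answer sitting in the constant term. Theorem \ref{ctpolynomE} then guarantees this constant term is computable in finitely many steps, which furnishes the $\CT_t$ forms of $n(A)$, $s(A)$, and $s_\mu(A)$.

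Finally, for the weighted sum $s_\mu^{(\lambda)}(A)=\sum_{n\in\mathcal{NR}}\lambda^n n^\mu$ with $\lambda\neq0,1$ there is no singularity, so I would substitute $x=\lambda$ directly. Using $\bigl(x\frac{d}{dx}\bigr)^\mu=\sum_{k=0}^\mu S(\mu,k)\,x^k\frac{d^k}{dx^k}$, equivalently $\sum_{n}(n)_k\lambda^n=\lambda^k g^{(k)}(\lambda)$, applying $\bigl(x\frac{d}{dx}\bigr)^\mu$ to $g$ and evaluating at $x=\lambda$ gives $s_\mu^{(\lambda)}(A)=\sum_{k=0}^\mu S(\mu,k)\lambda^k g^{(k)}(\lambda)$. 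At $\lambda=1$ this collapses to the unweighted formula; for general $\lambda$ the derivation shows the summands carry the weight $\lambda^k$, so I would flag the factor $\lambda^k$ here. The rational expression for $g^{(k)}(\lambda)$ is the one already obtained, now evaluated at the regular point $\lambda$, so no constant-term extraction is needed in this last case.
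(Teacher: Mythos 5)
Your proposal follows essentially the same route as the paper: read $g(A)$, $n(A)$, $s(A)$ off the polynomial $g(x)$ directly, expand $n^\mu=\sum_{k=0}^\mu S(\mu,k)(n)_k$ and interchange sums to get $s_\mu(A)=\sum_{k=0}^\mu S(\mu,k)g^{(k)}(1)$, obtain the rational form of $g^{(k)}(x)$ by the Leibniz rule from $g(x)=\frac{1}{1-x}-\frac{f(x)}{1-x^a}$, and evaluate at the singular point $x=1$ through the substitution $x\mapsto e^t$ and constant-term extraction, justified by the preceding Proposition and Theorem \ref{ctpolynomE}. All of this matches the paper's proof step for step.

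There is one point where you actually improve on the paper rather than merely reproduce it: your treatment of the weighted sum is the correct one, and the factor $\lambda^k$ you flagged is genuinely missing from the theorem's displayed formula. Since $g^{(k)}(x)=\sum_{n\in\mathcal{NR}}(n)_k x^{n-k}$, one has $\sum_{n\in\mathcal{NR}}\lambda^n(n)_k=\lambda^k g^{(k)}(\lambda)$, hence $s_\mu^{(\lambda)}(A)=\sum_{k=0}^\mu S(\mu,k)\,\lambda^k g^{(k)}(\lambda)$. The paper's own proof instead asserts $g^{(\mu)}(\lambda)=\sum_{n\in\mathcal{NR}}\lambda^n(n)_\mu$, which fails for $\lambda\neq 1$: already for $g(x)=x^2$ and $\mu=1$ one gets $g'(\lambda)=2\lambda$ while $\sum_n\lambda^n(n)_1=2\lambda^2$; the correct identity is $\lambda^\mu g^{(\mu)}(\lambda)=\sum_{n\in\mathcal{NR}}\lambda^n(n)_\mu$, exactly as your $\bigl(x\frac{d}{dx}\bigr)^\mu$ computation shows. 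So the discrepancy you noted is an error in the stated formula for $s_\mu^{(\lambda)}(A)$ (harmless at $\lambda=1$, where it collapses to the unweighted case, but wrong for general $\lambda$), not a gap in your argument.
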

\begin{proof}
From the definition of $g(x)$, we can easily get the formulas of $g(A)$, $n(A)$ and $s(A)$. By
$$g^{(\mu)}(x)=\sum_{n\in \mathcal{NR}}(n)_{\mu} x^{n-\mu},$$
we have $g^{(\mu)}(1)=\sum_{n\in \mathcal{NR}}(n)_{\mu}$ and
$$\sum_{n\in \mathcal{NR}}n^{\mu}=\sum_{n\in \mathcal{NR}}\sum_{k=0}^{\mu} S(\mu,k)(n)_k= \sum_{k=0}^{\mu} S(\mu,k)g^{(k)}(1).$$
So we get the formula of $s_{\mu}(A)$.

Now, let $\lambda(\ne 0,1)$, $\lambda\in \mathbb{C}$. We consider the $g(\lambda x)=\sum_{n\in \mathcal{NR}}\lambda^n x^n$.
By
$$g^{(\mu)}(\lambda x)=\sum_{n\in \mathcal{NR}}\lambda^n (n)_{\mu}x^{n-\mu},$$
we have $g^{(\mu)}(\lambda)=\sum_{n\in \mathcal{NR}}\lambda^n (n)_{\mu}$ and
$$\sum_{n\in \mathcal{NR}}\lambda^n n^{\mu}=\sum_{n\in \mathcal{NR}}\lambda^n \sum_{k=0}^{\mu} S(\mu,k)(n)_k =\sum_{k=0}^{\mu} S(\mu,k) g^{(k)}(\lambda).$$
This completes the proof.
\end{proof}

We find that the formula of $s_{\mu}(A)$ is slightly complicated in Theorem \ref{gnsssmula}. Therefore, the following definition may be more natural. Define $\widehat{s}_{\mu}(A)=\sum_{n\in \mathcal{NR}}\binom{n}{\mu}$ to be {\it Sylvester binomial moment}. Then we have
\begin{align*}
\widehat{s}_{\mu}(A)=\sum_{n\in \mathcal{NR}}\binom{n}{\mu}=\frac{1}{\mu !}g^{(\mu)}(1)=\frac{1}{\mu !}\underset{t}{\mathrm{CT}}g^{(\mu)}(e^t).
\end{align*}

\begin{exa}
We consider the arithmetic sequence $A=(a,ha+d,ha+2d,..., ha+kd)$, $1\leq k\leq a-1$. By $N_{dr}=ha\lceil \frac{r}{k}\rceil+dr$ and $a-1=sk+r_1, s\geq 0, 0\leq r_1\leq k-1$, we have
\begin{small}
\begin{align*}
f(x)&=\sum_{r=0}^{a-1}x^{N_{dr}}
\\&=1+\frac{1-x^{dk}}{1-x^d}(x^{ha+d}+x^{2ha+d(k+1)}+\cdots +x^{sha+d((s-1)k+1)})+x^{(s+1)ha+d(sk+1)}\frac{1-x^{dr_1}}{1-x^d}
\\&=1+\frac{1-x^{dk}}{1-x^d}\cdot \frac{x^{ha+d}(1-x^{(ha+dk)s})}{1-x^{ha+dk}}
+x^{(s+1)ha+d(sk+1)}\frac{1-x^{dr_1}}{1-x^d}.
\end{align*}
\end{small}
Maple can do derivative quickly, or one can compute by hand, to obtain
\begin{small}
\begin{align*}
f^{\prime}(x)=&-{\frac {{x}^{dk}dk{x}^{ah+d} \left( 1-{x}^{ \left( ah+dk \right) s}
 \right) }{x \left( 1-{x}^{d} \right)  \left( 1-{x}^{ah+dk} \right) }}
+{\frac { \left( 1-{x}^{dk} \right) {x}^{ah+d} \left( 1-{x}^{ \left( a
h+dk \right) s} \right) {x}^{d}d}{ \left( 1-{x}^{d} \right) ^{2}
 \left( 1-{x}^{ah+dk} \right) x}}
\\&+{\frac { \left( 1-{x}^{dk} \right) {
x}^{ah+d} \left( ah+d \right)  \left( 1-{x}^{ \left( ah+dk \right) s}
 \right) }{x \left( 1-{x}^{d} \right)  \left( 1-{x}^{ah+dk} \right) }}
-{\frac { \left( 1-{x}^{dk} \right) {x}^{ah+d}{x}^{ \left( ah+dk
 \right) s} \left( ah+dk \right) s}{x \left( 1-{x}^{d} \right)
 \left( 1-{x}^{ah+dk} \right) }}
\\&+{\frac { \left( 1-{x}^{dk} \right) {x
}^{ah+d} \left( 1-{x}^{ \left( ah+dk \right) s} \right) {x}^{ah+dk}
 \left( ah+dk \right) }{ \left( 1-{x}^{d} \right)  \left( 1-{x}^{ah+dk
} \right) ^{2}x}}
\\&+{\frac {{x}^{h \left( s+1 \right) a+d \left( ks+1
 \right) } \left( h \left( s+1 \right) a+d \left( ks+1 \right)
 \right)  \left( 1-{x}^{d{r_1}} \right) }{x \left( 1-{x}^{d}
 \right) }}
\\&-{\frac {{x}^{h \left( s+1 \right) a+d \left( ks+1 \right)
}{x}^{d{r_1}}d{r_1}}{x \left( 1-{x}^{d} \right) }}+{\frac {{x}^{
h \left( s+1 \right) a+d \left( ks+1 \right) } \left( 1-{x}^{d{r_1}
} \right) {x}^{d}d}{ \left( 1-{x}^{d} \right) ^{2}x}}
\\:=&f_1+f_2+f_3+f_4+f_5+f_6+f_7+f_8.
\end{align*}
\end{small}
Now, we have $\CT_t f^{\prime}(e^t)=\sum_{i=1}^8 \CT_t f_i(e^t)$. Let's compute the first term as an example.
The simplest way using Maple is to take series expansion of $f_1(t)$ and then take the constant term. This works fine for this problem,
but becomes slow when the \emph{pole order} $m$ at $t=0$ is big. Here we mean the Laurent series of $f_1(t)$ is of the form
$\sum_{j=m}^\infty a_j t^j$ with $a_m\neq 0$.
We explain how to use Theorem \ref{ctpolynomE} to do the computation.

Observe that $t f_1(e^t)=\ell_0 +\ell_1 t+o(t)$ is a power series, and our task is to compute $\ell_1$.
The pole order of $f_1(e^t)$ is 1.
Thus we can write
\begin{small}
\begin{align*}
tf_1(e^t)&=- dk \left( {{e}^{t}}
 \right) ^{dk-1+ah+d} \cdot \frac{1- \left( {{e}^{t}} \right) ^{ \left( ah+d
k \right) s}}{t}  \cdot \frac{t} { 1- \left( {{e}^{t}}
 \right) ^{d}} \cdot \frac{t}{ 1- \left( {{e}^{t}} \right) ^{ah+dk}
 }\\
 &=- dk(1+(dk-1+ah+d)t) \cdot (ah+dk)s(-1-\frac12(ah+dk)st)\\
 &\quad\quad \cdot \frac1d(-1+\frac{d}{2}t) \cdot \frac1{ah+dk}(-1+\frac12(ah+dk)t)
 ,
\end{align*}
\end{small}
where we have omitted the $o(t)$ terms for each factor (separated by $\cdot$). We obtain
\begin{align*}
f_1(1)=f_1(e^0)=\underset{t}{\mathrm{CT}}f_1(e^t)=  \frac{ks}{2}(ahs+dks+ah+dk+d-2).
\end{align*}
The above steps can be carried out by a Maple procedure.

Similarly, we can solve $\underset{t}{\mathrm{CT}}f_i(e^t)$ for all $i$, and obtain
$$f^{\prime}(1)=\underset{t}{\mathrm{CT}}f^{\prime}(e^t)
=\sum_{i=1}^8\underset{t}{\mathrm{CT}}f_i(e^t)=\frac{a(a-1)d}{2}+ha(s+1)(a-1-\frac{ks}{2}).$$
Then we can get $n(A)$. The result agrees with that in Theorem \ref{lsf}.

In the same way, we can calculate the higher order derivatives to get $s(A)$ and $s_{\mu}(A)$.
\end{exa}

By programming the procedure of ``Extract Constant Term" above in Maple, once we get $f(x)$, we can quickly obtain $n(A)$ and $s(A)$, and even more formulas. In Appendix, we only give $f(x)$ for the previous theorems. We calculate their $n(A)$ and $s(A)$ by the Maple, which agree with known results. Furthermore, we obtain the following result, which seems too complicated for direct computation using the explicit formula of $N_r$.

\begin{thm}
Let $A=(a, ha+d, ha+2d, ha+jd)$,  $a, j, h, d\in \mathbb{P}, h\geq d$. The restrictions are the same as in Theorem \ref{012j}. Therefore, we have
\begin{align*}
s(A)&=\frac{1}{24a}\Big({a}^{2}{h}^{2}{j}^{3}k+3{a}^{2}{h}^{2}{j}^{2}{k}^{2}+4{a}^{2}{h}^{
2}j{k}^{3}+3adh{j}^{3}{k}^{2}+8adh{j}^{2}{k}^{3}+4{d}^{2}{j}^{3}
{k}^{3}-3{j}^{2}{a}^{2}{h}^{2}k
\\& -3{a}^{2}{h}^{2}{j}^{2}t-6{a}^{2}
{h}^{2}j{k}^{2}-12{a}^{2}{h}^{2}jkt+3{a}^{2}{h}^{2}j{t}^{2}-12{a
}^{2}{h}^{2}{k}^{2}t+6{a}^{2}{h}^{2}k{t}^{2}-{a}^{2}{h}^{2}{t}^{3}
\\& +adh{j}^{3}k-6adh{j}^{2}{k}^{2}-12adh{j}^{2}kt-24adhj{k}^{2}t+6a
dhjk{t}^{2}-12{d}^{2}{j}^{2}{k}^{2}t
\\&+12{a}^{2}{h}^{2}jt+24{a}^{2}{h}^{2}kt-6{a}^{2}{h}^{2}{t}^{2}-3{a}^{2}h{j}^{2}k-6{a}^{2}hj{k
}^{2}-5adh{j}^{2}k+24adhjkt+6adhj{t}^{2}
\\&+12adhk{t}^{2}-4adh{t}^{3}-6ad{j}^{2}{k}^{2}-6{d}^{2}{j}^{2}{k}^{2}+12{d}^{2}jk{t}^{
2}+6{a}^{2}hjk+6{a}^{2}hjt+12{a}^{2}hkt
\\& -3{a}^{2}h{t}^{2}+6adhjt+12adhkt-15adh{t}^{2}+12adjkt+12{d}^{2}jkt-4{d}^{2}{t}^{3
}-12{a}^{2}ht+6adjk
\\&-6ad{t}^{2} +2{d}^{2}jk-6{d}^{2}{t}^{2}+2{a}^{3}-6adt-2{d}^{2}t-2a
+\widetilde{s}(A)\Big),
\end{align*}
where, if $j-1$ is even and $t$ is even,
$$\widetilde{s}(A)=j{a}^{2}{h}^{2}k-3{a}^{2}{h}^{2}{k}^{2}-9adhj{k}^{2}+3{a}^{2}{h}^{2}k
-11{a}^{2}{h}^{2}t+5adhjk+3{a}^{2}hk+3adhk-8adht,$$
if $j-1$ is even and $t$ is odd,
\begin{align*}
\widetilde{s}(A)&=j{a}^{2}{h}^{2}k-3{a}^{2}{h}^{2}{k}^{2}-9adhj{k}^{2}+3{a}^{2}{h}^{2}j
+9{a}^{2}{h}^{2}k-14{a}^{2}{h}^{2}t+11adhjk
\\&\ \ \ -6{a}^{2}{h}^{2}+3{a}^{2}hk
+3adhk-14adht-3{a}^{2}h-3adh,
\end{align*}
if $j-1$ is odd and $t$ is odd,
\begin{align*}
\widetilde{s}(A)&=4j{a}^{2}{h}^{2}k-6adhj{k}^{2}-3{a}^{2}{h}^{2}j-6{a}^{2}{h}^{2
}k-11{a}^{2}{h}^{2}t+2adhjk+6{a}^{2}{h}^{2}
\\&\ \ \ -8adht+3{a}^{2}h+3adh,
\end{align*}
if $j-1$ is odd and $t$ is even,
\begin{align*}
\widetilde{s}(A)=4j{a}^{2}{h}^{2}k-6adhj{k}^{2}-14{a}^{2}{h}^{2}t+8adhjk-14adht.
\end{align*}
\end{thm}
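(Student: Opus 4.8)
The plan is to sidestep the direct evaluation of $\sum_{r=1}^{a-1}N_{dr}^2$, which was already flagged as intractable in the proof of Theorem \ref{012j}, and instead route the computation through the Constant Term Method of this section. Set $f(x)=\sum_{r=0}^{a-1}x^{N_{dr}}$; since $\{N_{dr}:0\le r\le a-1\}$ is a permutation of $\{N_r\}$ by Proposition \ref{0201}, Lemma \ref{LiuXin001} applies verbatim with $N_r$ replaced by $N_{dr}$. Using $N_{d\cdot 0}=0$ together with $f'(1)=\sum_{r=1}^{a-1}N_{dr}$ and $f''(1)=\sum_{r=1}^{a-1}N_{dr}^2-\sum_{r=1}^{a-1}N_{dr}$, the crude formula of Lemma \ref{LiuXin001} rearranges into
\[
s(A)=\frac{1}{2a}f''(1)-\frac{a-1}{2a}f'(1)+\frac{a^2-1}{12}.
\]
Thus the whole task reduces to writing $f(x)$ as a short sum of simple Elliott rational functions and extracting two constant terms.

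First I would build $f(x)$ from the explicit formula $N_{dr}=(ha+jd)s+ha\lceil r_1/2\rceil+r_1 d$ established in the proof of Theorem \ref{012j}, where $r=js+r_1$ with $0\le r_1<j$. Writing $a=kj-t$, the range $0\le r\le a-1$ splits into $k-1$ complete blocks ($s=0,\dots,k-2$, each with $r_1=0,\dots,j-1$) and one truncated block ($s=k-1$, $r_1=0,\dots,j-t-1$). Summing the outer geometric progression in $s$, and inside each block separating the sum by the parity of $r_1$ to linearize $\lceil r_1/2\rceil$ (even $r_1=2i$ contributes exponent $i(ha+2d)$, odd $r_1=2i+1$ contributes $i(ha+2d)+ha+d$), one obtains
\[
f(x)=\frac{1-x^{(k-1)(ha+jd)}}{1-x^{ha+jd}}\,G(x)+x^{(k-1)(ha+jd)}H(x),
\]
where $G(x)$ and $H(x)$ are each a sum of two geometric series with common ratio $x^{ha+2d}$ coming from the even and odd residues. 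The precise ranges of these inner series are governed by the parities of $j$ (equivalently $j-1$) and of $t$, which fix the counts of even and odd residues in the full and truncated blocks; this is exactly the source of the four $\widetilde{s}(A)$ cases in the statement.

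Next I would apply the Constant Term Method. Differentiating the rational representation of $f(x)$ once and twice gives $f'(x)$ and $f''(x)$ as short sums of simple Elliott rational functions; under $x\mapsto e^t$ each summand is expanded as a Laurent series in $t$ via \eqref{tet} and \eqref{et}, and Theorem \ref{ctpolynomE} extracts $f'(1)=\CT_t f'(e^t)$ and $f''(1)=\CT_t f''(e^t)$ term by term, exactly as in the worked arithmetic-sequence example above. Substituting the resulting polynomials in $a,h,d,j,k,t$ into the displayed formula for $s(A)$, and collecting the parity-dependent pieces into $\widetilde{s}(A)$, yields the claimed expression.

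All conceptual content sits in the first two steps; the main obstacle is purely bookkeeping. The second derivative of a quotient of several rational factors multiplies the number of terms substantially, and each of the four parity cases for $(j,t)$ must be carried through the constant-term extraction separately, since the explicit forms of $G(x)$ and $H(x)$ — and hence the Laurent expansions fed into Theorem \ref{ctpolynomE} — differ between cases. This is mechanical rather than subtle, and, as emphasized in this section, is most safely executed by the Maple implementation of Theorem \ref{ctpolynomE}; correctness is then cross-checked against the independently derived formula for $n(A)$ in Theorem \ref{012j} and against numerical values of $s(A)$ obtained from \texttt{FrobeniusSolve}.
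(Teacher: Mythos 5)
Your proposal is correct and follows essentially the same route as the paper: the paper likewise builds the four parity-dependent rational-function representations of $f(x)=\sum_{r=0}^{a-1}x^{N_{dr}}$ (its Appendix Proposition \ref{aha12jdd}) from the explicit $N_{dr}=(ha+jd)s+ha\lceil r_1/2\rceil+dr_1$ of Theorem \ref{012j}, and then extracts $f'(1)$ and $f''(1)$ by the constant term method of Theorem \ref{ctpolynomE} implemented in Maple, exactly as you describe. One small point in your favor: your rearranged identity $s(A)=\frac{1}{2a}f''(1)-\frac{a-1}{2a}f'(1)+\frac{a^2-1}{12}$ carries the correct constant term, whereas the paper's Section 5 theorem misprints it as $\frac{a^2-1}{2}$.
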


\section{Future Projects}
We established a combinatorial approach to Frobenius numbers of some special sequences of the form $A=(a,ha+dB)$.
If the optimization problem $O_B(M)$ is easy to solve, then we can obtain explicit formula of $N_r$, and hence possibly solve the Frobenius problem.

This approach succeeds for many special sequences, and we obtained many formulas that had not appeared before.
If we are lucky, we can write $f(x)$ as a sum of simple rational functions,
from which we can easily deduce $g(A)$, and compute $n(A)$, $s(A)$, $s_{\mu}(A)$ and $s_{\mu}^{(\lambda)}(A)$ by ``Extracting constant terms".

Our next project is to study the case $A=(a,ha+dB)$ when $O_B(M)$ can be solved by the greedy algorithm.
We hope to find a way to efficiently write the corresponding $f(x)$ as a short positive sum of simple rational functions.
The project is supported by the following two facts. There is a large class of $B$ such that $O_B(M)$. See \cite{AnnAdamaszek}.
Directly analyse from $N_r$ may become pretty hard as can be seen from the case $B=(1,2,j)$ in subsection \ref{hard12jj}.
However, the corresponding $f(x)$ is not bad.

One of our future projects is to generalize our combinatorial approach to case $A=(a, h_1a+b_1, h_2a+b_2,\dots, h_ka+b_k)$. Then we need to solve a weighted version of the $O_B(M)$ problem:
$$O_{B}(M):=\min\Big\{\sum_{i=1}^kh_ix_i \mid \sum_{i=1}^k b_ix_i=M, \ x_i\in\mathbb{N}, 1\leq i\leq k\Big\}.$$
This consideration is natural from the denumerant side by the CTEuclid algorithm.

\appendix
\section{the formula of $f(x)$}

\begin{prop}
Let $A=(a, ha+d, ha+jd)$,  $a, j>2$,  and $a=kj-t,  k\geq 1,  0\leq t\leq j-1, gcd(a,d)=1, d\leq h$. Suppsoe $hk+d-ht\geq 0$. Then
\begin{align*}
f(x)=\frac{1-x^{j(ha+d)}}{1-x^{ha+d}}\cdot \frac{(1-x^{(ha+jd)(k-1)})}{1-x^{ha+jd}}
+\frac{x^{(k-1)(ha+jd)}(1-x^{(ha+d)(j-t)})}{1-x^{ha+d}}.
\end{align*}
\end{prop}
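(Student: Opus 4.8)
The plan is to turn the generating function $f(x)=\sum_{r=0}^{a-1}x^{N_r}$ into a doubly-indexed sum that factors as a product of two finite geometric series. The starting point is the explicit evaluation of $N_{dr}$ already obtained inside the proof of Theorem \ref{222}: under the hypothesis $hk+d-ht\ge 0$ the intermediate function $N_{dr}(m)$ is increasing in $m$, so $N_{dr}=N_{dr}(0)$, and writing $r=sj+r_1$ with $0\le r_1\le j-1$ and $s=\lfloor r/j\rfloor$ gives
\begin{equation*}
N_{dr}=s(ha+jd)+(ha+d)r_1.
\end{equation*}
Since $\gcd(a,d)=1$, Proposition \ref{0201} lets me replace the index set $\{N_0,\dots,N_{a-1}\}$ by $\{N_{d\cdot 0},\dots,N_{d\cdot(a-1)}\}$, so that $f(x)=\sum_{r=0}^{a-1}x^{N_{dr}}$ with the displayed exponent.

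Next I would reorganize the range $0\le r\le a-1$ along the decomposition $r=sj+r_1$. Because $a=kj-t$, we have $a-1=(k-1)j+(j-t-1)$, so as $s$ runs from $0$ to $k-2$ the residue $r_1$ sweeps the full block $0\le r_1\le j-1$, while for the last value $s=k-1$ the residue is truncated to $0\le r_1\le j-t-1$. This splits $f(x)$ as
\begin{equation*}
f(x)=\sum_{s=0}^{k-2}\sum_{r_1=0}^{j-1}x^{s(ha+jd)+(ha+d)r_1}+\sum_{r_1=0}^{j-t-1}x^{(k-1)(ha+jd)+(ha+d)r_1}.
\end{equation*}
The first (complete) part factors as $\bigl(\sum_{s=0}^{k-2}x^{s(ha+jd)}\bigr)\bigl(\sum_{r_1=0}^{j-1}x^{(ha+d)r_1}\bigr)$, which by the finite geometric series formula is exactly $\frac{1-x^{(ha+jd)(k-1)}}{1-x^{ha+jd}}\cdot\frac{1-x^{j(ha+d)}}{1-x^{ha+d}}$; the second (incomplete) part pulls out the common factor $x^{(k-1)(ha+jd)}$ and sums to $x^{(k-1)(ha+jd)}\frac{1-x^{(ha+d)(j-t)}}{1-x^{ha+d}}$. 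Together these give the asserted formula.

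The computation is essentially routine once the reindexing is justified; the one place demanding care is the boundary bookkeeping of the last, possibly incomplete, block. I would check the extreme cases explicitly: when $t=0$ one has $a=kj$ and the ``incomplete'' block is in fact full ($j-t-1=j-1$), so the two summands should recombine into $k$ complete blocks, and the formula must stay consistent; one should also confirm that the upper limit $s=k-2$ (rather than $k-1$) in the complete part is correct and that no block is double counted. The hypotheses $d\le h$ and $hk+d-ht\ge 0$ enter only through Theorem \ref{222}, to guarantee $N_{dr}=N_{dr}(0)$; beyond that they play no role in the manipulation, so the main obstacle is purely the careful splitting of the summation range rather than any analytic difficulty.
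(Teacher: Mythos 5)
Your proof is correct and follows exactly the derivation the paper intends (the appendix states this proposition without proof, but it is the direct consequence of the formula $N_{dr}=N_{dr}(0)=s(ha+jd)+(ha+d)r_1$ established in the proof of Theorem \ref{222}, combined with Proposition \ref{0201} and the split of $0\le r\le a-1$ into $k-1$ complete blocks plus one block of length $j-t$). Your boundary checks — the $t=0$ recombination, the emptiness of the first sum when $k=1$, and the term count $(k-1)j+(j-t)=a$ — are precisely the right sanity checks, and your observation that $d\le h$ is not needed for $f(x)$ itself (only $hk+d-ht\ge 0$ is) is accurate.
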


\begin{prop}\label{aha12jdd}
Let $A=(a, ha+d, ha+2d, ha+jd)$,  $h\geq d$, $a, j\in \mathbb{N}$,  $a\geq 2$ and $j\geq 4$. Moreover let $a=kj-t$,  where $k\geq 1$ and $0\leq t\leq j-1$,  and we require $k+1-\lceil \frac{t}{2}\rceil\geq 0$.

If $j-1$ is even and $t$ is even, we have
\begin{align*}
f(x)=&\frac{1-x^{(ha+jd)k}}{1-x^{ha+jd}}
+\frac{(1-x^{\frac{(ha+2d)(j-1)}{2}})(1-x^{(ha+jd)(k-1)})(x^{ha+d}+x^{ha+2d})}{(1-x^{ha+2d})(1-x^{ha+jd})}
\\&+\frac{(1-x^{\frac{(ha+2d)(j-t-1)}{2}})(x^{kha+((k-1)j+1)d}+x^{kha+((k-1)j+2)d})}{1-x^{ha+2d}}.
\end{align*}

If $j-1$ is even and $t$ is odd, we have
\begin{align*}
f(x)=&\frac{1-x^{(ha+jd)k}}{1-x^{ha+jd}}
+\frac{(1-x^{\frac{(ha+2d)(j-1)}{2}})(1-x^{(ha+jd)(k-1)})(x^{ha+d}+x^{ha+2d})}{(1-x^{ha+2d})(1-x^{ha+jd})}
\\&+\frac{x^{kha+((k-1)j+1)d}(1-x^{\frac{(ha+2d)(j-t)}{2}})}{1-x^{ha+2d}}
+\frac{x^{kha+((k-1)j+2)d}(1-x^{\frac{(ha+2d)(j-t-2)}{2}})}{1-x^{ha+2d}}.
\end{align*}

If $j-1$ is odd and $t$ is odd, we have
\begin{align*}
f(x)=&\frac{1-x^{(ha+jd)k}}{1-x^{ha+jd}}
+\frac{(1-x^{(ha+jd)(k-1)})(x^{ha+d}(1-x^{\frac{(ha+2d)j}{2}})
+x^{ha+2d}(1-x^{\frac{(ha+2d)(j-2)}{2}}))}{(1-x^{ha+2d})(1-x^{ha+jd})}
\\&+\frac{(1-x^{\frac{(ha+2d)(j-t-1)}{2}})(x^{kha+((k-1)j+1)d}+x^{kha+((k-1)j+2)d})}{1-x^{ha+2d}}.
\end{align*}

If $j-1$ is odd and $t$ is even, we have
\begin{align*}
f(x)=&\frac{1-x^{(ha+jd)k}}{1-x^{ha+jd}}
+\frac{(1-x^{(ha+jd)(k-1)})(x^{ha+d}(1-x^{\frac{(ha+2d)j}{2}})
+x^{ha+2d}(1-x^{\frac{(ha+2d)(j-2)}{2}}))}{(1-x^{ha+2d})(1-x^{ha+jd})}
\\&+\frac{x^{kha+((k-1)j+1)d}(1-x^{\frac{(ha+2d)(j-t)}{2}})}{1-x^{ha+2d}}
+\frac{x^{kha+((k-1)j+2)d}(1-x^{\frac{(ha+2d)(j-t-2)}{2}})}{1-x^{ha+2d}}.
\end{align*}
\end{prop}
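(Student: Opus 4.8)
The plan is to derive the four generating-function formulas directly from the explicit description of $N_{dr}$ obtained in Theorem \ref{012j}. Under the standing hypothesis $k+1-\lceil \frac{t}{2}\rceil\geq 0$, the proof of Theorem \ref{012j} shows $N_{dr}=N_{dr}(0)$, and that if $r=js+r_1$ with $0\leq r_1<j$ then $N_{dr}=(ha+jd)s+ha\lceil \frac{r_1}{2}\rceil+dr_1$. Since $a=kj-t$, as $r$ runs over $\{0,1,\dots,a-1\}$ the pair $(s,r_1)$ runs over all $(s,r_1)$ with $0\leq s\leq k-2$, $0\leq r_1\leq j-1$, together with $s=k-1$, $0\leq r_1\leq j-t-1$. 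First I would substitute this into $f(x)=\sum_{r=0}^{a-1}x^{N_{dr}}$ and use that the exponent splits as $(ha+jd)s$ plus a term depending only on $r_1$, so that each inner sum over $r_1$ factors out of the sum over $s$.

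The key reduction is to isolate the $r_1=0$ contributions: these are $x^{(ha+jd)s}$ for every $s\in\{0,\dots,k-1\}$ (note $r_1=0$ is admissible at $s=k-1$ since $j-t-1\geq 0$), and they sum to $\frac{1-x^{(ha+jd)k}}{1-x^{ha+jd}}$, the first term common to all four formulas. The remaining $r_1\geq 1$ terms decompose as $\big(\sum_{s=0}^{k-2}x^{(ha+jd)s}\big)P(j-1)+x^{(ha+jd)(k-1)}P(j-t-1)$, where I set $P(m):=\sum_{r_1=1}^{m}x^{ha\lceil r_1/2\rceil+dr_1}$, and I evaluate the $s$-sum as $\frac{1-x^{(ha+jd)(k-1)}}{1-x^{ha+jd}}$.

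Next I would evaluate $P(m)$ by separating even and odd $r_1$. Writing $r_1=2i$ gives exponent $i(ha+2d)$, while $r_1=2i-1$ gives exponent $i(ha+2d)-d=(ha+d)+(i-1)(ha+2d)$; thus the odd-indexed terms are $x^{ha+d}$ times a geometric series in $x^{ha+2d}$, and the even-indexed terms are $x^{ha+2d}$ times such a series. The crux is that the upper limits of these two subsums depend on the parity of $m$: when $m$ is even the two subsums have equal length and combine to $(x^{ha+d}+x^{ha+2d})\cdot\frac{1-x^{(ha+2d)m/2}}{1-x^{ha+2d}}$, whereas when $m$ is odd they differ in length by one and must be kept separate, as $\frac{x^{ha+d}(1-x^{(ha+2d)(m+1)/2})}{1-x^{ha+2d}}+\frac{x^{ha+2d}(1-x^{(ha+2d)(m-1)/2})}{1-x^{ha+2d}}$. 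Applying this to $P(j-1)$ (parity governed by $j-1$) and to $P(j-t-1)$ (parity governed by $j-t-1$, i.e.\ by the parities of $j$ and $t$ jointly) produces exactly the four cases. A final bookkeeping step uses $(k-1)(ha+jd)+ha+d=kha+((k-1)j+1)d$ and $(k-1)(ha+jd)+ha+2d=kha+((k-1)j+2)d$ to match the stated exponents in the boundary term.

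The main obstacle is the parity bookkeeping in the evaluation of $P(m)$: correctly tracking the top index of the even and odd subsums (for instance $(j-1)/2$ versus $j/2$, and $(j-t-1)/2$ versus $(j-t)/2$) as the parities of $j$ and $t$ vary, and ensuring the shifted geometric series carry the correct exponents $(ha+2d)(m\pm1)/2$. Once $P(j-1)$ and $P(j-t-1)$ are pinned down in each of the four parity combinations, combining them with the two geometric $s$-sums and the isolated $r_1=0$ term is routine and yields the four displayed formulas.
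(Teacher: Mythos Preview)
Your proposal is correct. The paper states this proposition in the appendix without proof, but your derivation is exactly the computation the paper's framework dictates: feed the explicit formula $N_{dr}=(ha+jd)s+ha\lceil r_1/2\rceil+dr_1$ from Theorem~\ref{012j} into $f(x)=\sum_{r=0}^{a-1}x^{N_{dr}}$, separate the $r_1=0$ contributions, factor the remaining double sum, and evaluate $P(m)$ as a pair of geometric series whose shape depends on the parity of $m$. The parity bookkeeping you outline (for $m=j-1$ and $m=j-t-1$) and the exponent identities $(k-1)(ha+jd)+ha+d=kha+((k-1)j+1)d$, $(k-1)(ha+jd)+ha+2d=kha+((k-1)j+2)d$ are exactly what is needed to land on the four displayed formulas.
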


\begin{prop}
Let $A=(a, ha-d, ha+d)$,  $d, h\in \mathbb{P}, \gcd(a, d)=1$, $ha-d>1$ and
$r_1=\lfloor \frac{ha-d}{2h}\rfloor$. Then
\begin{align*}
f(x)=\frac{1-x^{(ha+d)(r_1+1)}}{1-x^{ha+d}}
+\frac{x^{ha-d}(1-x^{(ha-d)(a-r_1-1)})}{1-x^{ha-d}}.
\end{align*}
\end{prop}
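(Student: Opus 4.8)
The plan is to obtain $f(x)$ directly from the explicit piecewise description of $N_{dr}$ already established in the proof of Theorem~\ref{wehaha}, and then to collapse the two resulting geometric sums into closed form. Since $\gcd(a,d)=1$, Proposition~\ref{0201} tells us that $\{N_0,N_1,\dots,N_{a-1}\}=\{N_{d\cdot 0},N_{d\cdot 1},\dots,N_{d\cdot(a-1)}\}$ as sets, and because the $a$ values are pairwise distinct (they lie in distinct residue classes mod $a$) this is also an equality of multisets. Hence
$$f(x)=\sum_{r=0}^{a-1}x^{N_r}=\sum_{r=0}^{a-1}x^{N_{dr}},$$
so it suffices to evaluate the latter sum.

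First I would recall from the proof of Theorem~\ref{wehaha} that, writing $r_1=\lfloor\frac{ha-d}{2h}\rfloor$ (the quantity called $s$ there), one has $N_{dr}=(ha+d)r$ for $0\le r\le r_1$ and $N_{dr}=(ha-d)(a-r)$ for $r_1<r\le a-1$. I would then split the sum at $r_1$:
$$f(x)=\sum_{r=0}^{r_1}x^{(ha+d)r}+\sum_{r=r_1+1}^{a-1}x^{(ha-d)(a-r)}.$$
The first sum is a finite geometric series in the variable $x^{ha+d}$ with $r_1+1$ terms, which evaluates to $\frac{1-x^{(ha+d)(r_1+1)}}{1-x^{ha+d}}$. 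For the second sum I would reindex by $\ell=a-r$, so that $\ell$ runs from $1$ to $a-r_1-1$, turning it into $\sum_{\ell=1}^{a-r_1-1}x^{(ha-d)\ell}=\frac{x^{ha-d}\bigl(1-x^{(ha-d)(a-r_1-1)}\bigr)}{1-x^{ha-d}}$. Adding the two closed forms gives exactly the asserted expression for $f(x)$.

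This argument is essentially bookkeeping, so I do not anticipate a genuine obstacle; the only points needing care are the boundary index and the reindexing. At $r=(ha-d)/2h$, when this happens to be an integer, the two branches agree since there $(ha+d)r=(ha-d)(a-r)$, so assigning that index to either piece is consistent and does not affect $f(x)$. The reindexing of the second sum must be checked to produce precisely the exponent $(ha-d)(a-r_1-1)$ at the top and the prefactor $x^{ha-d}$, matching the stated formula.
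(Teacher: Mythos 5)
Your proposal is correct and follows essentially the same route the paper intends: the proposition is exactly the generating-function restatement of the piecewise formula $N_{dr}=(ha+d)r$ for $0\le r\le r_1$ and $N_{dr}=(ha-d)(a-r)$ for $r_1<r\le a-1$ established in the proof of Theorem~\ref{wehaha}, summed as two geometric series. Your extra care about the boundary (the two branches agree when $(ha-d)/2h$ is an integer, since $(ha+d)r=(ha-d)(a-r)$ exactly there) and about passing from $\sum_r x^{N_r}$ to $\sum_r x^{N_{dr}}$ via Proposition~\ref{0201} is sound and consistent with the paper's split at $s=\lfloor\frac{ha-d}{2h}\rfloor$ in its computation of $n(A)$ and $s(A)$.
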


\begin{prop}
Let $A=(a^2, ha^2+d, ha^2+ad, ha^2+(a+1)d)$,  $a>1, gcd(a,d)=1$. We have
\begin{align*}
f(x)&=\sum_{m=0}^{a-1}\sum_{i=0}^{a-m-1}x^{(m+i)ha^2+(m(a+1)+i)d}
+\sum_{m=0}^{a-2}\sum_{i=0}^mx^{(m+1)ha^2+((m+1)a+i)d}
\\&=\frac{x^{a(ha^2+ad+d)}-x^{a(ha^2+d)}}{(1-x^{ad})(1-x^{ha^2+d})}
+\frac{1-x^{a(ha^2+ad+d)}}{(1-x^{ha^2+d})(1-x^{ha^2+ad+d})}
\\&+\frac{x^{a(ha^2+ad+d)}-x^{ha^2+ad+d}}{(1-x^{ha^2+ad+d})(1-x^d)}
+\frac{x^{ha^2+ad}-x^{a^2(ha+d)}}{(1-x^{ha^2+ad})(1-x^d)}.
\end{align*}
\end{prop}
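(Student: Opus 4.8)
The statement asserts two equalities: a double-sum expansion of $f(x)=\sum_{r=0}^{a^2-1}x^{N_r}$ and its collapse into four Elliott-rational terms. I would prove them in that order. Throughout write $T_1,T_2,T_3,T_4$ for the four rational summands on the right-hand side, in the order displayed.

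\textbf{Stage 1: the double-sum form.} Since $\gcd(a,d)=1$ gives $\gcd(a^2,d)=1$, Proposition \ref{0201} lets me replace $f(x)$ by $\sum_{r=0}^{a^2-1}x^{N_{dr}}$. From the treatment of this $A$ (Lemmas \ref{0202} and \ref{1aa1}) I may assume, as established there, that $N_{dr}=N_{dr}(0)=O_B(r)\,ha^2+rd$ and that an optimal point of $O_B(r)=\min\{x_1+x_2+x_3\mid x_1+ax_2+(a+1)x_3=r\}$ has $x_1=0$ or $x_2=0$. The plan is to reindex the sum over $r$ by this optimal point $(x_1,x_2,x_3)$. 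Writing $r=(a+1)s+r_1$ with $0\le r_1\le a$: in the case where the optimum has $x_2=0$ (the regime $0\le r_1<a-s$), I set $m=x_3=s$ and $i=x_1=r_1$, so that $r=(a+1)m+i$ and $N_{dr}=(m+i)ha^2+((a+1)m+i)d$, the first summand; in the case $x_1=0$, $x_2\ge 1$ (the regime $r_1\neq 0$, $a-s\le r_1\le a$), I set $i=x_3$ and $m+1=x_2+x_3$, so that $x_2=m+1-i$, $r=(m+1)a+i$, and $N_{dr}=(m+1)ha^2+((m+1)a+i)d$, the second summand.

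\textbf{Stage 2: collapsing to rational functions.} Each double sum factors into finite geometric series. Abbreviating $P=ha^2+ad+d$, $Q=ha^2+d$, $R=ha^2+ad$, the two summand-exponents are $mP+iQ$ and $(m+1)R+id$, so summing over $i$ first gives
\[
S_1=\frac{1}{1-x^{Q}}\sum_{m=0}^{a-1}x^{mP}\bigl(1-x^{(a-m)Q}\bigr),\qquad
S_2=\frac{1}{1-x^{d}}\sum_{n=1}^{a-1}x^{nR}\bigl(1-x^{nd}\bigr).
\]
Using the exponent identities $P-Q=ad$ and $R+d=P$, the remaining geometric sums evaluate to
\[
S_1=\frac{1-x^{aP}}{(1-x^{Q})(1-x^{P})}-\frac{x^{aQ}\bigl(1-x^{a^2 d}\bigr)}{(1-x^{Q})(1-x^{ad})},\qquad
S_2=\frac{x^{R}-x^{aR}}{(1-x^{d})(1-x^{R})}-\frac{x^{P}-x^{aP}}{(1-x^{d})(1-x^{P})}.
\]
Since $aR=a^2(ha+d)$, the two pieces of $S_2$ are exactly $T_4$ and $T_3$. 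The first piece of $S_1$ is $T_2$; for the second I would invoke the key identity $aQ+a^2 d=aP$, which turns it into $\dfrac{x^{aP}-x^{aQ}}{(1-x^{ad})(1-x^{Q})}=T_1$. Adding, $f(x)=S_1+S_2=T_1+T_2+T_3+T_4$.

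\textbf{Main obstacle.} Stage 2 is routine once the three exponent identities $P-Q=ad$, $R+d=P$, $aQ+a^2 d=aP$ are noticed. The crux is the bookkeeping in Stage 1: I must check that the index ranges $0\le i\le a-m-1$ (first sum) and $0\le i\le m\le a-2$ (second sum) are precisely the images of the two optimality regimes under the reindexing, that the regimes are disjoint and together exhaust $\{0,1,\dots,a^2-1\}$, and in particular that for $s=a-1$ only the residue $r_1=0$ survives---contributing to the first sum and nothing to the second, which is exactly why the second sum terminates at $m=a-2$.
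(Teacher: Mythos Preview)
Your proposal is correct. The paper states this proposition in the Appendix without proof; the double-sum form of $f(x)$ is implicitly established in the proof of the theorem on $A=(a^2,ha^2+d,ha^2+ad,ha^2+(a+1)d)$ (where exactly this reindexing appears in the computation of $\sum N_{dr}^2$), and the paper gives no argument for the collapse to the four rational terms. Your Stage~1 matches the paper's derivation of the double sum, and your Stage~2 supplies the missing verification via finite geometric series and the exponent identities $P-Q=ad$, $R+d=P$, $aQ+a^2d=aP$, which is precisely the intended route.
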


\begin{prop}
Let $A=(a, ha+(K+1)d, ha+(K+2)d, ..., ha+kd)$,  where $\gcd(a, d)=1$,  $a, h, d, K, k\in \mathbb{P}$, $K\leq \frac{k-1}{2}$ and $a\geq 2$. Let $a+K=qk+r_1$, $q\geq 0$ and $0\leq r_1\leq k-1$. Then
$$\begin{aligned}
f(x)&=\frac{x^{ha+d(K+1)}(1-x^{d(k-K)})}{1-x^d}+\frac{x^{2ha+d(k+1)}(1-x^{(ha+dk)(q-1)})(1-x^{dk})}{(1-x^{ha+dk})(1-x^d)}
\\&+\frac{x^{(q+1)ha+d(qk+1)}(1-x^{dr_1})}{1-x^d}
-\left\{
\begin{aligned}
&x^{qha+da}\ \ & \text{if} &\ \  0\leq r_1\leq K, \\
&x^{(q+1)ha+da}\ \ & \text{if} &\ \  K+1\leq r_1\leq k.\\
\end{aligned}
\right.
\end{aligned}$$
\end{prop}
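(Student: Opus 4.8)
The plan is to substitute the explicit piecewise formula for $N_{dr}$ recorded above directly into $f(x)=\sum_{r=0}^{a-1}x^{N_{dr}}$ (replacing $N_r$ by $N_{dr}$ is legitimate by Proposition~\ref{0201}, since the two multisets of exponents coincide) and then to evaluate the resulting exponential sum by collecting the terms on which the ceiling $\lceil\,\cdot\,/k\rceil$ is constant. For this sequence one has $N_{dr}=ha\lceil (a+r)/k\rceil+d(a+r)$ for $1\le r\le K$ and $N_{dr}=ha\lceil r/k\rceil+dr$ for $K+1\le r\le a-1$; moreover $N_{d\cdot0}=0=ha\lceil0/k\rceil+d\cdot0$, so the second expression is valid at $r=0$ as well.

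The first step is to unify the two ranges through the substitution $\rho=a+r$ on the block $1\le r\le K$, so that in every case the summand becomes $x^{ha\lceil\rho/k\rceil+d\rho}$. The range $K+1\le r\le a-1$ then contributes $\rho\in\{K+1,\dots,a-1\}$, the range $1\le r\le K$ contributes $\rho\in\{a+1,\dots,a+K\}$, and $r=0$ contributes $\rho=0$; the index multiset is therefore $\{0\}\cup\big(\{K+1,\dots,a+K\}\setminus\{a\}\big)$, giving
\[
f(x)=1+\sum_{\rho=K+1}^{a+K}x^{ha\lceil\rho/k\rceil+d\rho}-x^{ha\lceil a/k\rceil+da}.
\]

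The core computation is the evaluation of $\sum_{\rho=K+1}^{a+K}$ grouped by the blocks $\{(i-1)k+1,\dots,ik\}$ on which $\lceil\rho/k\rceil=i$. Writing $a+K=qk+r_1$ with $0\le r_1\le k-1$, the sum splits into the partial first block $i=1$ ($\rho$ from $K+1$ to $k$), the full intermediate blocks $i=2,\dots,q$, and the partial last block $i=q+1$ ($\rho$ from $qk+1$ to $qk+r_1=a+K$). On each block the inner sum over $\rho$ is geometric in $x^d$, and the sum over the full blocks is an outer geometric series in $x^{ha+dk}$; carrying out these summations produces exactly the first, second and third rational functions in the statement, the middle one being the product of the inner factor $\tfrac{1-x^{dk}}{1-x^d}$, the outer factor $\tfrac{1-x^{(ha+dk)(q-1)}}{1-x^{ha+dk}}$, and the prefactor $x^{2ha+d(k+1)}$. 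It then remains to resolve the excised term: from $a=qk+(r_1-K)$ one gets $\lceil a/k\rceil=q$ when $0\le r_1\le K$ and $\lceil a/k\rceil=q+1$ when $K+1\le r_1\le k-1$, which is precisely the two-case correction $-x^{qha+da}$ versus $-x^{(q+1)ha+da}$.

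The main obstacle is the boundary bookkeeping. The shift by $a$ on the first range is what turns the combined index set into a full integer interval with the single point $\rho=a$ deleted, and one must verify that the endpoints $K+1$ and $a+K$ really lie in the first and last blocks as claimed (this uses $K\le\tfrac{k-1}{2}<k$ and $0\le r_1\le k-1$) and that the geometric closed forms reproduce the exact exponents $ha+d(K+1)$, $2ha+d(k+1)$ and $(q+1)ha+d(qk+1)$, together with the degenerate cases $q=1$ (no intermediate blocks) and $r_1=0$ (empty last block). Finally, the term $r=0$ contributes only the constant $x^{N_0}=1$ to $f(x)$; since $N_0=0$ this alters neither $\deg f$ nor the derivatives $f'(1),f''(1)$, and hence is immaterial to the resulting $g(A),n(A),s(A)$.
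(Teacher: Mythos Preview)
Your approach is correct and is exactly what the paper intends: the proposition appears in the appendix without proof, but it is understood to be assembled from the explicit formula
\[
N_{dr}=\begin{cases} ha\lceil (a+r)/k\rceil+d(a+r),& 1\le r\le K,\\ ha\lceil r/k\rceil+dr,& K+1\le r\le a-1,\end{cases}
\]
derived in the proof of the corresponding theorem. Your substitution $\rho=a+r$ on the first range, which turns the index set into $\{0\}\cup(\{K+1,\dots,a+K\}\setminus\{a\})$ with a uniform summand $x^{ha\lceil\rho/k\rceil+d\rho}$, is a clean way to carry this out, and the block decomposition with the two geometric summations is precisely the intended computation.

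You have also correctly spotted a slip in the stated formula: the right-hand side is missing the additive constant $1$ coming from $x^{N_0}=1$. One sees this instantly by evaluating at $x=1$: the three rational pieces contribute $(k-K)+k(q-1)+r_1=qk+r_1-K=a$, and after subtracting the single correction term one gets $a-1$, whereas $f(1)=\sum_{r=0}^{a-1}1=a$ by definition (compare with the other propositions in the appendix and with the worked arithmetic example in Section~5, all of which satisfy $f(1)=a$). Your observation that this constant does not affect $g(A)$, $n(A)$, $s(A)$ is of course correct.
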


\begin{prop}
Let $A=(a, ha+d, ha+2d, ha+4d, ..., ha+2kd)$,  where $(a, d)=1$,  $a, h, d, k\in \mathbb{P}$, $a>2$, $d>h$, $1\leq 2k\leq a-1$. Suppose $a-1=2k\cdot s+t, 1\leq t\leq 2k$. Then
\begin{align*}
f(x)&=1+\frac{x^{ha+d}(1-x^{(ha+2kd)(s+1)})}{1-x^{ha+2kd}}
+\frac{x^{ha+2d}(1-x^{(ha+2kd)s})(1-x^{2dk})}{(1-x^{ha+2kd})(1-x^{2d})}
\\&+\frac{x^{2ha+3d}(1-x^{(ha+2kd)s})(1-x^{2d(k-1)})}{(1-x^{ha+2kd})(1-x^{2d})}+f_1(x),
\end{align*}
where
$$\begin{aligned}
f_1(x)=
\left\{
\begin{aligned}
\frac{x^{ha(s+1)+d(2ks+2)}(1-x^{d(t-1)})}{1-x^{2d}}
+\frac{x^{ha(s+2)+d(2ks+3)}(1-x^{d(t-1)})}{1-x^{2d}}\ \ & \text{if} & t\ \text{is odd}, \\
\frac{x^{ha(s+1)+d(2ks+2)}(1-x^{dt})}{1-x^{2d}}
+\frac{x^{ha(s+2)+d(2ks+3)}(1-x^{d(t-2)})}{1-x^{2d}}\ \ & \text{if} & t\ \text{is even}. \\
\end{aligned}
\right.
\end{aligned}$$
\end{prop}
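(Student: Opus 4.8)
The plan is to read off the stated rational-function form of $f(x)=\sum_{r=0}^{a-1}x^{N_{dr}}$ directly from the explicit piecewise description of $N_{dr}$ established in the proof of Theorem \ref{12468ss2}, by reorganizing the sum into geometric progressions. Recall from that proof that, writing each $r$ with $1\le r\le a-1$ as $r=2k\hat s+\hat t$ with $1\le \hat t\le 2k$ (so that $\hat s=\lfloor\frac{r-1}{2k}\rfloor$), one has
\begin{align*}
N_{dr}=\hat s(ha+2kd)+c(\hat t),
\end{align*}
where $c(1)=ha+d$, while $c(\hat t)=ha+\hat t d$ for even $\hat t$ and $c(\hat t)=2ha+\hat t d$ for odd $\hat t\ge 3$. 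The crucial feature is that the exponent is \emph{linear} in $\hat s$ with common slope $ha+2kd$, so summing $x^{N_{dr}}$ over a fixed residue $\hat t$ and over a run of consecutive $\hat s$ produces a geometric series with ratio $x^{ha+2kd}$. Here I must carefully distinguish the global $t$ in the statement, defined by $a-1=2ks+t$, from the running residue $\hat t$ that governs the case split in $N_{dr}$.

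First I would isolate the term $r=0$, giving $N_{d\cdot0}=0$ and hence the summand $1$. For the remaining $r$ I would split the index set $\{1,\dots,a-1\}$ into the $s$ complete blocks $\hat s=0,1,\dots,s-1$ (each containing every residue $\hat t=1,\dots,2k$) together with the final incomplete block $\hat s=s$ (containing only $\hat t=1,\dots,t$). Summing over the complete blocks yields the factor $\frac{1-x^{(ha+2kd)s}}{1-x^{ha+2kd}}$ from the geometric series in $\hat s$, multiplied by $\sum_{\hat t}x^{c(\hat t)}$. I would then group residues by parity: the even residues $\hat t=2,4,\dots,2k$ give $x^{ha+2d}\frac{1-x^{2kd}}{1-x^{2d}}$, and the odd residues $\hat t=3,5,\dots,2k-1$ give $x^{2ha+3d}\frac{1-x^{2(k-1)d}}{1-x^{2d}}$; these produce the third and fourth displayed terms. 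The residue $\hat t=1$ I would treat separately, absorbing its $\hat s=s$ occurrence by letting $\hat s$ run from $0$ to $s$, which gives the second term $x^{ha+d}\frac{1-x^{(ha+2kd)(s+1)}}{1-x^{ha+2kd}}$.

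It remains to account for the final block $\hat s=s$ with residues $\hat t=2,\dots,t$ (the residue $\hat t=1$ having already been included). Here the geometric series in $\hat s$ collapses to the single power $x^{s(ha+2kd)}$, and I would again split by parity of $\hat t$, now with upper limit $t$ rather than $2k$. The numbers of even and of odd residues not exceeding $t$ depend on the parity of $t$, which is precisely what produces the two-case definition of $f_1(x)$. Tracking the starting exponents $ha(s+1)+d(2ks+2)$ for the even residues and $ha(s+2)+d(2ks+3)$ for the odd residues $\ge 3$, together with the appropriate geometric lengths $1-x^{dt}$, $1-x^{d(t-1)}$, $1-x^{d(t-2)}$, then reproduces the stated $f_1(x)$ in both parity cases.

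The computation is essentially bookkeeping, so the main obstacle is the careful control of the partial last block and the parity split of the residues: fixing the geometric-series bounds and the starting exponents correctly, and keeping consistent the off-by-one between the complete blocks up to $\hat s=s-1$ and the extension of the $\hat t=1$ series to $\hat s=s$. Once these are handled uniformly, collecting the four geometric sums reproduces the claimed formula term by term.
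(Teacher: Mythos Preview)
Your proposal is correct and follows exactly the method the paper intends: the appendix propositions are stated without proof, and the paper's worked Example in Section~5.2 (for the arithmetic case) shows precisely this procedure of inserting the explicit $N_{dr}$ formula from the corresponding theorem and collapsing the resulting double sum into geometric progressions. Your block decomposition, the parity split of the residues $\hat t$, and the extension of the $\hat t=1$ series to $\hat s=s$ are all the right moves, and your computed exponents and geometric lengths match the stated $f(x)$ and $f_1(x)$ in both parity cases.
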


\begin{prop}
Let $A=(a, ha+d, ha+3d, ha+5d, ..., ha+(2k+1)d)$,  where $(a, d)=1$,  $a, h, d, k\in \mathbb{P}$, $a>2$, $d>h$, $1\leq 2k+1\leq a-1$. Suppose $a-1=(2k+1)s+t$, $1\leq t\leq 2k+1$. We have
\begin{align*}
f(x)&=1+\frac{x^{ha+d}(1-x^{(ha+(2k+1)d)s})(1-x^{2d(k+1)})}{(1-x^{ha+(2k+1)d})(1-x^{2d})}
+\frac{x^{2ha+2d}(1-x^{(ha+(2k+1)d)s})(1-x^{2dk})}{(1-x^{ha+(2k+1)d})(1-x^{2d})}+f_1(x),
\end{align*}
where
$$\begin{aligned}
f_1(x)=
\left\{
\begin{aligned}
\frac{x^{ha(s+1)+d((2k+1)s+1)}(1-x^{d(t+1)})}{1-x^{2d}}
+\frac{x^{ha(s+2)+d((2k+1)s+2)}(1-x^{d(t-1)})}{1-x^{2d}}\ \ & \text{if} & t\ \text{is odd}, \\
\frac{x^{ha(s+1)+d((2k+1)s+1)}(1-x^{dt})}{1-x^{2d}}
+\frac{x^{ha(s+2)+d((2k+1)s+2)}(1-x^{dt})}{1-x^{2d}}\ \ & \text{if} & t\ \text{is even}.\\
\end{aligned}
\right.
\end{aligned}$$
\end{prop}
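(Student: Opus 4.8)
The plan is to read off the answer from the explicit description of $N_{dr}$ already established in Theorem~\ref{13579ss1}, and then to sum a handful of geometric series. First I would use $\gcd(a,d)=1$ together with Proposition~\ref{0201} to replace $N_r$ by $N_{dr}$: since $\{N_0,\dots,N_{a-1}\}=\{N_{d\cdot0},\dots,N_{d\cdot(a-1)}\}$ as multisets, we have $f(x)=\sum_{r=0}^{a-1}x^{N_r}=\sum_{r=0}^{a-1}x^{N_{dr}}$. The index $r=0$ gives $N_0=0$ and hence the isolated summand $x^0=1$ appearing at the front of the claimed formula, so the task reduces to expanding $\sum_{r=1}^{a-1}x^{N_{dr}}$.

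For $1\le r\le a-1$ I would write $r=(2k+1)\sigma+\tau$ with $\sigma\ge0$ and $1\le\tau\le 2k+1$. The proof of Theorem~\ref{13579ss1} shows $N_{dr}=N_{dr}(0)=ha\cdot O_B(r)+dr$, with $O_B(r)=\sigma+1$ when $\tau$ is odd and $O_B(r)=\sigma+2$ when $\tau$ is even. Substituting $dr=d(2k+1)\sigma+d\tau$ and setting $Q:=ha+(2k+1)d$ gives the clean form
\begin{equation*}
N_{dr}=\begin{cases}\sigma Q+ha+d\tau,&\tau\ \text{odd},\\ \sigma Q+2ha+d\tau,&\tau\ \text{even}.\end{cases}
\end{equation*}
Thus each $r$ in the ``block'' $\sigma$ contributes $x^{\sigma Q}$ times $x^{ha+d\tau}$ (odd $\tau$) or $x^{2ha+d\tau}$ (even $\tau$), and the whole sum factors neatly once we group by $\sigma$ and split each block by the parity of $\tau$.

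Next I would separate the complete blocks $\sigma=0,1,\dots,s-1$, each running over the full residue set $\tau\in\{1,\dots,2k+1\}$, from the single partial block $\sigma=s$ running over $\tau\in\{1,\dots,t\}$, where $a-1=(2k+1)s+t$. For the complete blocks the $\sigma$-sum is the geometric series $\sum_{\sigma=0}^{s-1}x^{\sigma Q}=\frac{1-x^{sQ}}{1-x^{Q}}$; the odd-$\tau$ sum equals $x^{ha+d}\frac{1-x^{2d(k+1)}}{1-x^{2d}}$ and the even-$\tau$ sum equals $x^{2ha+2d}\frac{1-x^{2dk}}{1-x^{2d}}$. Multiplying these together reproduces exactly the two Elliott fractions displayed in the statement. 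For the partial block $\sigma=s$ the inner sum stops at $\tau=t$, so the odd and even geometric series are shorter; carrying them out and distinguishing whether $t$ is odd or even yields the two cases of $f_1(x)$, the base exponents $ha(s+1)+d((2k+1)s+1)$ and $ha(s+2)+d((2k+1)s+2)$ coming from $\tau=1$ and $\tau=2$ respectively, and the numerators $1-x^{d(t+1)}$, $1-x^{d(t-1)}$, $1-x^{dt}$ arising from the number of odd and even residues not exceeding $t$.

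The only delicate point, and hence the main obstacle, is the bookkeeping in the partial block: one must track precisely how many odd and how many even values of $\tau$ lie in $\{1,\dots,t\}$ in each parity case and verify that the truncated geometric series produce the stated numerators. Everything else is a routine assembly of geometric sums, and no idea beyond the explicit $N_{dr}$ of Theorem~\ref{13579ss1} is required.
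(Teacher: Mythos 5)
Your proposal is correct and matches the paper's (implicit) derivation: the appendix proposition is exactly the generating function obtained from the piecewise formula $N_{dr}=ha(\lfloor\frac{r-1}{2k+1}\rfloor+1)+dr$ ($\tau$ odd) and $N_{dr}=ha(\lfloor\frac{r-1}{2k+1}\rfloor+2)+dr$ ($\tau$ even) established in the proof of Theorem~\ref{13579ss1}, summed by grouping $r=(2k+1)\sigma+\tau$ into $s$ complete blocks plus one partial block and splitting by the parity of $\tau$. Your bookkeeping in the partial block (the counts $\lceil t/2\rceil$ odd and $\lfloor t/2\rfloor$ even residues, yielding the numerators $1-x^{d(t+1)}$, $1-x^{d(t-1)}$ for odd $t$ and $1-x^{dt}$ for even $t$) checks out exactly.
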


\end{document}